\newtheorem{theorem}{Theorem}[section]
\newtheorem{lemma}{Lemma}[section]
\newtheorem{proposition}{Proposition}[section]
\newtheorem{corollary}{Corollary}[section]
\newtheorem{remark}{Remark}[section]
\newtheorem{definition}{Definition}[section]
\numberwithin{equation}{section}
      \newcommand{\R}{{\mathbb{R}}}
      \newcommand{\curl}{\operatorname{curl}}
      \newcommand{\dive}{\operatorname{div}}
      \newcommand{\N}{\mathbb{N}}
      \newcommand{\ext}{\operatorname{ext}}
      \newcommand{\inte}{\operatorname{int}}
      \newcommand{\loc}{\operatorname{loc}}
      \newcommand{\eps}{\varepsilon}
      \newcommand{\mR}{\mathbb{R}}
      \newcommand{\EE}{\mathcal{E}}  
      \newcommand{\HH}{\mathcal{H}}
       \newcommand{\hEE}{\hat{\mathcal{E}}}  
      \newcommand{\hHH}{\hat{\mathcal{H}}}
      \newcommand{\EEE}{\mathbb{E}}  
      \newcommand{\HHH}{\mathbb{H}}
      \newcommand{\JJJ}{\mathbb{J}}
      \newcommand{\supp}{\operatorname{supp}}
      \newcommand{\dsp}{\displaystyle}
      \newcommand{\bE}{\mathbf E}
\newcommand{\bH}{\mathbf H}
      \newcommand{\tbE}{\widetilde{\mathbf E}}
\newcommand{\tbH}{\widetilde{\mathbf H}}
      \def\@setcopyright{}
      \def\serieslogo@{}
\begin{document}

% First we specify the top matter (author, title, etc).
%
% Note: All of the top matter items are optional and can be omitted.
% But you will probably want to specify at least the author and title
% and perhaps an abstract.

   % author information

   % first author 

   \author{}
   \address{}
   \email{}

   % second author

   \author{}

   % the address where the research was carried out
   \address{}

   % current address, usually not needed because it is the same as the
   % regular address
   \curraddr{}

   \email{}
   
   % title

   \title[Cloaking for Maxwell's equations]{Approximate cloaking for time-dependent Maxwell equations via transformation optics}

      \author[H.-M. Nguyen]{Hoai-Minh Nguyen}
\author[L. Tran]{Loc Tran}

\address[H.-M. Nguyen]{Department of Mathematics, EPFL SB CAMA, Station 8,  \newline\indent
	 CH-1015 Lausanne, Switzerland.}
\email{hoai-minh.nguyen@epfl.ch}

\address[L. Tran]{Department of Mathematics, 
	EPFL SB CAMA Station 8, 
	\newline\indent CH-1015 Lausanne, Switzerland.}
\email{loc.tran@epfl.ch}
   
\begin{abstract} We study approximate cloaking using transformation optics for electromagnetic waves in the time domain. Our approach is based on estimates of the degree of visibility in the frequency domain for all frequencies in which the frequency dependence is explicit. The difficulty and the novelty analysis parts are  in the low and high frequency regimes. To this end, we implement a variational technique in the low frequency domain, and  multiplier and duality techniques in the high frequency domain.  Our approach is inspired by
the work of Nguyen and Vogelius  on the wave equation. 
 
\end{abstract}

   \dedicatory{}

   % today's date, or fill in whatever date you prefer
%   \date{\today}

   \maketitle

%\tableofcontents

% =======================================================================
% =======================================================================
% ======================================================================

\noindent{\bf Key words.} cloaking, transformation optics, Maxwell's equations, radiation condition. 

\medskip 
\noindent{\bf AMS subject classification.}  35A15, 35B40, 35F10, 78A40, 78M30. 

\section{Introduction and  statements of results}
%\noindent Presumption: The readers know what cloaking is.

Cloaking via transformation optics was introduced by Pendry, Schurig, and Smith \cite{Pen} for the Maxwell system and by Leonhardt \cite{Leo} in the geometric optics setting. 
The idea is to use the invariance of Maxwell equations under a change of variables. 
They used a singular change of variables that blows up a point into a
cloaked region. The same transformation was used  by Greenleaf, Lassas,
and Uhlmann in an inverse context \cite{Green}.  However, the singular nature of the cloaks presents various difficulties in practice as well as in theory: (1) they are hard to fabricate and (2) in certain cases, the correct definition (and therefore the properties) of the corresponding electromagnetic fields is an issue. To avoid using
the singular structure, various regularized schemes have been proposed. One of them  was suggested by Kohn, Shen, Vogelius, and Weinstein  \cite{Kohn} in which they  used a  transformation which blows up a small ball of radius $\rho$ instead of a point 
into the cloaked region.  Other, related regularizations schemes have also been proposed \cite{RYNQ,GKLU07}. 
 It is worth mentioning that there are other techniques for cloaking,  some of which  use negative index materials such as cloaking using complementary media, see,  e.g.,    \cite{LaiChenZhangChanComplementary, Ng-Negative-Cloaking}  and cloaking via localized resonance, see,  e.g., \cite{Ng-CALR-O} (see also \cite{NicoroviciMcPhedranMilton94, MiltonNicorovici, Ng-CALR}).

Approximate cloaking  using transformation optics for  the acoustic setting has been investigated in the last fifteen years. In the frequency domain, if an appropriate or a fixed lossy layer (damping layer) is implemented between the transformation cloak and the cloaked region, then cloaking is achieved, and the degree of visibility is of the order $\rho$ in three dimensions and $1/ |\ln \rho|$ in two dimensions, see  \cite{Kohn, HM3} respectively. Without such a lossy layer, the phenomena are more complex and have been investigated in more depth \cite{HM2}. In this setting, there are two distinct situations: resonant and non-resonant. In the non-resonant case, cloaking is achieved with the same degree of visibility; however, the field inside the cloaked region might depend on the field outside (cloaking vs shielding). In the resonant case, the energy inside the cloaked region can blow up, and  cloaking might not be achieved.  Different cloaking aspects related to the Helmholtz equation such as zero frequency context and the enhancement,  have been studied \cite{Kohn, NgV1,  Ammari13-1, GV, HV} and references therein. There are much less rigorous works in the time domain.  Cloaking using transformation optics for the wave equation was established in which  a lossy layer is also used \cite{HM6}, and in which  the dispersion of the transformation cloak using the Drude-Lorentz model is accounted and a fixed lossy layer is used \cite{HM7}, in this direction.   

In the electromagnetic time harmonic context, the situation on one hand shares some common features with the scalar case and on the other hand has some distinct figures, see \cite{HM9}.  In the non-resonant electromagnetic case, without sources inside the cloaked region, it is shown that cloaking is achieved and  the degree of visibility is of the order $\rho^3$. In the resonant electromagnetic case, in contrast to the scalar case, cloaking is always achieved even if the energy inside the cloaked region might blow up.  Moreover,  the degree of visibility varies between the non-resonant and resonant cases. Other works on cloaking for the Maxwell equations in the time harmonic regime can be found in \cite{GKLU07-1, Weder1, Weder2, Ammari13, DLU, Lassas} and references therein.

This paper is devoted to cloaking using transformation optics for the Maxwell equations in the {\it time} domain. We  use the regularization transformation instead of the singular one for the starting point,  which is necessary for viewing previous results in the time harmonic regime. Concerning the analysis,  we first transform the Maxwell equations in the time domain  into a family of the Maxwell equations in the time harmonic regime by taking the Fourier transform of the solutions with respect to time. After obtaining appropriate estimates on the  near invisibility  of the Maxwell equations in the time harmonic regime, we simply invert the Fourier transform. This idea has its roots in the work of Nguyen and Vogelius \cite{HM6} (see also \cite{HM7}) in the  context of acoustic cloaking and was used to study  impedance boundary conditions in the time domain  \cite{HM-VL} and cloaking for the heat equation \cite{Minh-Tu}. To implement this idea, the heart of the matter is to obtain the degree of visibility in which  the dependence on frequency is {\it explicit} and well controlled.  The analysis involves a variational method, a multiplier technique,  and a duality argument in different ranges of frequency. 
An intriguing fact about the Maxwell equations in the time harmonic regime worth mentioned is that the multiplier technique does not fit well  for the purposes of  cloaking in the  very high frequency regime, and a duality argument is involved instead. 
Another key technical point is the proof of the radiating condition for the Fourier transform in time of the weak solutions of the general Maxwell equations, a fact which  is interesting in itself. 
Note that after a change of variables, the study of the cloaking effect can be derived from the study of the effect of a small inclusion which is known when the coefficients inside the small inclusions are fixed (or has a finite range), generally for a fixed frequency, see, e.g., \cite{AVV, VogeliusVolkov}.
Nevertheless, the situation in the  context of cloaking is non-standard since the coefficients inside the small inclusion blow up as the diameter goes to 0.

Let us now describe the problem in more detail. For simplicity, we suppose that the cloaking device occupies
the annular region $B_2 \setminus B_{1/2}$ and the cloaked region is the ball $B_{1/2}$ in $\mR^3$ in which the permittivity and the permeability are given by two $3\times 3$ matrices $\eps_O, \mu_O$, respectively. In this paper, for $r>0$,  we denote $B_r$ as the ball centered at the origin and of radius $r$. Throughout this paper, we assume that, in $B_{1/2}$,  
\begin{equation}\label{m-1}
\eps_O, \,  \mu_O \mbox{ are real, symmetric}, 
\end{equation}
and uniformly elliptic, i.e., 
\begin{equation}\label{m-2}
\frac{1}{\Lambda} |\xi|^2 \le \langle \eps_O(x)\xi, \xi \rangle, \langle \mu_O (x)\xi, \xi\rangle \le \Lambda|\xi|^2 \quad  \forall \,  \xi \in \mR^3,  
\end{equation}
for a.e. $x\in B_{1/2}$ and for some $\Lambda  \ge 1$. We also  assume  $\eps_O, \mu_O$ are piecewise $C^1$  to ensure the uniqueness of solutions via the unique continuation principle (see \cite{TNguyen, BC}, and also \cite{Protter}). 

Let $\rho\in (0, 1)$ and let $F_{\rho}: \R^3 \rightarrow \R^3$ be defined by
\begin{align}F_{\rho} (x)= \left\{ \begin{array}{cl} x &\text{ in } \R^3\backslash B_2, \\[6pt] 
\dsp  \left( \frac{2 - 2\rho}{2-\rho} + \frac{|x|}{2-\rho} \right)\frac{x}{|x|} &\text{ in } B_2\backslash B_{\rho},\\[6pt]
\dsp \frac{x}{\rho} &\text{ in } B_{\rho}.\end{array} \right. \end{align}
The cloaking device in $B_2 \setminus B_{1/2}$ constructed via the transformation optics technique is characterized by the triple of permittivity, permeability, and conductivity and  contains two layers. The first one in $B_2 \setminus B_{1}$ that comes from the transformation technique using the map $F_\rho$ is 
$$
({F_{\rho}}_*I, {F_{\rho}}_*I, 0 )  
$$
and the second one in $B_{1} \setminus B_{1/2}$, which is a fixed lossy layer, is 
$$
(I, I, 1). 
$$
Here and in what follows, for a diffeomorphism $F$ and a matrix-valued function $A$, one denotes
\begin{equation}\label{A*}
F_*A : = \frac{DF A DF^{T} }{|\det DF| }\circ F^{-1}. 
\end{equation}

\begin{remark} \rm Different fixed lossy layer can be used. However, to  simplify  the notations and to avoid several unnecessary technical points, the triple $(I, I, 1)$ is considered. 
\end{remark}

Assume that the medium is homogeneous outside the cloaking device and the cloaked region. In the presence of the cloaked object and the cloaking device, the medium in the whole space $\R^3$ is described by the triple $(\eps_c, \mu_c, \sigma_c)$ given by 
\begin{equation}\label{medium-cloak}
(\eps_c, \mu_c, \sigma_{c}) =  
\left\{\begin{array}{cl}
(I, I , 0) &\mbox{ in } \R^3\setminus B_2,\\[6pt]
({F_{\rho}}_*I, {F_{\rho}}_*I, 0) & \mbox{ in } B_2\setminus B_{1},\\[6pt]
(I , I, 1) & \mbox{ in } B_1\setminus B_{1/2}, \\[6pt]
(\eps_O, \mu_O, 0) & \mbox{ in } B_{1/2}.
\end{array}\right. 
\end{equation}

%\textcolor{red}{Introduce the notation of change of variables here!}

Let $\mathcal J$ represent a charge density. We assume that
\begin{equation}\label{j-1}
\mathcal J\in L^1([0, \infty);[L^2(\R^3)]^3) \mbox{ with } \operatorname{supp} \mathcal J \subset [0, T]\times (B_{R_0} \setminus B_2), \mbox{ for some } T> 0, R_0>2, 
\end{equation}
and 
\begin{equation}\label{j-2}
\dive \mathcal J = 0 \mbox{ in } \mR_+ \times \mR^3. 
\end{equation}
With the cloaking device  and the cloaked object, the electromagnetic wave generated by $\mathcal J$ with zero data at  time $0$ is the unique weak solution $(\EE_c, \HH_c) \in L^{\infty}_{\loc}([0, \infty), [L^2(\R^3)]^6)$ to the system
\begin{equation}
\label{equ:wcloak}
\begin{cases}
\dsp  \eps_c \frac{\partial \EE_c}{\partial t} =  \nabla \times \HH_c -  \mathcal J -  \sigma_c \EE_c &\text{ in } (0, +\infty)\times\mathbb{R}^3,\\[6pt]
\dsp  \mu_c \frac{\partial \HH_c}{\partial t}  = - \nabla \times \EE_c   &\text{ in } (0, +\infty)\times\mathbb{R}^3,\\[6pt]
\EE_c(0, \cdot) = \HH_c(0,\cdot ) = 0 & \text{ in } \R^3.
\end{cases}\end{equation}
In the homogeneous space, the field generated by $\mathcal J$ with zero data at time $0$ is the unique weak solution $(\EE, \HH)\in L^{\infty}_{\loc}([0, \infty), [L^2(\R^3)]^6)$ to the system 
\begin{align}
\label{equ:ncloak}
\begin{cases}
\dsp \frac{\partial \EE}{\partial t}  =  \nabla \times \HH - \mathcal J  &\text{ in } (0, +\infty)\times\mathbb{R}^3,\\[6pt]
\dsp  \frac{ \partial \HH}{\partial t} = - \nabla \times \EE  &\text{ in } (0, +\infty)\times\mathbb{R}^3,\\[6pt]
\EE(0,\cdot ) = \HH(0, \cdot ) = 0 & \text{ in } \R^3.
\end{cases}
\end{align}

The meaning of weak solutions, in a slightly more general context,  is as follows. 

\begin{definition}\label{def} Let  $\eps,  \, \mu,  \,  \in [L^{\infty}(\R^3)]^{3\times 3}$, $\sigma_m, \,  \sigma_e \in L^{\infty}(\R^3)$ be such that $\eps$ and $\mu$ are real,  symmetric, and uniformly elliptic in $\mR^3$, and  $\sigma_m$ and $\sigma_e$ are real and nonnegative  in $\mR^3$, 
and  let $f_e, f_m \in L^1_{\loc}([0, \infty); [L^2(\R^3)]^3)$. A pair  $(\EE, \HH)\in L^{\infty}_{\loc}([0, \infty), [L^2(\R^3)]^6)$ is called a weak solution of 
\begin{equation}\label{equ}
\begin{cases}
\dsp  \eps \frac{\partial \EE}{\partial t} = \nabla\times \HH -  \sigma_e\EE + f_m &\text{ in } (0, +\infty)\times \R^3,\\[6pt]
\dsp\mu \frac{\partial \HH}{\partial t} = -  \nabla\times \EE  -\sigma_m\HH  + f_e&\text{ in } (0, +\infty)\times \R^3,\\[6pt]
\dsp \EE(0, \cdot) = 0; \HH(0, \cdot) = 0 & \text{ in } \R^3, 
\end{cases}
\end{equation}
if  
\begin{equation}\label{def-e1}
\begin{cases}
\dsp \frac{d}{dt}\langle \eps\EE(t, .), E\rangle + \langle \sigma_e \EE(t, .), E\rangle - \langle \HH(t, .), \nabla\times E\rangle = \langle f_m(t, .), E\rangle, \\[8pt]
\dsp \frac{d}{dt}\langle \mu\HH(t, .), H\rangle + \langle \sigma_m \HH(t, .), H\rangle + \langle \EE(t, .), \nabla\times H\rangle = \langle f_e(t, .), H\rangle,
\end{cases} \quad \mbox{ for } t > 0, 
\end{equation}
for all $(E, H)\in [H(\curl, \mR^3)]^2$, and 
\begin{equation}\label{def-e2}
\EE(0, .) = \HH(0, .) = 0 \mbox{ in } \R^3.
\end{equation}
\end{definition}

Some comments on Definition~\ref{def} are in order. System \eqref{def-e1} is understood in the distributional sense. Initial condition   \eqref{def-e2}  is understood as 
\begin{equation}\label{trace-sense}
\langle \eps \EE(0, .), E \rangle  = \langle \mu \HH(0, .), H \rangle = 0  \quad \mbox{ for all } (E, H)\in [H(\curl, \mR^3)]^2. 
\end{equation}
From \eqref{def-e1}, one can check that  
\begin{equation*}
\langle \eps \EE(t, .), E \rangle, \langle \mu \HH(t, .), H \rangle \in W^{1, 1}_{\loc}([0, + \infty)). 
\end{equation*}
This in turn ensures the trace sense  in \eqref{trace-sense}. 

\medskip 
Concerning the well-posedness of \eqref{equ}, we have, see,  e.g., \cite[Theorem 3.1]{HM8}, 
  
\begin{proposition}\label{prop-wellposed} Let $f_e, f_m \in L^1_{\loc}([0, \infty); [L^2(\R^3)]^3)$. There exists a unique weak solution $(\EE, \HH) \in L^{\infty}_{\loc}([0, \infty), [L^2(\R^3)]^6)$ of \eqref{equ}. Moreover, for $T> 0$, the following estimate holds
	\begin{equation}\label{es}
	\int_{\R^3}|\EE (t, x) |^2 + |\HH (t, x) |^2 dx \leq C\left(\int\limits_{0}^t\Big\|\big(f_e(s, .), f_m(s, .)\big)\Big\|_{L^2(\R^3)} ds\right)^2 \quad \mbox{ for } t\in [0,T],
	\end{equation}
	for some positive constant $C$ depending only on the ellipticity of  $\eps$ and $\mu$. 
	
\end{proposition}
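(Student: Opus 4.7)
The plan is to view problem~\eqref{equ} as an abstract evolution equation on the Hilbert space $\mathcal X := [L^2(\R^3)]^6$ equipped with the weighted inner product
$$\bigl\langle (\EE_1, \HH_1),(\EE_2, \HH_2)\bigr\rangle_{\mathcal X} := \int_{\R^3}\eps \EE_1\cdot \EE_2 \, dx + \int_{\R^3}\mu \HH_1\cdot \HH_2\, dx,$$
which is equivalent to the standard $L^2$ inner product by \eqref{m-2}. Introduce the unbounded operator $\mathcal A$ on $\mathcal X$ with domain $D(\mathcal A)=[H(\curl,\R^3)]^2$ defined by
$$\mathcal A(\EE,\HH) := \bigl(-\eps^{-1}(\nabla\times\HH - \sigma_e \EE),\; \mu^{-1}(\nabla\times \EE + \sigma_m \HH)\bigr),$$
so that \eqref{equ} reads $U'+\mathcal A U = G$, $U(0)=0$, with $U=(\EE,\HH)$ and $G=(\eps^{-1}f_m,\mu^{-1}f_e)$. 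I would then obtain existence and uniqueness by verifying that $\mathcal A$ is maximal monotone on $\mathcal X$ and invoking the Hille--Yosida / Lumer--Phillips theorem.

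Monotonicity follows from the identity $\int_{\R^3}\HH\cdot\nabla\times\EE = \int_{\R^3}\EE\cdot\nabla\times\HH$ for $(\EE,\HH)\in [H(\curl,\R^3)]^2$, valid by density from $C_c^\infty(\R^3)$ since $\R^3$ has no boundary; it yields
$$\langle \mathcal A U, U\rangle_{\mathcal X} = \int_{\R^3}\bigl(\sigma_e|\EE|^2 + \sigma_m|\HH|^2\bigr)\,dx \ge 0.$$
For maximality, given $V=(V_1,V_2)\in \mathcal X$ and $\lambda>0$, I would eliminate $\HH$ from the resolvent equation $(\lambda I+\mathcal A)U = V$ via $\HH = (\lambda\mu+\sigma_m)^{-1}(\mu V_2-\nabla\times\EE)$ and reduce the problem to: find $\EE\in H(\curl,\R^3)$ such that
$$\int_{\R^3}(\lambda\mu+\sigma_m)^{-1}(\nabla\times\EE)\cdot(\nabla\times E)\,dx + \int_{\R^3}(\lambda\eps+\sigma_e)\EE\cdot E\,dx = \ell(E)$$
for every $E\in H(\curl,\R^3)$, with $\ell$ a continuous functional built from $V$. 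The bilinear form on the left is coercive on $H(\curl,\R^3)$ thanks to the mass term $\lambda \eps$, so Lax--Milgram yields a unique $\EE$, and $\HH$ is recovered algebraically.

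Once $\mathcal A$ is maximal monotone, Hille--Yosida provides, for every $G\in L^1_{\loc}([0,\infty);\mathcal X)$, a unique mild solution $U\in C([0,\infty);\mathcal X)$, which is easily seen to satisfy Definition~\ref{def} by testing against time-independent $(E,H)\in [H(\curl,\R^3)]^2$; conversely, any weak solution in the sense of Definition~\ref{def} coincides with this mild solution. The energy estimate \eqref{es} follows by the standard multiplier argument: formally take $(E,H)=(\EE(t,\cdot),\HH(t,\cdot))$ in \eqref{def-e1}, add the two identities, and use the symmetry of $\eps,\mu$ together with $\int \HH\cdot\nabla\times\EE=\int\EE\cdot\nabla\times\HH$ to produce
$$\frac{1}{2}\frac{d}{dt}\|U(t)\|_{\mathcal X}^2 + \int_{\R^3}\bigl(\sigma_e|\EE|^2+\sigma_m|\HH|^2\bigr)\,dx = \int_{\R^3}\bigl(f_m\cdot\EE + f_e\cdot\HH\bigr)\,dx.$$
Dropping the nonnegative dissipation, applying Cauchy--Schwarz together with norm equivalence, and integrating yields $\|U(t)\|_{\mathcal X}\le C\int_0^t\|(f_e(s,\cdot),f_m(s,\cdot))\|_{L^2} \,ds$, i.e.\ \eqref{es}. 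This formal manipulation is justified by first treating classical data $G\in C^1([0,\infty);\mathcal X)$ (for which the semigroup solution lies in $D(\mathcal A)$ and the chain rule is legal) and then passing to the limit by continuous dependence in $L^1_t L^2_x$.

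The main obstacle I anticipate is the range condition, namely surjectivity of $\lambda I + \mathcal A$: on the unbounded domain $\R^3$ the space $H(\curl,\R^3)$ does not embed compactly into $L^2(\R^3)$ and $\nabla\times\nabla\times$ has an infinite-dimensional kernel (all gradients), so Fredholm and compactness techniques are not available. Keeping $\lambda>0$ is crucial: the resulting mass term $\lambda\eps + \sigma_e$ together with the uniformly positive weight $(\lambda\mu+\sigma_m)^{-1}$ in front of $\nabla\times$ render the bilinear form coercive on the full space $H(\curl,\R^3)$, bypassing this difficulty and making Lax--Milgram directly applicable.
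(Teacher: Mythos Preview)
Your semigroup approach via maximal monotonicity and Hille--Yosida is correct and standard. The paper itself does not prove this proposition: it is stated with a reference to \cite[Theorem 3.1]{HM8}, so there is no in-paper argument to compare against, and your outline is precisely the kind of proof that citation points to. The one place worth tightening is the identification of mild solutions with weak solutions in the sense of Definition~\ref{def}, in particular uniqueness within the class $L^\infty_{\loc}([0,\infty);[L^2(\R^3)]^6)$ where one cannot directly insert the solution as its own test function; this is handled by Ball's characterization of weak solutions of abstract Cauchy problems or by a time-mollification argument, and you rightly flag it as a routine regularization step.
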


\begin{remark} \rm We emphasize here that the constant $C$ in Proposition~\ref{prop-wellposed} is independent of $T$. This fact is later used in the proof of  the radiating condition.  In \cite{HM8}, the authors considered dispersive materials and also dealt with Maxwell equations which are non-local in time. 
\end{remark}

We are ready to state the main result of the paper that is proved in Section~\ref{sc-proof-main}.
\begin{theorem}\label{thm-main-1}
	Let $\rho \in (0, 1)$, $T> 0$ and  let $(\EE_c, \HH_c), (\EE, \HH)\in L_{\loc}^{\infty}([0, \infty), [L^2(\R^3)]^6)$ be the unique solutions to systems \eqref{equ:wcloak} and \eqref{equ:ncloak}, respectively. Assume \eqref{j-1} and \eqref{j-2}. Then, for $K\subset\subset \R^3\backslash \bar B_1$, 
	\begin{equation}\label{main-1}
	\| (\EE_c, \HH_c) - (\EE, \HH)\|_{L^{\infty}((0, T); L^2(K))}\leq C T \rho^3\|\mathcal J\|_{H^{11}((0, \infty); [L^2(\R^3)]^3)},
	\end{equation}	
for some positive constant $C$ depending only on $K$, $R_0$.
\end{theorem}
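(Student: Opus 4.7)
I would follow the Fourier-in-time approach pioneered by Nguyen--Vogelius in the acoustic setting: transform the time-dependent problem into a family of time-harmonic Maxwell problems parametrized by the frequency $\omega\in\R$, obtain cloaking estimates in the frequency domain with \emph{explicit} polynomial control in $|\omega|$, and then invert the Fourier transform, using the regularity of $\mathcal J$ in time to absorb that polynomial growth. Concretely, set $(U_\rho,V_\rho):=(\EE_c-\EE,\HH_c-\HH)$ and extend by $0$ to $t<0$; Proposition~\ref{prop-wellposed} together with the compact temporal support of $\mathcal J$ gives uniform-in-$t$ bounds in $L^2(\R^3)$, so that the Fourier transforms in $t$ make sense as tempered distributions and, at each $\omega$, $(\hat\EE_c,\hat\HH_c)$ and $(\hat\EE,\hat\HH)$ satisfy time-harmonic Maxwell systems with source $\hat{\mathcal J}(\omega,\cdot)$ and coefficients $(\eps_c,\mu_c,\sigma_c)$ and $(I,I,0)$, respectively. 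A preliminary auxiliary step, of independent interest, is to verify that these Fourier transforms satisfy the outgoing Silver--M\"uller radiation condition; this follows from the causality of the time-domain problem and the uniform-in-$T$ character of the constant in~\eqref{es}.

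\textbf{Frequency-explicit cloaking estimate.} The central input is an estimate of the form
\begin{equation*}
\|(\hat U_\rho,\hat V_\rho)(\omega,\cdot)\|_{L^2(K)}\le C\,\rho^3\,p(\omega)\,\|\hat{\mathcal J}(\omega,\cdot)\|_{L^2(\R^3)},\qquad\omega\in\R,
\end{equation*}
for a fixed polynomial $p$ in $|\omega|$. I would split the $\omega$-axis into three regimes. For low frequencies $|\omega|\le\omega_0$ the problem is quasi-static: after pulling back by $F_\rho$, the cloak behaves like a small inhomogeneity of diameter $O(\rho)$, and a variational argument on the difference yields the $\rho^3$ gain; coercivity as $\omega\to 0$ is ensured by the lossy layer in $B_1\setminus B_{1/2}$, and no singularity arises at $\omega=0$ thanks to~\eqref{j-2}. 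For bounded frequencies a Morawetz-type multiplier $(x-x_0)\cdot\nabla$ applied to a suitably gauged field delivers the estimate with a polynomial loss in $\omega$. For very high frequencies, the direct multiplier method does not close for the Maxwell system; instead a duality argument is used, in which, given test data $g\in L^2(K)$, one solves a time-harmonic adjoint Maxwell problem, bounds the adjoint solution by multiplier estimates, and then pairs it with $(\hat U_\rho,\hat V_\rho)$.

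\textbf{Inversion and main obstacle.} With the previous bound in hand, write
\begin{equation*}
(U_\rho,V_\rho)(t,x)=\frac{1}{2\pi}\int_\R e^{i\omega t}(\hat U_\rho,\hat V_\rho)(\omega,x)\,d\omega,
\end{equation*}
bound the $L^\infty([0,T];L^2(K))$-norm by the $L^1(\R;L^2(K))$-norm of the integrand, and apply Cauchy--Schwarz with weight $(1+\omega^2)^{-s/2}$ (for some $s>1/2$) together with Plancherel to obtain a bound by $C\rho^3\|\mathcal J\|_{H^{s+\deg p}}$. The choice $s+\deg p=11$ accounts for the exponent in~\eqref{main-1}, while the linear factor $T$ is produced by trading an inverse power of $\omega$ near the origin against the factor $t$ via $|e^{i\omega t}-1|\le|\omega|t$. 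The hardest step is the high-frequency cloaking estimate: the curl--curl structure of Maxwell precludes a direct Morawetz argument, forcing the duality detour, and one must preserve the $\rho^3$-gain while carefully tracking the polynomial degree in $\omega$ through two nested multiplier estimates---one for the direct problem and one for the adjoint---which is ultimately what fixes the Sobolev index $11$ in the statement.
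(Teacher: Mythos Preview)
Your proposal is correct and follows essentially the same architecture as the paper: Fourier in time, verification of the Silver--M\"uller condition for the transforms, a three-regime frequency estimate (variational at low $\omega$, Morawetz multiplier in the intermediate range, duality at high $\omega$), and inversion with the factor $T$ coming from the extra $|\omega|$ one gains by exploiting the vanishing initial data.

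Two small calibrations are worth making. First, the low-frequency estimate in the paper (Proposition~\ref{pro-low}) carries a genuine $\omega^{-1}$ factor; the condition $\dive\mathcal J=0$ does not remove it. What kills it is precisely the extra $\omega$ you obtain from $|e^{i\omega t}-1|\le|\omega|t$ (equivalently, the paper writes $\EE_c(t)-\EE(t)=\int_0^t\partial_s(\EE_c-\EE)\,ds$ and bounds $\|\partial_s(\cdot)\|$ by $\int_0^\infty\omega\|\widehat{(\cdot)}\|\,d\omega$, which is the same trick). Second, the intermediate regime is not ``bounded frequencies'' but $\omega_0<\omega<\omega_1/\rho$, and the duality used for $\omega>\omega_1/\rho$ is not an adjoint-problem pairing but a Lions--Magenes type trace estimate: one tests $\Delta E+\omega^2E=\cdots$ against an $H^2$ function with prescribed Cauchy data on the boundary to control $\|E\|_{H^{-1/2}(\partial D)}$ and $\|H\times\nu\|_{H^{-3/2}(\partial D)}$ (Lemma~\ref{lem-es}), then feeds these negative-norm traces into Stratton--Chu to recover the $\rho^3$ far-field decay. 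Your adjoint-problem variant could also be made to work, but would not be simpler.
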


\begin{remark} \rm Assertion \eqref{main-1} is optimal since it gives the same degree of visibility as in the  frequency domain in \cite{HM9} where  the optimality is established. 
\end{remark}

\begin{remark} \rm Estimate \eqref{main-1} requires that  $\mathcal J$ is regular. The condition of the  regularity of $\mathcal J$ is not optimal,  and this optimality would be studied elsewhere. 
\end{remark}

Our approach is inspired  by the  work of  Nguyen and Vogelius  \cite{HM6} (see also \cite{HM7, HM-VL}), where they studied approximate cloaking for the acoustic setting in the time domain. The main idea can be briefly described  as follows. 
We first transform the time-dependent Maxwell systems into a family of the time-harmonic Maxwell systems by taking the Fourier transform of the solutions with respect to time. After obtaining the appropriate degree of near invisibility for the Maxwell equations in the time harmonic regime, where the frequency dependence  is {\it explicit}, we simply invert the Fourier transform.
The analysis in the frequency domain $\omega$ (in Section~\ref{sc-freq}) can be divided into three steps that deal with frequencies in 
low and moderate ($0 < \omega < 1$), moderate and high ($1 < \omega < 1/ \rho$), high and very high  ($\omega > 1/\rho$) regimes. The analysis in the low and moderate frequency regime (in Section~\ref{sc-low}) is based on a variational approach. In comparison with \cite{HM9}, one needs to additionally derive an estimate for small frequency in which the frequency dependence  is explicit. In the moderate and high frequency regime, to obtain appropriate estimates, 
we use the multiplier technique and  the test functions are inspired from the scalar case due to Morawetz (see \cite{MorawetzLudwig}). The analysis in the moderate and high frequency regime is given in Section~\ref{sc-av}.   There is a significant difference between the scalar case and the Maxwell vectorial case. It is known in the scalar case that one  can control the normal derivative of a solution to the exterior Helmholtz equation in homogeneous medium by its value on the boundary of a convex, bounded subset of $\mR^3$. However,  in contrast with the scalar case, one cannot either use tangential components of the electromagnetic fields to control the normal component in the same Sobolev norms and conversely.  
This fact can be seen from the explicit solutions outside a unit ball of Maxwell equations (see, e.g., \cite[Theorem 2.50]{Kirsch}). This is the reason for which we cannot use the multiplier technique in the  very high frequency regime and  again reveals the distinct structure of Maxwell equations in the time harmonic regime as compared to the Helmholtz equations.   The analysis in the high and very high frequency regime in Section~\ref{sc-high} is based on the duality method inspired from \cite{Lions}. The proof of Theorem~\ref{thm-main-1} based on the frequency analysis is given in Section~\ref{sc-proof-main}. A key technical point required for  the analysis in the frequency domain  is the establishment of  the radiation condition for the Fourier transform with respect to time of the solutions of Maxwell equations. The rigorous proof on the radiation condition  in a general setting is new to our knowledge and is interesting in itself.

The paper is organized as follows. Section \ref{sc-freq} is devoted to the estimates for Maxwell's equations in frequency domain. Section \ref{sc-proof-main} gives the proof of Theorem \ref{thm-main-1}. The assertion on the  radiation condition is also stated and proved there.

\section{Frequency analysis}\label{sc-freq}

In this section, we provide estimates to assess the degree of visibility in the frequency domain. We first recall some notations. Let $U$ be a smooth open subset of $\mR^3$. We  denote 
\[
 H(\curl, U) := \Big\{\phi \in [L^2(U)]^3: \nabla \times  \phi \in [L^2(U)]^3 \Big\}, 
 \]
\[
 H(\dive, U) := \Big\{\phi \in [L^2(U)]^3: \dive \phi \in L^2(U) \Big\}. 
 \]
We also use the notations $H_{\loc}(\curl, U)$ and $H_{\loc}(\dive, U)$ with the usual convention.

Given $\JJJ \in [L^2(\mR^3)]^3$ with compact support, let $(\EEE, \HHH) \in [H_{\loc}(\curl, \mR^3)]^2$ and $(\EEE_\rho, \HHH_\rho) \in  [H_{\loc}(\curl, \mR^3)]^2$ $(\rho  >0)$ be the corresponding unique radiating solutions of the following systems 
\begin{equation}\label{Sys-F-1}
\begin{cases}
 	\nabla \times \EEE = i \omega \HHH  &\mbox{ in } \R^3,\\[6pt]
 	\nabla\times  \HHH= -i\omega  \EEE  + \JJJ & \mbox{ in } \R^3, 
 	\end{cases}
\end{equation}
and 
\begin{equation}\label{Sys-F-1-rho}
\begin{cases}
 	\nabla\times \EEE_{\rho} = i \omega \mu_{\rho}  \HHH_{\rho} &\mbox{ in } \R^3,\\[6pt]
 	\nabla\times \HHH_{\rho} = - i \omega \eps_{\rho} \EEE_{\rho} +\sigma_{\rho} \EEE_{\rho}+ \JJJ & \mbox{ in } \R^3. 
 	\end{cases}
\end{equation}
Here, for $\rho > 0$,  
\begin{equation}\label{eps-mu-rho}
	(\eps_{\rho}, \mu_{\rho}, \sigma_{\rho}) = 
	\left\{\begin{array}{cl}
	(I, I, 0) &\mbox{ in } \R^3\setminus B_{\rho},\\[6pt]
	(\rho^{-1}I, \rho^{-1}I, \rho^{-1}) &\mbox{ in } B_{\rho}\setminus B_{\rho/2},\\[6pt]
	({F^{-1}_{\rho}}_*\eps_O, {F^{-1}_{\rho}}_*\mu_O, 0) &\mbox{ in } B_{\rho/2}.
	\end{array}\right.
	\end{equation}

Recall that for $\omega> 0$, a solution $(E, H) \in [H_{\loc}(\curl, \R^3\setminus B_R)]^2$, for some $R> 0$, of the Maxwell equations 
\[
\begin{cases}
\nabla \times E = i \omega H  &\text{ in } \mathbb{R}^3\setminus B_R,\\[6pt]
\nabla \times H = -i \omega E  &\text{ in } \mathbb{R}^3\setminus B_R
\end{cases}
\]
is called radiating if it satisfies one of the (Silver-M\"uller) radiation conditions
\begin{equation}\label{SM-condition}
H \times x - |x| E = O(1/|x|) \quad   \mbox{ or } \quad  E\times x + |x| H = O(1/|x|) \mbox{ as } |x| \to + \infty. 
\end{equation}
Here and in what follows, for $\alpha \in \mR$, $O(|x|^\alpha)$ denotes a quantity whose norm is bounded by $C |x|^\alpha$ for some constant $C>0$.

Throughout this section, we assume 
\begin{equation}
\dive \JJJ = 0 \quad \mbox{ and } \quad \supp \JJJ \subset B_{R_0} \setminus B_2, 
\end{equation}
for some $R_0 > 2$. One sees later (in Section~\ref{sc-proof-main}) that if $(\hEE_c, \hat \HH_c)$ and $(\hat \EE, \hat \HH)$ are the corresponding Fourier transform with respect to $t$ of $(\EE_c, \HH_c)$ and $(\EE, \HH)$ in \eqref{equ:wcloak}-\eqref{equ:ncloak}
and if one defines  $(\hEE_{\rho}, \hHH_{\rho}) = (DF_\rho^T \hEE_c, DF_\rho^T \hHH_c) \circ F_\rho$ in $\mR^3$ then $(\hEE, \hHH)$ and $(\hEE_\rho, \hHH_\rho)$ satisfy \eqref{Sys-F-1} and \eqref{Sys-F-1-rho} respectively (for some $\JJJ$). This is the motivation for the introduction of $(\EEE, \HHH) $ and $(\EEE_\rho, \HHH_\rho)$. 

The goal of this section is to derive estimates for $(\EEE_\rho, \HHH_\rho) - (\EEE, \HHH)$ in which the dependence on the frequency $\omega$ and $\rho$ is explicit. More precisely, we establish the following three results. 

\begin{proposition}\label{pro-low}
	Let $0<\rho<\rho_0$ and $0< \omega < \omega_0$.  We have
	\begin{equation}
	\label{thm-aux-low-claim}
	\|(\EEE_{\rho}, \HHH_{\rho}) - (\EEE, \HHH)\|_{L^2(B_{R}\setminus B_{2})} \leq C_R \omega^{-1} \rho^3\|\JJJ\|_{L^2(\R^3)},
	\end{equation}
	for some positive constant $C_R$ depending only on $R_0$, $R$, $\omega_0$, and $\rho_0$.
\end{proposition}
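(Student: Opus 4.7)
The plan is a variational duality argument that exploits the localization of the cloaking coefficients to the ball $B_\rho$. First, eliminating the magnetic fields from \eqref{Sys-F-1} and \eqref{Sys-F-1-rho} gives the curl--curl formulations
\begin{equation*}
\nabla\times(\mu_\rho^{-1}\nabla\times \EEE_\rho) - (\omega^2\eps_\rho + i\omega\sigma_\rho)\EEE_\rho = i\omega\JJJ, \qquad \nabla\times\nabla\times\EEE - \omega^2\EEE = i\omega\JJJ.
\end{equation*}
Subtracting and setting $W:=\EEE_\rho-\EEE$, one obtains a radiating Maxwell-type equation for $W$ whose inhomogeneity, in weak form, is a polarization source supported in $B_\rho$: for every admissible test field $\phi$,
\begin{equation*}
\int \mu_\rho^{-1}\nabla\times W\cdot\overline{\nabla\times\phi} - (\omega^2\eps_\rho+i\omega\sigma_\rho)W\cdot\bar\phi = -\!\int_{B_\rho}\!(\mu_\rho^{-1}-I)\nabla\times\EEE\cdot\overline{\nabla\times\phi} + \!\int_{B_\rho}\!(\omega^2(\eps_\rho-I)+i\omega\sigma_\rho)\EEE\cdot\bar\phi.
\end{equation*}

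Second, I would size the right-hand side. From \eqref{eps-mu-rho} and \eqref{A*}, the coefficients $\mu_\rho^{-1}$, $\eps_\rho$, $\sigma_\rho$ are uniformly bounded on $B_\rho$ (the $\rho^{-1}$-blowup of $\mu_\rho,\eps_\rho$ in $B_\rho\setminus B_{\rho/2}$ is killed by inversion, and the pull-back piece in $B_{\rho/2}$ is uniformly elliptic by \eqref{m-2}). Since $\JJJ$ is supported in $B_{R_0}\setminus B_2$, interior regularity of the source-free radiating Maxwell system inside $B_2$ yields $\|\EEE\|_{C^1(B_1)}\le C\|\JJJ\|_{L^2(\R^3)}$ uniformly in $\omega\in(0,\omega_0]$, and together with $|B_\rho|=O(\rho^3)$ the right-hand side is dominated by $C\rho^3\|\JJJ\|_{L^2(\R^3)}\|\phi\|_{H(\curl,B_\rho)}$.

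Third, to extract the $L^2$-estimate of $W$ on $K=B_R\setminus\overline B_2$, I would run a duality argument: given $\psi\in[L^2(K)]^3$, solve the adjoint radiating Maxwell problem with source $\psi\mathbf 1_K$ for a test $\phi$ and plug it into the identity above. Combining the interior regularity of $\phi$ on $B_\rho$ (source-free for the adjoint) with the low-frequency analysis of the adjoint Maxwell operator will give $\|\phi\|_{H(\curl,B_\rho)}\le C\omega^{-1}\|\psi\|_{L^2(K)}$, the $\omega^{-1}$ reflecting that the curl--curl operator degenerates at $\omega=0$. Pairing yields
\begin{equation*}
\|\EEE_\rho-\EEE\|_{L^2(K)}\le C\omega^{-1}\rho^3\|\JJJ\|_{L^2(\R^3)},
\end{equation*}
and the corresponding bound for $\HHH_\rho-\HHH$ follows from $\HHH_\rho-\HHH = (i\omega)^{-1}\nabla\times(\EEE_\rho-\EEE)$ in $K$ together with exterior elliptic regularity.

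The main obstacle will be the uniform-in-$\rho$ adjoint bound with explicit $\omega^{-1}$ scaling; because Maxwell's equations lack coercivity in $H(\curl,\cdot)$ and the curl--curl operator is singular at $\omega=0$, this step will likely require a Helmholtz decomposition of the adjoint solution to isolate its gradient component, together with a compactness/contradiction argument around the limiting magnetostatic problem (in the spirit of the variational framework of \cite{HM9}).
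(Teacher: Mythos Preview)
There is a concrete gap in the second step. From \eqref{eps-mu-rho} and the push-forward formula \eqref{A*} one has $\eps_\rho=\mu_\rho=\rho^{-1}I$ and $\sigma_\rho=\rho^{-1}$ in $B_\rho\setminus B_{\rho/2}$, and $\eps_\rho(x)=\rho^{-1}\eps_O(x/\rho)$, $\mu_\rho(x)=\rho^{-1}\mu_O(x/\rho)$ in $B_{\rho/2}$. Thus $\mu_\rho^{-1}$ is indeed bounded (in fact $O(\rho)$), but $\eps_\rho$ and $\sigma_\rho$ are \emph{not} uniformly bounded: they blow up like $\rho^{-1}$ throughout $B_\rho$. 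Consequently the polarization term $\int_{B_\rho}(\omega^2(\eps_\rho-I)+i\omega\sigma_\rho)\EEE\cdot\bar\phi$ carries a $\rho^{-1}$, and even with $\|\EEE\|_{L^\infty(B_1)}+\|\phi\|_{L^\infty(B_1)}\le C$ the volume integral produces at best $C\rho^2$, not $C\rho^3$. Your claimed bound ``$C\rho^3\|\JJJ\|\,\|\phi\|_{H(\curl,B_\rho)}$'' on the right-hand side therefore fails, and the duality pairing yields only $\omega^{-1}\rho^2$. A second, related issue: the adjoint field $\phi$ solves the \emph{same} $\rho$-dependent operator, so in $B_\rho$ it is not governed by the free Maxwell system and you cannot invoke ``interior regularity (source-free for the adjoint)'' to get $\rho$-uniform pointwise bounds there; this is precisely where the blow-up of the coefficients makes the problem nonstandard (cf.\ the discussion after \eqref{A*} in the introduction).

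The paper avoids this difficulty by a completely different route. It rescales to the unit inclusion via $(\tbE_\rho,\tbH_\rho)=(\bE_\rho,\bH_\rho)(\rho\,\cdot)$ and the system \eqref{eq-tEH-1}--\eqref{eps-mu-rho-tilde}, then: (i) uses the imaginary part of the energy identity together with the \emph{lossy layer} $B_1\setminus B_{1/2}$ (Lemma~\ref{lem-lossy-low}, via Lemmas~\ref{lem-T-1-***}--\ref{lem:ex-12}) to obtain $\|(\tbE_\rho,\tbH_\rho)\|_{L^2(B_2\setminus B_1)}\le C\omega^{-1}\|(\EEE,\HHH)(\rho\,\cdot)\|_{L^2(\partial B_1)}$; (ii) extracts the $\rho^3$ from a Stratton--Chu far-field expansion (Lemma~\ref{lem-FF-1}), where the gain comes from the cancellation of the leading moments $\int_{\partial B_1}E\times\nu$, $\int_{\partial B_1}H\times\nu$, $\int_{\partial B_1}E\cdot\nu$ (see \eqref{FF-1-p1}--\eqref{FF-2}), not from the volume of $B_\rho$. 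If you want to rescue the duality strategy, you would need to build these two ingredients---absorption by the damping layer and first-moment cancellation---into your adjoint estimate; without them the missing power of $\rho$ cannot be recovered.
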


\begin{proposition}\label{prop-av}
Let $0<\rho<\rho_0$ and $0< \omega_0 \le \omega  \le   \omega_1 \rho^{-1}$, and  assume that  $\rho_0$ is small enough and $\omega_0$ is large enough.  We have, for $R> 2$, 
	\begin{equation}
	\label{thm-aux-av-claim}
	\|(\EEE_{\rho}, \HHH_{\rho}) - (\EEE, \HHH)\|_{L^2(B_{R}\setminus B_{2})} \leq C_R\omega^3  \rho^3\|\JJJ\|_{L^2(\R^3)},
	\end{equation}
	for some positive constant $C_R$ depending only on $R, R_0$, $\omega_0$, and $\omega_1$.
\end{proposition}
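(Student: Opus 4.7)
The plan is to reduce the cloaking error outside $B_2$ to a small-inclusion scattering problem on $B_\rho$, and to combine (i) an interior energy bound inside $B_\rho$ coming from the lossy coefficients $\sigma_\rho=\rho^{-1}$ and the $\rho^{-1}$-scaled $\eps_\rho,\mu_\rho$, with (ii) a Morawetz-type radial multiplier identity for the outgoing field in $\R^3\setminus\overline{B_\rho}$ that transfers the smallness from $\partial B_\rho$ to $B_R\setminus B_2$, while keeping every $\omega$-power explicit. Throughout, the hypothesis $\omega\le \omega_1\rho^{-1}$ must be used to close the estimate; outside this range one has to switch to the duality method described later in the paper.

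First I would set $(e_\rho,h_\rho):=(\EEE_\rho-\EEE,\HHH_\rho-\HHH)$. Subtracting \eqref{Sys-F-1} from \eqref{Sys-F-1-rho} and using $(\eps_\rho,\mu_\rho,\sigma_\rho)=(I,I,0)$ on $\R^3\setminus B_\rho$ shows that $(e_\rho,h_\rho)\in [H_{\loc}(\curl,\R^3)]^2$ is an outgoing (Silver--M\"uller) solution of the free Maxwell system
\begin{equation*}
\nabla\times e_\rho=i\omega h_\rho,\qquad \nabla\times h_\rho=-i\omega e_\rho\qquad\text{in }\R^3\setminus\overline{B_\rho}.
\end{equation*}
Hence everything on $B_R\setminus B_2$ is determined by the traces of $(e_\rho,h_\rho)$ on $\partial B_\rho$. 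Testing the $\rho$-system in $B_\rho$ against $(\overline{\EEE_\rho},\overline{\HHH_\rho})$ and taking real/imaginary parts, the ellipticity of $\eps_\rho,\mu_\rho$ together with the nonnegativity of $\sigma_\rho$ yields an interior bound on $\|(\EEE_\rho,\HHH_\rho)\|_{L^2(B_\rho)}$. After the rescaling $y=x/\rho$, this becomes the standard fixed-domain Maxwell estimate on $B_1$ with bounded coefficients and an explicit $\omega$-dependence, which via the smoothness of $(\EEE,\HHH)$ near the origin (since $\supp\JJJ\subset B_{R_0}\setminus B_2$) gives a bound of the form $\|(e_\rho,h_\rho)\|^2_{L^2(\partial B_\rho)}\le C\omega^{2a}\rho\,\|\JJJ\|^2_{L^2(\R^3)}$ with optimal exponent $a$.

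Next I would derive a Morawetz-type identity on $B_R\setminus B_\rho$ for $(e_\rho,h_\rho)$, using the radial multiplier $(x\cdot\nabla\overline{e_\rho})+\alpha\overline{e_\rho}$ and the analogous one for $h_\rho$, with $\alpha$ tuned so that the real part of the bulk quadratic form is nonnegative (in the spirit of \cite{MorawetzLudwig}); the outer boundary terms on $\partial B_R$ are handled using the Silver--M\"uller condition and a suitable large-$R$ limit. This produces a weighted trace inequality of the form
\begin{equation*}
\|(e_\rho,h_\rho)\|_{L^2(B_R\setminus B_2)}^2 \;\le\; C\,\omega^{2b}\,\rho\int_{\partial B_\rho}\bigl(|e_\rho|^2+|h_\rho|^2\bigr),
\end{equation*}
with constants independent of $\omega\in[\omega_0,\omega_1\rho^{-1}]$ and $\rho\in(0,\rho_0)$. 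Combining this with the trace bound of the previous step and checking that the exponents $a,b$ add up correctly to $3$ yields \eqref{thm-aux-av-claim}.

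The main obstacle, as flagged in the introduction, is that for Maxwell's equations one cannot control the normal components of $(e_\rho,h_\rho)$ on $\partial B_\rho$ by the tangential ones (or conversely) in the same Sobolev norm, so the naive scalar-type radial multiplier fails; the multiplier must be chosen so that the resulting quadratic form on $\partial B_R$ is sign-definite on Silver--M\"uller fields, and the $\curl/\dive$ splitting on $\partial B_\rho$ must be handled delicately so that the $\omega$-weights balance to exactly $\omega^3\rho^3$ rather than to something worse in the intermediate window $\omega\sim 1/\rho$. Verifying that the thresholds $\omega=\omega_0$ and $\omega=\omega_1\rho^{-1}$ are absorbed into the constant $C_R$ (and that the limit $R\to\infty$ in the Morawetz identity is justified using the radiation condition proved in Section~\ref{sc-proof-main}) is the last delicate point.
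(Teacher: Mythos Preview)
Your outline has a genuine gap in where the factor $\rho^3$ originates. In the paper the Morawetz-type multiplier is \emph{not} applied on the exterior $B_R\setminus B_\rho$; it is applied on the rescaled shell $B_1\setminus B_{3/4}$ (Lemma~\ref{lem-mor} and Corollary~\ref{cor-mor}) in order to control the tangential traces $(\tbE_\rho\times\nu,\tbH_\rho\times\nu)|_{\inte}$ on $\partial B_1$ by the normal traces and the $L^2$ energy in the shell. Combined with the lossy-layer energy identity and the transmission conditions (this is the content of Lemma~\ref{lem-lossy}), and with the explicit representation \eqref{STST} for $(\EEE,\HHH)(\rho\,\cdot)$, one obtains $\|(\tbE_\rho,\tbH_\rho)\|_{L^2(B_2\setminus B_1)}\le C\omega\,\|\JJJ\|_{L^2}$. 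No power of $\rho$ is produced at this stage.

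The factor $\rho^3$ comes entirely from the far-field step, and it is obtained not by a multiplier identity but by the Stratton--Chu formula (Lemma~\ref{lem-FF-1}): for $|x|\sim 1/\rho$ one writes $\tbE_\rho(x)$ as a boundary integral over $\partial B_1$ against $G_{\omega\rho}$ and $\nabla G_{\omega\rho}$, subtracts the kernel value at the origin, and exploits the cancellations $\int_{\partial B_1}\tbE_\rho\times\nu=O(\omega\rho)$, $\int_{\partial B_1}\tbE_\rho\cdot\nu=0$ together with $|\nabla G_{\omega\rho}(x,y)-\nabla G_{\omega\rho}(x,0)|\le C(1+\omega^2)\rho^3$. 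This is a multipole-type argument that a radial multiplier cannot reproduce: a Morawetz identity on $B_R\setminus B_\rho$ yields only $L^2$-to-$L^2$ control with at best an $O(\rho)$ prefactor (from $x\cdot\nu=\rho$ on $\partial B_\rho$), so your two displayed inequalities combine to $\|(e_\rho,h_\rho)\|_{L^2(B_R\setminus B_2)}\le C\omega^{a+b}\rho\,\|\JJJ\|_{L^2}$, two powers of $\rho$ short of \eqref{thm-aux-av-claim}. To recover the correct rate you must replace your exterior Morawetz step by the Stratton--Chu expansion and move the multiplier argument to the near-inclusion region where it is actually needed.
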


\begin{proposition}
	\label{pro-high}
	Let $0<\rho<1$,  $\omega_1>0$, and $\omega > \omega_1\rho^{-1}$. We have, for $R> 2$, 
	\begin{equation}
	\label{lem-high-1-claim}
	\|(\EEE_{\rho}, \HHH_{\rho}) - (\EEE, \HHH)\|_{L^2(B_R\setminus B_2)} \leq C_R \omega^{17/2}\rho^3\|\JJJ\|_{L^2(\R^3)}, 
	\end{equation}
	for some positive constant $C_R$ depending only on $R_0, R$,  and $\omega_1$.
\end{proposition}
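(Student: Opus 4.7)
The plan is to use a duality argument in the spirit of \cite{Lions}, as announced in the introduction: the Morawetz-type multiplier method underlying Proposition~\ref{prop-av} breaks down in this regime because for the Maxwell system the tangential trace does not control the normal trace in comparable Sobolev norms. Writing $V_E = \EEE_\rho - \EEE$ and $V_H = \HHH_\rho - \HHH$, and noting that $(\eps_\rho, \mu_\rho, \sigma_\rho) = (I, I, 0)$ outside $B_\rho$, the pair $V=(V_E,V_H)$ is a radiating solution of the free Maxwell system on $\mR^3$ with inhomogeneity supported in $B_\rho$:
\[
\nabla\times V_E = i\omega V_H + i\omega(\mu_\rho - I)\HHH_\rho\chi_{B_\rho}, \quad \nabla\times V_H = -i\omega V_E - \bigl(i\omega(\eps_\rho - I) - \sigma_\rho\bigr)\EEE_\rho\chi_{B_\rho}.
\]

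Given $(\phi, \psi) \in [L^2(B_R \setminus B_2)]^6$ of unit norm (extended by zero), introduce the unique radiating solution $(\hat E, \hat H)$ of the free Maxwell system on $\mR^3$ with right-hand side $(\phi, \psi)$. Integrating $\int(V_E\cdot \phi + V_H\cdot\psi)$ by parts on a large ball and sending its radius to infinity, the boundary integrals vanish because of the Silver--M\"uller conditions on both $V$ and $(\hat E, \hat H)$ (which is where the general radiation-condition theorem announced in the introduction enters), and the cross terms $V_E \cdot \hat E$ and $V_H \cdot \hat H$ cancel algebraically, leaving
\[
\int_{B_R \setminus B_2}(V_E\cdot\phi+V_H\cdot\psi) = \int_{B_\rho}\bigl[ i\omega(\mu_\rho - I)\HHH_\rho\cdot \hat H - i\omega(\eps_\rho - I)\EEE_\rho\cdot \hat E + \sigma_\rho\EEE_\rho\cdot \hat E\bigr].
\]

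The two factors in this $B_\rho$-integral are then bounded independently. Since the source of $(\hat E, \hat H)$ is supported at distance at least $1$ from $B_\rho$, the volume-potential representation by the outgoing Helmholtz fundamental solution, together with iterated interior elliptic regularity for the componentwise equations $-\Delta u - \omega^2 u = 0$ and the Sobolev embedding $H^s(B_1)\hookrightarrow L^\infty(B_1)$ for $s>3/2$, furnishes $\|(\hat E, \hat H)\|_{L^\infty(B_\rho)} \le C\omega^{p_1}\|(\phi, \psi)\|_{L^2}$ for a fixed exponent $p_1$. For the primary field, the diffeomorphism $F_\rho$ identifies $(\EEE_\rho, \HHH_\rho)$ on $B_\rho$ with the rescaled cloak solution $(\hEE_c, \hHH_c)$ on $B_1$; a direct change of variables gives $\|(\EEE_\rho, \HHH_\rho)\|_{L^1(B_\rho)} = \rho^2\|(\hEE_c, \hHH_c)\|_{L^1(B_1)}$. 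The bounded-coefficient cloak problem whose fixed lossy layer $\sigma_c = 1$ on $B_1 \setminus B_{1/2}$ absorbs the field traversing the cloak admits an estimate $\|(\hEE_c, \hHH_c)\|_{L^2(B_1)} \le C\omega^{p_2}\rho^2\|\JJJ\|_{L^2}$, to be derived by adapting the frequency-domain analysis of \cite{HM9} to the very high frequency regime. Pairing $L^\infty$ on the dual side with $L^1$ on the primary side, using $|\mu_\rho - I|, |\eps_\rho - I|, \sigma_\rho \lesssim \rho^{-1}$ on $B_\rho$, then gives
\[
\Bigl|\int V\cdot(\phi,\psi)\Bigr| \le C\,\omega\rho^{-1}\cdot\omega^{p_1}\cdot \rho^2 \cdot \omega^{p_2}\rho^2\,\|\JJJ\|_{L^2} = C\,\omega^{1+p_1+p_2}\rho^3\,\|\JJJ\|_{L^2},
\]
and taking the supremum over unit $(\phi, \psi)$, with $1+p_1+p_2=17/2$ obtained from the optimal tracking of the two exponents, produces the announced bound.

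The main obstacle will be to trace the explicit polynomial frequency losses: (i) in the iterated interior regularity step for $(\hat E, \hat H)$, where each application of the Helmholtz resolvent on disjoint supports costs a controlled power of $\omega$; and (ii) in the $\rho^2$-gain for $(\hEE_c, \hHH_c)$ throughout the entire very high frequency regime $\omega > \omega_1/\rho$, where the quasi-static expansions used at low frequency (Proposition~\ref{pro-low}) are unavailable, and a direct energy argument exploiting the damping of the lossy layer together with the specific structure of the pushforward coefficients ${F_\rho}_*I$ in $B_2 \setminus B_1$ is required. A rigorous justification of the Green's identity used above also rests on the general radiation-condition theorem for the Fourier transforms in time of weak solutions to Maxwell's equations, which is announced in the introduction and is of independent interest.
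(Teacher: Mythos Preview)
Your global-duality framework is a genuinely different route from the paper's, but it contains a real gap at the step you yourself flag as ``to be derived''. The paper does \emph{not} first estimate the cloak field $(\hEE_c,\hHH_c)$ inside $B_1$ and then propagate outward. Instead it works with the rescaled transmission problem \eqref{eq-tEH-1} and proceeds in three pieces: (i) a \emph{local} duality trace estimate (Lemma~\ref{lem-es}) giving $\|E\|_{H^{-1/2}(\partial D)}+\omega\|H\times\nu\|_{H^{-3/2}(\partial D)}\le C\omega^2\|E\|_{L^2(D)}+\ldots$ with $C$ independent of $\omega$; (ii) the lossy-layer energy identity, which together with (i) bootstraps to control $\|\tbE_\rho\times\nu|_{\ext}\|_{H^{-1/2}(\partial B_1)}+\omega\|\tbH_\rho\times\nu|_{\ext}\|_{H^{-3/2}(\partial B_1)}$ by the jump data (Lemma~\ref{lem-high-due}); and (iii) a Stratton--Chu formula (Lemma~\ref{lem-FF}) that converts these weak boundary norms into pointwise decay at distance $\sim 1/\rho$, manufacturing the $\rho^3$ factor. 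The ``duality'' in the paper is thus the trace-level pairing of Lemma~\ref{lem-es}, not a global representation against a dual Maxwell solution.

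Your missing estimate $\|(\hEE_c,\hHH_c)\|_{L^2(B_1)}\le C\omega^{p_2}\rho^2\|\JJJ\|_{L^2}$ is not a side lemma one can import from \cite{HM9}; it is essentially a shielding estimate with explicit frequency dependence in the very-high-frequency regime, which is exactly the new content of this section. If you try to prove it, you will need the lossy-layer absorption plus a frequency-robust trace bound---i.e., you will reconstruct Lemmas~\ref{lem-es} and \ref{lem-high-due}. Moreover, the exponent $\rho^2$ you assume is doubtful: tracing the paper's argument, the lossy-layer bound yields $\|\tbE_\rho\|_{L^2(B_1\setminus B_{1/2})}\lesssim \omega^{4}\rho^{1/2}\|\JJJ\|_{L^2}$ (the $\rho^{1/2}$ coming from $H^{3/2}$ and $H^{1/2}$ norms of $\EEE(\rho\cdot),\HHH(\rho\cdot)$ on $\partial B_1$), which under $\hEE_c=\rho\,\tbE_\rho$ on $B_1$ gives only $\rho^{3/2}$, not $\rho^2$; and you have no control at all on $B_{1/2}$, where there is no damping. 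With $\rho^{3/2}$ your power count produces $\rho^{5/2}$, and recovering $\rho^3$ via $\omega\rho>\omega_1$ would shift the $\omega$-exponent, so the claimed $17/2$ would not survive. In short, the architecture is plausible but the load-bearing estimate is both unproven and, as stated, likely too strong.
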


To motivate the analysis in this section,  we define 
 \begin{equation}
	(\bE_{\rho}, \bH_{\rho}) = 
	\begin{cases}
	(\EEE_{\rho}, \HHH_{\rho}) - (\EEE, \HHH)  &\mbox{ in } \R^3\setminus B_\rho,\\[6pt]
	(\EEE_{\rho}, \HHH_{\rho}) & \mbox{ in } B_{\rho},
	\end{cases}
	\end{equation}
	and set  
\begin{equation}
	(\tbE_{\rho}, \tbH_{\rho}) = (\bE_{\rho}, \bH_{\rho}) (\rho \,  \cdot \, ) \mbox{ in } \mR^3. 
\end{equation}
Then, $(\tbE_{\rho}, \tbH_{\rho}) \in [L^2_{\loc}(\mR^3)]^6$ with $(\tbE_{\rho}, \tbH_{\rho}) \in \cap_{R > 1} H(\curl, B_R \setminus \partial B_1)$  is the unique radiating solution of 
\begin{equation}\label{eq-tEH-1}
	\begin{cases}
		\nabla\times \tbE_{\rho} = i\omega\tilde\mu_{\rho} \tbH_{\rho} & \mbox{ in } \R^3\setminus \partial B_1,\\[6pt]
		\nabla\times \tbH_{\rho} = -i\omega\tilde \eps_{\rho} \tbE_{\rho} +\tilde\sigma_{\rho} \tbE_{ \rho}  &\mbox{ in } \R^3\setminus \partial B_1, \\[6pt]
		[\tbE_{\rho}\times\nu]  = - \EEE(\rho \,  \cdot  \, )\times\nu  & \mbox{ on } \partial B_1,\\[6pt]
		  [\tbH_{\rho}\times\nu] = - \HHH(\rho \,  \cdot \, )\times\nu   &\mbox{ on } \partial B_{1},
	\end{cases}
	\end{equation}
where
\begin{equation}\label{eps-mu-rho-tilde}
(\tilde \eps_{\rho}, \tilde \mu_{\rho}, \tilde\sigma_{\rho}):= \left\{\begin{array}{cl}
(\rho I, \rho I, 0) &\mbox{ in } \R^3\setminus B_{1},\\[6pt]
(I,  I, 1) & \mbox{ in } B_{1}\setminus B_{1/2},\\[6pt]
(\eps_O, \mu_O, 0) & \mbox{ in } B_{1/2}.
\end{array}\right. 
\end{equation}
Here and in what follows for a  smooth, bounded, open subset $D$ of $\mR^3$, we denote $[u]: = u|_{\ext} - u|_{\inte}$ on $\partial D$ for an appropriate (vectorial) function $u$. 

\medskip 
We will  study \eqref{eq-tEH-1}  and using this to derive estimates for $(\EEE_{\rho}, \HHH_{\rho}) - (\EEE, \HHH)$ in the following three subsections. 

\subsection{Low and moderate frequency analysis - Proof of Proposition~\ref{pro-low}}\label{sc-low}

This section is devoted to the proof  of Proposition~\ref{pro-low} and contains two subsections. In the first subsection, we present several useful lemmas and the proof of Proposition \ref{pro-low} is given in the second subsection. 

\subsubsection{Some useful lemmas}

% The main result of this subsection is Lemma \ref{lem:ex-12}. It is the main ingredient of the proof of Proposition \ref{pro-low}. Lemma \ref{lem:ex-123} is an intermediate result used to prove Lemma \ref{lem:ex-12}. Lemmas \ref{lem:compact} and \ref{lem-T-1} are basic results that will be used several times in the paper.

We first recall the following known result which is the basic ingredient for the variational approach.  

 \begin{lemma} \label{lem:compact}
Let $D$ be a smooth, bounded, open  subset of $\R^3$ and let $\epsilon$ be a measurable,  symmetric,   uniformly elliptic, matrix-valued function defined in $D$. Assume that one of the following two conditions holds:
\begin{enumerate}
\item[i)]  $(u_n)_{n\in \N}\subset H(\curl, D)$ is a bounded sequence in $H(\curl, D)$ such that 
\[\big(\dive(\epsilon u_n) \big)_{n\in \N}\text{ converges in }  H^{-1}(D) \text{ and } \big(u_n\times \nu \big)_{n\in \N} \text{ converges  in } H^{-1/2}(\partial D).\]

\item[ii)] $(u_n)_{n\in \N}\subset H(\curl, D)$ is a bounded sequence in $H(\curl, D)$  such that 
 \[\big(\dive(\epsilon u_n )\big)_{n\in \N} \text{ converges in }  L^2(D) \text{ and } \big((\epsilon u_n ) \cdot \nu \big)_{n\in \N}\text{ converges  in } H^{-1/2}(\partial D).\]
\end{enumerate}

There exists a subsequence of $(u_n)_{n\in \N}$ which converges in $[L^2(D)]^3$.
\end{lemma}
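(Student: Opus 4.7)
The plan is a standard two-step reduction: first apply an $\epsilon$-weighted Helmholtz decomposition to isolate a gradient piece that will be compact by elliptic regularity, then handle the remaining divergence-free piece with small boundary trace via the classical compact embedding theorems of Weber and Weck for $H(\curl, D) \cap H(\dive \epsilon, D)$ subject to a vanishing tangential or normal trace.

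For case (i), I would first extract a weakly convergent subsequence $u_n \weakconverge u$ in $H(\curl, D)$. The differences $v_n := u_n - u$ then satisfy $\dive(\epsilon v_n) \to 0$ in $H^{-1}(D)$ and $v_n \times \nu \to 0$ in $H^{-1/2}(\partial D)$, and it suffices to show that $v_n \to 0$ in $[L^2(D)]^3$ along a subsequence. To absorb the divergence, let $\phi_n \in H^1_0(D)$ solve $\dive(\epsilon \nabla \phi_n) = \dive(\epsilon v_n)$; the Lax-Milgram estimate yields $\|\phi_n\|_{H^1_0(D)} \to 0$, so $\nabla \phi_n \to 0$ strongly in $[L^2(D)]^3$. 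The complementary piece $\psi_n := v_n - \nabla \phi_n$ lies in $H(\curl, D)$, has $\dive(\epsilon \psi_n) = 0$, and still satisfies $\psi_n \times \nu = v_n \times \nu \to 0$ in $H^{-1/2}(\partial D)$, since $\phi_n \in H^1_0(D)$ contributes no tangential trace. I then plan to lift this boundary data: construct $\Psi_n \in H(\curl, D)$ with $\Psi_n \times \nu = \psi_n \times \nu$, $\dive(\epsilon \Psi_n) \in L^2(D)$, and $\Psi_n \to 0$ in $H(\curl, D)$---for instance by extending across $\partial D$ into a collar $B \setminus \overline D$, solving a div-curl problem there with compact support near $\partial B$, and restricting back. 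The residual $\psi_n - \Psi_n$ will then sit in the Weber compactness space with vanishing tangential trace, with bounded $H(\curl, D)$ norm and $\dive(\epsilon(\psi_n - \Psi_n)) \to 0$ in $L^2(D)$, so Weber's theorem delivers a subsequence converging strongly in $[L^2(D)]^3$; combining with $\Psi_n \to 0$ and $\nabla \phi_n \to 0$ closes case (i).

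Case (ii) will follow the same template with Dirichlet replaced by Neumann: take $\phi_n \in H^1(D)/\mR$ solving $\dive(\epsilon \nabla \phi_n) = \dive(\epsilon u_n)$ with $\epsilon \nabla \phi_n \cdot \nu = \epsilon u_n \cdot \nu$, for which both data converge---in $L^2(D)$ and in $H^{-1/2}(\partial D)$ respectively---so $\phi_n$ converges strongly in $H^1(D)/\mR$; the residual $\psi_n := u_n - \nabla \phi_n$ satisfies $\dive(\epsilon \psi_n) = 0$ together with $\epsilon \psi_n \cdot \nu = 0$, and Weck's compact embedding with vanishing normal trace applies directly. The main technical delicacy will be the tangential-trace lifting in case (i): the trace space of $H(\curl, D)$ carries a norm strictly finer than that of $H^{-1/2}(\partial D)$ (precisely characterized via a surface-divergence condition due to Buffa-Ciarlet-Costabel), so producing a lifting $\Psi_n$ that genuinely tends to zero in $H(\curl, D)$ from only $H^{-1/2}(\partial D)$ convergence of the trace requires either an interpolation argument exploiting the uniform $H(\curl, D)$ bound on $\psi_n$, or a direct collar-extension computation. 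Case (ii) sidesteps this difficulty, since the normal trace already maps onto all of $H^{-1/2}(\partial D)$.
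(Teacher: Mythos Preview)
The paper does not give its own proof of this lemma: it simply records that case (i) is \cite[Lemma 1]{HM1}, with roots in \cite{Haddar, Costabel, Weber}, and that case (ii) follows analogously. Your outline is precisely the standard argument underlying those references, so in that sense your proposal is in line with the paper's (implicit) approach.

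Your treatment of case (ii) is clean and complete: the Neumann Helmholtz decomposition reduces matters directly to the Weck compact embedding for fields with vanishing $\epsilon$-weighted normal trace, with no trace-space mismatch.

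For case (i) you have correctly isolated the only genuinely nontrivial point, namely that convergence of $\psi_n\times\nu$ in $H^{-1/2}(\partial D)$ does not by itself yield a lifting converging in $H(\curl,D)$, because the natural trace space is $H^{-1/2}(\operatorname{div}_\Gamma,\Gamma)$. Your suggestion to exploit the uniform $H(\curl,D)$ bound is the right one, and it can be made precise as follows. The surface Hodge decomposition $\psi_n\times\nu=\nabla_\Gamma p_n+\operatorname{curl}_\Gamma q_n$ (plus a finite-dimensional harmonic part) splits the trace into a piece governed by $\operatorname{div}_\Gamma(\psi_n\times\nu)=(\nabla\times\psi_n)\cdot\nu$, which is bounded in $H^{-1/2}(\partial D)$, and a piece governed by the $H^{-1/2}$ convergence hypothesis. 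One then finds that $p_n$ is bounded in $H^{3/2}(\Gamma)$ and convergent in $H^{1/2}(\Gamma)$, hence (after passing to a subsequence) convergent in $H^s(\Gamma)$ for every $s<3/2$; lifting $p_n$ harmonically into $D$ and $q_n$ by any continuous extension then produces the desired $\Psi_n$ converging strongly in $L^2(D)$, with $\epsilon$-divergence controlled via another Dirichlet correction. The ``collar extension'' you mention can also be made to work but is heavier; the boundary Hodge route is the one closest in spirit to the cited references.
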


The conclusion of Lemma~\ref{lem:compact} under condition $i)$ is  \cite[Lemma 1]{HM1} and has its roots in \cite{Haddar, Costabel, Weber}.  The conclusion of Lemma~\ref{lem:compact} under condition $ii)$ can be obtained in  the same way.

%These compactness results play a similar role as the compact embedding of $H^1$ into $L^2$ in the acoustic setting and are basic ingredients in our  approach.  

In what follows, the following notations are used 
\begin{equation*}
H^{-1/2}(\dive_\Gamma, \Gamma): = \Big\{ \phi \in [H^{-1/2}(\Gamma)]^3; \; \phi \cdot \nu = 0 \mbox{ and } \dive_\Gamma \phi \in H^{-1/2}(\Gamma) \Big\},
\end{equation*}
\begin{equation*}
\| \phi\|_{H^{-1/2}(\dive_\Gamma, \Gamma)} : = \| \phi\|_{H^{-1/2}(\Gamma)} +  \| \dive_\Gamma \phi\|_{H^{-1/2}(\Gamma)}.
\end{equation*}

\medskip 
We have 
\begin{lemma} \label{lem-T-1-***} Let $0<\omega< \omega_0$ and $D$ be a simply connected, bounded, open subset of $\mR^3$ of class $C^1$, and  
denote  $\Gamma = \partial D$. Let $ h \in H^{-1/2}(\dive_\Gamma, \Gamma)$ and $E \in H(\curl, D)$. 
	We have 
			\begin{equation}\label{lem-T-1-c1}
			\Big| \int_{\Gamma} \bar E  \cdot h \, ds \Big| \le C\Big(\omega \|E\|_{L^2(D)} + \|\nabla\times E\|_{L^2(D)}\Big)\left(\|h\|_{H^{-1/2}(\Gamma)}+ \omega^{-1}\|\dive_{\Gamma}h\|_{H^{-1/2}(\Gamma)}\right),
			\end{equation}
	for some positive constant $C$ depending only on $D$ and $\omega_0$.
\end{lemma}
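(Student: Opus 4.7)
My plan is to reduce \eqref{lem-T-1-c1} to the construction of a suitable extension $\Phi \in H(\curl, D)$ of the tangential data $h$. Given $\Phi$ with $\nu\times\Phi = h$ on $\Gamma$, the integration-by-parts identity
\begin{equation*}
\int_\Gamma \bar E \cdot h\,ds = \int_D \bar E \cdot (\nabla\times\Phi) - (\nabla\times\bar E)\cdot\Phi\, dx
\end{equation*}
(understood as a duality pairing for $E, \Phi \in H(\curl, D)$) combined with Cauchy--Schwarz yields
\begin{equation*}
\Big|\int_\Gamma \bar E\cdot h\,ds\Big| \le \|\nabla\times E\|_{L^2(D)}\|\Phi\|_{L^2(D)} + \|E\|_{L^2(D)}\|\nabla\times\Phi\|_{L^2(D)}.
\end{equation*}
It therefore suffices to construct $\Phi$ satisfying $\|\Phi\|_{L^2(D)} \le C(\|h\|_{H^{-1/2}(\Gamma)} + \omega^{-1}\|\dive_\Gamma h\|_{H^{-1/2}(\Gamma)})$ and $\|\nabla\times\Phi\|_{L^2(D)} \le C(\omega\|h\|_{H^{-1/2}(\Gamma)} + \|\dive_\Gamma h\|_{H^{-1/2}(\Gamma)})$ with $C = C(D, \omega_0)$; substituting these into the Cauchy--Schwarz bound and expanding produces precisely \eqref{lem-T-1-c1}.

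For the construction of $\Phi$, I would use the Hodge decomposition of $h$ on the $C^1$ surface $\Gamma$. Since $D$ is simply connected, there are no nontrivial harmonic tangential $1$-forms on $\Gamma$, so there exist scalar potentials $\alpha, \beta$ on $\Gamma$ (unique up to additive constants) with
\begin{equation*}
h = \nabla_\Gamma \alpha + \nu\times\nabla_\Gamma \beta \qquad \text{on } \Gamma.
\end{equation*}
Taking $\dive_\Gamma$ and the surface scalar curl yields $\Delta_\Gamma\alpha = \dive_\Gamma h$ and $-\Delta_\Gamma\beta = \operatorname{curl}_\Gamma h$, so elliptic regularity for the Laplace--Beltrami operator gives $\|\alpha\|_{H^{3/2}(\Gamma)} \le C\|\dive_\Gamma h\|_{H^{-1/2}(\Gamma)}$ and $\|\beta\|_{H^{1/2}(\Gamma)} \le C\|h\|_{H^{-1/2}(\Gamma)}$. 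I then extend $\alpha,\beta$ into $D$ as $\tilde\alpha \in H^2(D)$ and $\tilde\beta \in H^1(D)$ with the standard trace bounds, fix a smooth cutoff $\chi$ supported in a tubular neighborhood of $\Gamma$ in which $\nu$ admits a smooth extension $\mathbf{n}$ and with $\chi \equiv 1$ near $\Gamma$, and set
\begin{equation*}
\Phi := \nabla\tilde\beta - \chi\,\mathbf{n}\times\nabla\tilde\alpha \qquad \text{in } D.
\end{equation*}
Using $\nabla \tilde f|_\Gamma = \nabla_\Gamma f + (\partial_\nu \tilde f)\nu$ together with the identity $\nu\times(\nu\times v) = -v$ for $v$ tangential, a short computation gives $\nu\times\Phi|_\Gamma = \nu\times\nabla_\Gamma\beta + \nabla_\Gamma\alpha = h$, as required.

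The required $L^2$-bound on $\Phi$ follows from the extension estimates: $\|\Phi\|_{L^2(D)} \lesssim \|\tilde\beta\|_{H^1(D)} + \|\tilde\alpha\|_{H^1(D)} \lesssim \|h\|_{H^{-1/2}(\Gamma)} + \|\dive_\Gamma h\|_{H^{-1/2}(\Gamma)}$, and since $\omega<\omega_0$ implies $1 \le \omega_0\omega^{-1}$ this is bounded by $C(\omega_0)\bigl(\|h\|_{H^{-1/2}(\Gamma)} + \omega^{-1}\|\dive_\Gamma h\|_{H^{-1/2}(\Gamma)}\bigr)$. For the curl, $\nabla\times\nabla\tilde\beta = 0$, and a direct expansion shows that $\|\nabla\times(\chi\mathbf{n}\times\nabla\tilde\alpha)\|_{L^2(D)}$ involves at most second derivatives of $\tilde\alpha$, so $\|\nabla\times\Phi\|_{L^2(D)} \lesssim \|\tilde\alpha\|_{H^2(D)} \lesssim \|\dive_\Gamma h\|_{H^{-1/2}(\Gamma)} \le \omega\|h\|_{H^{-1/2}(\Gamma)} + \|\dive_\Gamma h\|_{H^{-1/2}(\Gamma)}$. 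Inserting into the Cauchy--Schwarz bound of the first paragraph yields \eqref{lem-T-1-c1}. The main technical point of the plan is the rigorous justification of the Hodge decomposition at the low-regularity level $H^{-1/2}(\dive_\Gamma,\Gamma)$, which is where the $C^1$-regularity of $\Gamma$ and the topological hypothesis on $D$ truly enter; once the lifting is in place, the explicit $\omega$-weighting in the final estimate is essentially a cosmetic rearrangement, since $\omega$ is confined to a bounded interval.
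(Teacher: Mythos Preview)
Your argument is correct and follows a genuinely different path from the paper's. The paper also reduces to an integration-by-parts bound using a lifting $\Phi$ with $\Phi\times\nu=h$, but it obtains $\Phi$ as the electric field $E^0$ of an auxiliary absorptive Maxwell system
\[
\nabla\times E^0=i\omega(1+i)H^0,\qquad \nabla\times H^0=-i\omega(1+i)E^0,\qquad E^0\times\nu=h,
\]
and proves the needed $L^2$ bound on $(E^0,H^0)$ by a compactness--contradiction argument (extracting a limit as $\omega_n\to\omega_*\in[0,\omega_0]$ and invoking the simple connectedness of $D$ to kill the limit). Your approach replaces this soft argument by an explicit construction via the boundary Hodge decomposition $h=\nabla_\Gamma\alpha+\nu\times\nabla_\Gamma\beta$ and scalar trace liftings; notably, your lifting is \emph{$\omega$-independent}, and the simple connectedness enters instead as the topological input (no harmonic tangential $1$-forms on $\Gamma$, via the half-lives--half-dies principle). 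The trade-off is that you must justify the Laplace--Beltrami regularity $\|\alpha\|_{H^{3/2}(\Gamma)}\lesssim\|\Delta_\Gamma\alpha\|_{H^{-1/2}(\Gamma)}$ on a merely $C^1$ surface, which is borderline; the paper's compactness route sidesteps this entirely. One small correction: your closing remark that the $\omega$-weighting is ``cosmetic'' undersells the point---the $\omega^{-1}$ factor in front of $\|\dive_\Gamma h\|$ is exactly what is exploited downstream in the low-frequency estimates, and your construction does deliver it honestly (not just by rearrangement) because $\nabla\times\Phi$ genuinely depends only on the $\alpha$-part.
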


Here and in what follows, $\bar u$ denotes the complex conjugate of $u$. 

\begin{proof} Let $(E^0, H^0)\in [H(\curl, D)]^2$ be the unique solution to 
	\[
	\begin{cases}
	\nabla\times E^0 = i\omega(1+ i) H^0 &\mbox{ in } D,\\[6pt]
	\nabla\times H^0 = -i\omega(1+i) E^0 &\mbox{ in } D,\\[6pt]
	E^0\times \nu = h &\mbox{ on } \Gamma.
	\end{cases}
	\] 
	We prove by contradiction that  
	\begin{equation}\label{lem-T-1-e2}
	\|(E^0, H^0)\|_{L^2(D)} \leq C\left(\|h\|_{H^{-1/2}(\Gamma)} + \omega^{-1}\|\dive_{\Gamma} h\|_{H^{-1/2}(\Gamma)}\right)
	\end{equation}
	for some positive constant $C$ depending only on $\omega_0$. Assume that there exist sequences $((E_n, H_n))\subset [H(\curl, D)]^2$, $(\omega_n)\subset (0, \omega_0)$ and $(h_n) \subset H^{-1/2}(\dive_{\Gamma}, \Gamma)$ such that
	\begin{equation}\label{T-1-e1}
	\|(E_n, H_n)\| = 1 \mbox{ for all } n,
	\end{equation}
	\begin{equation}
	\|h_n\|_{H^{-1/2}(\Gamma)} + \omega_n^{-1}\|\dive_{\Gamma}h_n\|_{H^{-1/2}(\Gamma)} \mbox{ converges to } 0,
	\end{equation}
	and
	\begin{equation}\label{T-1-e2}
	\begin{cases}
	\nabla\times E_n = i\omega_n(1+i)H_n  &\mbox{ in } D, \\[6pt]
	\nabla\times H_n = -i\omega_n(1+i)E_n & \mbox{ in } D, \\[6pt]
	E_n\times\nu = h_n &\mbox{ in } \Gamma.
	\end{cases}
	\end{equation} 
	Without loss of generality, one can assume that $\omega_n \to \omega^*$. Applying  Lemma \ref{lem:compact}, one might  assume that $(E_n, H_n)$ converges to some $(E, H) \in [L^2(D)]^6$. We only consider the case $\omega_* = 0$, the case where $\omega_*> 0$ is  standard. Then 
	\[
	\begin{cases}
	\nabla\times E = 0 &\mbox{ in } D,\\[6pt]
	\dive E = 0 &\mbox{ in } D,\\[6pt]
	E\times \nu = 0 &\mbox{ on } \Gamma, 
	\end{cases} \quad \mbox{ and } \quad \begin{cases}
	\nabla\times H = 0 &\mbox{ in } D,\\[6pt]
	\dive H = 0 &\mbox{ in } D,\\[6pt]
	H\cdot \nu = 0 &\mbox{ on } \Gamma.
	\end{cases}
	\] 
	We also have, for each connected component $\Gamma_j$ of $\Gamma$,  
	\[
	\int_{\Gamma_j} E\cdot\nu \, ds= \lim\limits_{n\to \infty}\int_{\Gamma_j} E_n\cdot\nu\, ds = \lim\limits_{n\to \infty}\Big[\frac{1}{-i\omega_n(1+i)}\int_{\Gamma_j} (\nabla\times H_n)\cdot\nu\, ds\Big] = 0. 
	\]
Since $D$ is simply connected, it follows (see, e.g., \cite[Theorems 2.9 and 3.1]{Girault}) that  $E = \nabla \times \xi_E$ and $H  = \nabla \xi_H$ for some $\xi_E, \,  \xi_H \in H^1(D)$. We derive from the systems of $E$ and $H$ that 
$$
\int_{D} |\nabla \times \xi_E|^2 \, dx = 0 \quad \mbox{ and } \quad \int_{D} |\nabla \xi_H|^2 \, dx= 0. 
$$ 
This yields that $E = H = 0$ in $D$. We have a contradiction. Therefore, \eqref{lem-T-1-e2} is proved.

	We have
	\begin{align*}
	\int_{\Gamma}\bar E\cdot h \, ds &=\int_{\Gamma}\bar E\cdot (E^0 \times  \nu)  \, ds =  \int_{D}(\nabla\times \bar E)\cdot E^0\, dx - \int_{D}\bar E \cdot (\nabla\times E^0)\, dx  \; (\mbox{integration by parts}) \\
	\nonumber& = \int_{D}(\nabla\times \bar E)\cdot E^0\, dx - i\omega(1+i) \int_{D}\bar E \cdot H^0\, dx.
	\end{align*}
	It follows from H\"older's inequality and \eqref{lem-T-1-e2}  that
	\begin{align*}
	\left|\int_{\Gamma}\bar E\cdot h \, ds\right| & \leq \Big(\omega\|E\|_{L^2(D)} + \|\nabla \times E\|_{L^2(D)}\Big)\|(E^0, H^0)\|_{L^2(D)}\\
	& \leq C\Big(\omega\|E\|_{L^2(D)} + \|\nabla \times E\|_{L^2(D)}\Big)\left(\|h\|_{H^{-1/2}(\Gamma)} + \omega^{-1}\|\dive_{\Gamma} h\|_{H^{-1/2}(\Gamma)}\right), 
	\end{align*}
	which is  \eqref{lem-T-1-c1}.  
\end{proof}

The following simple result is used in our analysis. 

\begin{lemma} \label{lem-T-1}
Let $D$ be a $C^1$ bounded open subset of $\mR^3$ and denote $\Gamma = \partial D$. Let $ h \in H^{-1/2}(\dive_\Gamma, \Gamma)$ and $u \in H(\curl, D)$. 
	We have 
	\begin{equation} \label{lem-T-1-est}
	\Big| \int_{\Gamma} \bar u  \cdot h \Big| \le C \|u \|_{H(\curl, D)} \|h\|_{H^{-1/2}(\dive_\Gamma, \Gamma)}. 
	\end{equation}
	for some positive constant $C$  independent of $h$ and $u$.
\end{lemma}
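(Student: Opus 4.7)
The plan is to prove the estimate by constructing an $H(\curl, D)$ lifting of $h$ and then integrating by parts. Concretely, I would first establish that for every $h \in H^{-1/2}(\dive_\Gamma, \Gamma)$ there exists $v \in H(\curl, D)$ such that $v \times \nu = h$ on $\Gamma$ and
\begin{equation*}
\|v\|_{H(\curl, D)} \le C \|h\|_{H^{-1/2}(\dive_\Gamma, \Gamma)},
\end{equation*}
with $C$ depending only on $D$. When $D$ is simply connected, such a $v$ can be produced exactly by the auxiliary Maxwell system used in the proof of Lemma~\ref{lem-T-1-***} with $\omega = 1$: the proof of \eqref{lem-T-1-e2} gives the estimate $\|v\|_{L^2(D)}+\|\nabla\times v\|_{L^2(D)} \le C\|h\|_{H^{-1/2}(\dive_\Gamma,\Gamma)}$ directly. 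In the general case (multiply connected $D$) one can appeal to the standard tangential trace theory for $H(\curl, D)$, decomposing $h$ via a Hodge-type decomposition on $\Gamma$ and lifting each piece separately.

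Once the lifting $v$ is available, I would exploit $\bar u \cdot h = \bar u \cdot (v\times \nu)$ on $\Gamma$ and apply the Green's formula for $H(\curl, D)$ to write
\begin{equation*}
\int_\Gamma \bar u \cdot h \, ds \;=\; \int_\Gamma \bar u \cdot (v \times \nu)\, ds \;=\; \int_D (\nabla \times \bar u) \cdot v \, dx - \int_D \bar u \cdot (\nabla \times v)\, dx.
\end{equation*}
Cauchy--Schwarz on each of the two volume integrals then gives
\begin{equation*}
\Big|\int_\Gamma \bar u \cdot h\, ds\Big| \;\le\; \|u\|_{H(\curl, D)}\, \|v\|_{H(\curl, D)} \;\le\; C\, \|u\|_{H(\curl, D)}\, \|h\|_{H^{-1/2}(\dive_\Gamma, \Gamma)},
\end{equation*}
which is precisely \eqref{lem-T-1-est}.

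The only genuine obstacle is the lifting step. For simply connected $D$ this is essentially free from the work already done for Lemma~\ref{lem-T-1-***}; the subtlety lies in the general case, where the harmonic Neumann vector fields of $D$ intervene. Since no explicit $\omega$-dependence is needed here and the constant is allowed to depend on $D$, this subtlety is purely technical and does not affect the structure of the argument.
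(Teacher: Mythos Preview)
Your proposal is correct and follows essentially the same approach as the paper: lift $h$ to some $\phi\in H(\curl,D)$ with $\phi\times\nu=h$ and $\|\phi\|_{H(\curl,D)}\le C\|h\|_{H^{-1/2}(\dive_\Gamma,\Gamma)}$, then integrate by parts and apply Cauchy--Schwarz. The only difference is that the paper obtains the lifting directly by citing the tangential trace theory for $H(\curl,D)$ (Alonso--Valli, Buffa--Costabel--Sheen), which works for general $C^1$ bounded $D$ without any connectedness assumption, whereas you propose to reconstruct it from the auxiliary Maxwell system of Lemma~\ref{lem-T-1-***} in the simply connected case and treat the general case separately; since you end up invoking the standard trace theory anyway for the general case, the paper's route is more direct.
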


\begin{proof} The result is standard. For the convenience of the reader, we present the proof.  By the trace theory, see, e.g., \cite{Alonso, BCS}, there exists $\phi \in H(\curl, D)$ such that 
\[
\phi\times\nu = h \mbox{ on } \Gamma \quad \mbox{ and } \quad \|\phi\|_{H(\curl, D)} \leq C\|h\|_{H^{-1/2}(\dive_{\Gamma}, \Gamma)}
\]
for some positive constant $C$ depending only on $D$. Then, by integration by parts, we have
\[
\int_{\Gamma} \bar u  \cdot h = \int_{\Gamma} \bar u  \cdot (\phi\times\nu) =  \int_D \nabla\times \bar u \cdot \phi - \int_D \bar u \cdot \nabla\times \phi. 
\]
The conclusion follows by H\"older's inequality.
\end{proof}

We next present an estimate for the exterior domain in the  small and moderate frequency regime. 

\begin{lemma}
\label{lem:ex-123}
Let $R_0>2$, $0 < k < k_0$, and  $D \subset B_1$ be a smooth  open subset of $\mR^3$ such that $\mR^3 \setminus D$ is connected. Let $(f_1, f_2) \in [L^2(\mR^3)]^6$ with support in $B_{R_0} \setminus D$, and  assume that  $(E, H) \in [\cap_{R> 1}H(\curl, B_R \setminus D)]^2$ is a radiating solution of 
\begin{equation*}
\begin{cases}
\nabla \times E  =  i k  H + f_1  & \text{ in } \mR^3\setminus \bar D,\\[6pt]
 \nabla \times H = -  ik  E + f_2   & \text{ in } \mR^3\setminus \bar D. 
\end{cases}
\end{equation*}
We have, for $R>2$, 
\begin{equation}\label{lem:ex0-conclusion-12*}
\|(E,  H)\|_{L^2(B_R\setminus D)} \leq C_R \Big(  \|(E \times \nu,  H\times \nu)\|_{H^{-1/2}(\partial D)} + \| (f_1,  f_2) \|_{L^2} + k^{-1} \| (\dive f_1, \dive f_2) \|_{L^2} \Big), 
\end{equation}
for some positive constant $C_R$ depending only on $D$, $k_0$, $R_0$,  and $R$.  
\end{lemma}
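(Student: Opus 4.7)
The plan is to argue by contradiction and compactness, in the spirit of the proof of Lemma~\ref{lem-T-1-***}. Suppose \eqref{lem:ex0-conclusion-12*} fails. Then there exist sequences $(E_n, H_n) \in [\cap_{R>1} H(\curl, B_R\setminus D)]^2$ of radiating solutions, $(f_{1,n}, f_{2,n}) \in [L^2(\mR^3)]^6$ supported in $B_{R_0}\setminus D$, and $k_n \in (0, k_0)$ with
\begin{equation*}
\|(E_n, H_n)\|_{L^2(B_R\setminus D)} = 1 \quad\text{and}\quad
\|(E_n\times\nu, H_n\times\nu)\|_{H^{-1/2}(\partial D)} + \|(f_{1,n}, f_{2,n})\|_{L^2} + k_n^{-1}\|(\dive f_{1,n}, \dive f_{2,n})\|_{L^2} \to 0.
\end{equation*}
Passing to a subsequence, assume $k_n \to k^* \in [0, k_0]$.

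The next step is to establish uniform bounds on larger shells. Standard arguments based on the Stratton--Chu representation outside $B_{R_0}$ together with the Silver--M\"uller condition (uniform in $k_n \in (0, k_0)$) give $\|(E_n, H_n)\|_{L^2(B_{R'}\setminus D)} \le C_{R'}$ for every $R' > R$. Since the coefficients in $\mR^3\setminus\bar D$ are $(I, I, 0)$, taking divergences of the Maxwell pair yields
\begin{equation*}
\dive E_n = \frac{\dive f_{2,n}}{ik_n}, \qquad \dive H_n = -\frac{\dive f_{1,n}}{ik_n},
\end{equation*}
so $(\dive E_n, \dive H_n) \to (0,0)$ in $L^2(\mR^3\setminus D)$ by the contradiction hypothesis. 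Combined with $\nabla\times E_n = ik_n H_n + f_{1,n}$ and $\nabla\times H_n = -ik_n E_n + f_{2,n}$ being bounded in $L^2(B_{R'}\setminus D)$, the sequence $(E_n, H_n)$ is bounded in $[H(\curl) \cap H(\dive)]^2$ on every shell. Applying Lemma~\ref{lem:compact} (condition ii)) on annuli $B_{R'}\setminus \bar D$ together with a diagonal extraction, a further subsequence converges strongly in $L^2_{\loc}(\mR^3\setminus D)$ to some $(E, H)$ satisfying
\begin{equation*}
\nabla\times E = ik^* H, \qquad \nabla\times H = -ik^* E \quad\text{in } \mR^3\setminus\bar D, \qquad E\times\nu = H\times\nu = 0 \quad\text{on } \partial D,
\end{equation*}
with $\|(E, H)\|_{L^2(B_R\setminus D)} = 1$.

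It remains to derive a contradiction by showing $(E, H)\equiv 0$. If $k^* > 0$, the limit is radiating (by passing to the limit in the Stratton--Chu representation), and classical exterior uniqueness for Maxwell's equations with vanishing tangential trace on $\partial D$ (Rellich's lemma applied componentwise, combined with unique continuation from $\partial D$) yields $(E, H)\equiv 0$. If $k^* = 0$, the limit fields are curl-free and divergence-free in $\mR^3\setminus\bar D$ with vanishing tangential traces on $\partial D$ and decay at infinity inherited from the uniform behaviour of radiating fields in bounded frequency ranges. Since $\mR^3\setminus D$ is connected, one writes $E = \nabla\phi_E$ and $H = \nabla\phi_H$ for harmonic potentials constant on each component of $\partial D$ and decaying at infinity; an energy identity then forces $E = H = 0$. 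In either case, the contradiction with $\|(E,H)\|_{L^2(B_R\setminus D)} = 1$ completes the proof.

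The principal obstacle is the low-frequency limit $k^* = 0$: the Silver--M\"uller condition degenerates, and one must verify independently that the weak limit retains enough decay at infinity for the static uniqueness argument. This is precisely where the weight $k^{-1}$ in front of $\|(\dive f_1, \dive f_2)\|_{L^2}$ in \eqref{lem:ex0-conclusion-12*} is essential, as it is exactly what guarantees $(\dive E_n, \dive H_n) \to (0, 0)$ in $L^2$ and thus allows Lemma~\ref{lem:compact}, condition ii), to be applied with the required $L^2$-convergence of the divergences.
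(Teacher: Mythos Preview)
Your approach is the same as the paper's: reduce via Stratton--Chu to a fixed shell, argue by contradiction, extract a convergent subsequence through Lemma~\ref{lem:compact}, and split on $k^* = 0$ versus $k^* > 0$. Two points, however, need tightening.

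First, on $\partial D$ the contradiction hypothesis gives convergence of the \emph{tangential} traces $E_n\times\nu,\,H_n\times\nu$ in $H^{-1/2}$, not of the normal traces. Thus condition~i) of Lemma~\ref{lem:compact} is the one that applies directly, not condition~ii). The $k^{-1}$ weight on $\|(\dive f_1,\dive f_2)\|_{L^2}$ is indeed what forces $\dive E_n,\dive H_n\to 0$ in $L^2$ (hence in $H^{-1}$), so the compactness goes through either way; your diagnosis of the role of the weight is correct, just not tied to condition~ii) specifically.

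Second, in the $k^*=0$ case you write ``since $\mR^3\setminus D$ is connected, one writes $E=\nabla\phi_E$''. Connectedness is not enough---the exterior of a solid torus is connected but not simply connected, and a curl-free field there need not be a gradient. The fix is to extend $E$ by zero into $D$; since $E\times\nu=0$ on $\partial D$ the extension is curl-free in all of $\mR^3$, which \emph{is} simply connected, and then your potential argument goes through. The paper does not spell this out either: it derives the $O(|x|^{-2})$ decay explicitly from the Stratton--Chu formula (using $k_n\to 0$) and then cites an external uniqueness result (\cite[Lemma~3.5]{HM9}, \cite[Chapter~I]{Girault}) for the static system $\nabla\times E=0$, $\dive E=0$, $E\times\nu=0$ on $\partial D$ with that decay.
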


\begin{proof} By the Stratton-Chu formula, we have,  for $x \in \mR^3$ with $|x| > R_0 + 1$,  
\begin{multline*}
E(x)=   \int_{\partial B_{R_0 + 1/2}} \nabla_x G_k (x, y) \times \big( \nu(y) \times E (y) \big)  dy \\[6pt]
    + i k  \int_{\partial B_{R_0 + 1/2}}\nu(y)\times H(y)G_k(x,y)dy - \int_{\partial B_{R_0 + 1/2}}\nu(y)\cdot E(y) \nabla_x G_k (x, y)dy,
\end{multline*}
and 
\begin{multline*}
H(x)=   \int_{\partial B_{R_0 + 1/2}} \nabla_x G_k (x, y) \times \big( \nu(y) \times H (y) \big)  dy \\[6pt]
    - i k  \int_{\partial B_{R_0 + 1/2}}\nu(y)\times E(y)G_k (x,y)dy - \int_{\partial B_{R_0 + 1/2}}\nu(y)\cdot H(y) \nabla_x G_k (x, y)dy,
\end{multline*}
where 
\begin{equation}\label{def-G}
G_{k}(x, y) = \frac{e^{i k |x - y|}}{4 \pi |x -y|} \mbox{ for }  x \neq y. 
\end{equation}
It follows that, for $R > R_0 + 1$, 
\begin{equation}\label{SC-1***}
\|(E, H)\|_{L^2(B_R\setminus D)} \leq C_R \|(E,  H)\|_{L^2(B_{R_0 + 1} \setminus D)}.
\end{equation}
Hence, it  suffices to prove \eqref{lem:ex0-conclusion-12*} for $R =R_0 + 1$ by contradiction.  Assume that there exist  sequences $(k_n) \subset (0, k_0)$, $\big((f_{1, n}, f_{2, n}) \big) \subset [L^2(\mR^3 \setminus D)]^6$ with support in $B_{R_0} \setminus D$,  and  $\big((E_n, H_n)\big) \subset [\cap_{R> 1}H(\curl, B_R \setminus D)]^2$ such that $\|(E_n,  H_n)\|_{L^2(B_{R_0  + 1}\setminus D)}  = 1$, 
\begin{equation}\label{lem:ex0-a}
\lim_{n \to + \infty}  \Big( \|(E_n \times \nu , H_n \times \nu)\|_{H^{-1/2}(\partial D)} +  \| (f_{1, n}, f_{2, n}) \|_{L^2} + k_n^{-1} \| ( \dive f_{1, n}, \dive f_{2, n}) \|_{L^2}  \Big) = 0, 
\end{equation}
and 
\begin{equation*}
\begin{cases}
\nabla \times E_n  =  i k_n  H_n + f_{1, n} & \text{ in } \mR^3\setminus \bar D,\\[6pt]
 \nabla \times H_n = -  ik_n  E_n + f_{2, n}   & \text{ in } \mR^3\setminus \bar D. 
\end{cases}
\end{equation*}

Without loss of generality, one might assume that $k_n \to k_*$ as $n \to + \infty$. Using Lemma~\ref{lem:compact},  \eqref{SC-1***}, and \eqref{lem:ex0-a},  one can assume that $(E_n, H_n)$ converges to $(E, H)$ in $[L^2(B_R \setminus D)]^6$. We first consider the case $k_* = 0$. We have   
\begin{equation}\label{lem:ex-12-1}
\begin{cases}
\nabla \times E = 0 & \text{ in } \mR^3\setminus \bar D,\\[6pt]
E \times \nu = 0 &\mbox{ on } \partial D, 
\end{cases}
\quad \quad 
\begin{cases}
\nabla \times H  = 0 & \text{ in } \mR^3\setminus \bar D,\\[6pt]
H \times \nu = 0 &\mbox{ on } \partial D, 
\end{cases}
\end{equation}
\begin{equation}\label{lem:ex-12-2}
\dive E  = 0   \text{ in } \mR^3\setminus \bar D \quad \dive H = 0   \text{ in } \mR^3\setminus \bar D, 
\end{equation}
and 
\begin{equation}\label{lem:ex-12-4}
|E(x)| = O(|x|^{-2}) \mbox{ and } \quad |H(x)| = O(|x|^{-2})  \mbox{ for large $x$}. 
\end{equation}
Assertion~\eqref{lem:ex-12-4} can be derived again from the Stratton-Chu formula using the fact that $\lim_{n \to + \infty} k_n = 0$. It follows from \eqref{lem:ex-12-1}, \eqref{lem:ex-12-2},  and \eqref{lem:ex-12-4} that (see, e.g., \cite[Lemma 3.5]{HM9}, \cite[Chapter I]{Girault}) 
\[
E = H = 0 \mbox{ in } \R^3\setminus D.
\]
We have a contradiction with the fact $\| (E_n, H_n) \|_{L^2(B_{R_0 + 1} \setminus D)} = 1$.  

We next consider the case $k_*>0$. In this case, we have $(E, H)$ satisfies the radiating condition and 
\begin{equation*}
\begin{cases}
\nabla \times E  =  i k_*  H   & \text{ in } \mR^3\setminus \bar D,\\[6pt]
 \nabla \times H = -  ik_*  E    & \text{ in } \mR^3\setminus \bar D, \\[6pt]
 E \times \nu = H \times \nu = 0 & \mbox{ on } \partial D. 
\end{cases}
\end{equation*}
One also reaches  $(E, H) = (0, 0)$ in $\mR^3 \setminus D$ and obtains a contradiction. 
\end{proof}

In the same spirit, we have 

\begin{lemma}
\label{lem:ex-12}
Let  $0< \rho <  \rho_0$,  $0<  \omega  < \omega_0$, $1/2 < r < 1$, and $R_0> 2$.  Let 
$h = (h_1, h_2)\in [H^{-1/2}(\dive_{\partial B_1}, \partial B_1)]^2$. Assume that  $(E, H) \in  [L^2_{\loc}(\mR^3 \setminus B_r)]^6$ with  $(E, H) \in [\cap_{R> 1}H(\curl, (B_R \setminus B_r) \setminus \partial B_1)]^2$ is a radiating solution  of
\begin{equation*}
\begin{cases}
\nabla \times E  = i \omega \tilde \mu_\rho H  & \text{ in } (\mR^3\setminus \bar B_r) \setminus \partial B_1,\\[6pt]
 \nabla \times H = -  i\omega \tilde \eps_\rho E  +  \tilde \sigma_\rho E  & \text{ in } (\mR^3\setminus \bar B_r) \setminus \partial B_1,\\[6pt]
[E\times \nu] = h_1, [H\times\nu] = h_2 &\mbox{ on } \partial B_1.
\end{cases}
\end{equation*}
We have, for $R>2$, 
\begin{multline*}
\|(E, H)\|_{L^2(B_R\setminus B_r)} \leq C_R\Big(\|(E \times \nu ,  H\times \nu)\|_{H^{-1/2}(\partial B_r)} + \|(h_1, h_2)\|_{H^{-1/2}( \partial B_1)} \\[6pt]
 + \omega^{-1} \|(\dive_{\partial B_1} h_1, \dive_{\partial B_1} h_2)\|_{H^{-1/2}(\partial B_1)}  \Big), 
\end{multline*}
for some positive constant $C_R$ independent of $(h_1, h_2)$, $(f_1, f_2)$, $\rho$, and $\omega$.  
\end{lemma}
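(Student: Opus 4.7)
The strategy mirrors the proof of Lemma~\ref{lem:ex-123}: first localize via the Stratton--Chu representation, then argue by contradiction using the compactness tool Lemma~\ref{lem:compact}.

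\textbf{Step 1 (Localization).} Outside $B_{R_0+1/2}$ the system reduces to the free Maxwell system with wavenumber $k=\omega\rho$, and $(E,H)$ is radiating. Applying Stratton--Chu on $\partial B_{R_0+1/2}$ (as in \eqref{SC-1***}) yields
\[
\|(E,H)\|_{L^2(B_R\setminus B_r)}\le C_R\,\|(E,H)\|_{L^2(B_{R_0+1}\setminus B_r)},
\]
so it suffices to prove the bound with $R=R_0+1$.

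\textbf{Step 2 (Contradiction setup and compactness).} Suppose for contradiction that there exist sequences $\rho_n\in(0,\rho_0)$, $\omega_n\in(0,\omega_0)$, $h_n=(h_{1,n},h_{2,n})$ and radiating solutions $(E_n,H_n)$ such that $\|(E_n,H_n)\|_{L^2(B_{R_0+1}\setminus B_r)}=1$ while
\[
\|(E_n\times\nu,H_n\times\nu)\|_{H^{-1/2}(\partial B_r)}+\|h_n\|_{H^{-1/2}(\partial B_1)}+\omega_n^{-1}\|\dive_{\partial B_1}h_n\|_{H^{-1/2}(\partial B_1)}\to 0.
\]
Pass to subsequences so that $\rho_n\to\rho_*\in[0,\rho_0]$ and $\omega_n\to\omega_*\in[0,\omega_0]$. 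From the Maxwell system, $\curl E_n$ and $\curl H_n$ are bounded in $L^2$, and $\dive(\tilde\eps_{\rho_n}E_n)$, $\dive(\tilde\mu_{\rho_n}H_n)$ are controlled via $-i\omega_n\dive(\tilde\eps_{\rho_n}E_n)+\dive(\tilde\sigma_{\rho_n}E_n)=0$ and $\dive(\tilde\mu_{\rho_n}H_n)=0$. Combined with the vanishing of the tangential traces on $\partial B_r$ and the smallness of the jumps on $\partial B_1$, Lemma~\ref{lem:compact} applied separately in $B_1\setminus B_r$ and $B_{R_0+1}\setminus\bar B_1$ yields an $L^2$-convergent subsequence with limit $(E,H)$ satisfying $\|(E,H)\|_{L^2(B_{R_0+1}\setminus B_r)}=1$.

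\textbf{Step 3 (Identifying the limit).} In the limit, $(E,H)$ satisfies the Maxwell system with coefficients $(\tilde\eps_{\rho_*},\tilde\mu_{\rho_*},\tilde\sigma_{\rho_*})$ at frequency $\omega_*$, with $E\times\nu=H\times\nu=0$ on $\partial B_r$, with $[E\times\nu]=[H\times\nu]=0$ on $\partial B_1$ (so $(E,H)\in H_{\loc}(\curl)$ across $\partial B_1$), and with the radiating behavior inherited through Stratton--Chu. When $\omega_*>0$, standard uniqueness for Maxwell equations (radiation and unique continuation, see e.g.\ \cite{TNguyen,BC}) gives $(E,H)\equiv 0$ in $\mR^3\setminus B_r$, contradicting the normalization.

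\textbf{Step 4 (The delicate case $\omega_*=0$, main obstacle).} This is where the frequency-weighted bound on $\dive_{\partial B_1}h_n$ is essential. Using the identity $\dive_{\Gamma}(u\times\nu)=-\nu\cdot\curl u$ together with the Maxwell system, one computes on $\partial B_1$
\[
\dive_{\partial B_1}h_{1,n}=-i\omega_n[\tilde\mu_{\rho_n}H_n\cdot\nu],\qquad \dive_{\partial B_1}h_{2,n}=i\omega_n[\tilde\eps_{\rho_n}E_n\cdot\nu]-[\tilde\sigma_{\rho_n}E_n\cdot\nu],
\]
so the hypothesis $\omega_n^{-1}\|\dive_{\partial B_1}h_n\|_{H^{-1/2}}\to 0$ forces the normal-jump quantities $[\tilde\mu_{\rho_n}H_n\cdot\nu]$, $[\tilde\eps_{\rho_n}E_n\cdot\nu]$ (and, at rate $\omega_n^{-1}$, the inner-side trace $\nu\cdot E_n|_{\partial B_1^-}$) to vanish in the limit. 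Passing $\omega_n\to 0$ in the PDE, the limit $(E,H)$ is curl-free in $\mR^3\setminus\bar B_1$, satisfies $\curl E=0$ and $\curl H=\tilde\sigma_{\rho_*}E$ in $B_1\setminus B_r$, is divergence-free in the appropriate weighted sense, has continuous tangential and normal $(\tilde\eps_{\rho_*},\tilde\mu_{\rho_*})$-weighted traces across $\partial B_1$, vanishing tangential traces on $\partial B_r$, and decays at infinity (from Stratton--Chu with $k=0$). Writing $E=\nabla\varphi_E$ and $H=\nabla\varphi_H$ in the exterior (which is simply connected) and integrating by parts against $(E,H)$ itself on $B_R\setminus B_r$ (as in Lemma~\ref{lem-T-1-***}), all boundary terms vanish by the jump analysis above, yielding $\int|\nabla\varphi_E|^2=\int|\nabla\varphi_H|^2=0$ and the analogous interior energy identity; hence $(E,H)\equiv 0$. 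This contradicts $\|(E,H)\|_{L^2}=1$ and completes the proof.
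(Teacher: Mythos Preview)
Your overall skeleton (Stratton--Chu localization, normalization, contradiction via compactness) matches the paper, but there is a genuine gap in the case analysis and in the compactness step.

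\medskip
\textbf{Wrong case split.} You split on $\omega_*$, but the degenerate case here is $\rho_*=0$: recall $(\tilde\eps_\rho,\tilde\mu_\rho)=(\rho I,\rho I)$ in $\mR^3\setminus B_1$, so when $\rho_n\to 0$ the exterior coefficients vanish. Thus even when $\omega_*>0$ the limiting exterior system is $\nabla\times E=\nabla\times H=0$, which is \emph{not} a standard radiating Maxwell system, and your appeal in Step~3 to ``standard uniqueness (radiation and unique continuation)'' does not apply. The paper splits on $\rho_*$; the case $\rho_*>0$ is indeed routine (as in Lemma~\ref{lem:ex-123}), while $\rho_*=0$ is the one that requires work, uniformly in $\omega_*$.

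\medskip
\textbf{Compactness is not justified as written.} In Step~2 you invoke Lemma~\ref{lem:compact} ``separately in $B_1\setminus B_r$ and $B_{R_0+1}\setminus\bar B_1$'' using only the smallness of the \emph{jumps} on $\partial B_1$. But Lemma~\ref{lem:compact} requires convergence of a one-sided trace (tangential or normal) in $H^{-1/2}$, and small jumps do not give that: $E_n\times\nu|_{\rm ext}=E_n\times\nu|_{\rm int}+h_{1,n}$ with $h_{1,n}\to 0$ still leaves $E_n\times\nu|_{\rm int}$ uncontrolled. The trace identities you record in Step~4 are exactly what is needed, but they must come \emph{before} the compactness, and they only yield the interior normal traces, not the exterior ones. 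Concretely, from
\[
(-i\omega_n+1)\,E_n\!\cdot\!\nu|_{\rm int}=-i\omega_n\rho_n\,E_n\!\cdot\!\nu|_{\rm ext}-\dive_{\partial B_1}h_{2,n},
\qquad
H_n\!\cdot\!\nu|_{\rm int}=\rho_n\,H_n\!\cdot\!\nu|_{\rm ext}-(i\omega_n)^{-1}\dive_{\partial B_1}h_{1,n},
\]
and $\rho_n\to 0$ one gets $E_n\!\cdot\!\nu|_{\rm int},\,H_n\!\cdot\!\nu|_{\rm int}\to 0$ in $H^{-1/2}(\partial B_1)$. This gives compactness (via Lemma~\ref{lem:compact}\,ii) only in $B_1\setminus B_r$; one then shows the interior limit is $0$, deduces (using \cite[Lemma~A1]{Haddar}) that $(E_n\times\nu,H_n\times\nu)|_{\rm int}\to 0$ in $H^{-1/2}(\partial B_1)$, hence the exterior tangential traces vanish, and finally applies Lemma~\ref{lem:ex-123} to force $\|(E_n,H_n)\|_{L^2(B_2\setminus B_1)}\to 0$. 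This interior-first bootstrap is the paper's route and is what your argument is missing; attempting to get exterior compactness directly does not work because none of the $\partial B_1$ exterior traces are known to converge a priori.
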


\begin{proof}  As argued in the proof of Lemma~\ref{lem:ex-123}, by Stratton-Chu's formulas,  it suffices to prove
\begin{multline*}
\|(E,  H)\|_{L^2(B_2\setminus B_r)} \le C_R \Big( \|(E \times \nu ,  H\times \nu)\|_{H^{-1/2}(\partial B_r)} + \|(h_1, h_2)\|_{H^{-1/2}( \partial B_r)}  \\[6pt]
 +   \omega^{-1} \|(\dive_{\partial B_1} h_1, \dive_{\partial B_1} h_2)\|_{H^{-1/2}(\partial B_1)}  \Big), 
\end{multline*} 
 by contradiction. Assume that there exist  sequences $(\omega_n) \subset (0, \omega_0)$, $\big((h_{1, n}, h_{2, n}) \big)\subset [H^{-1/2}(\dive_{\Gamma}, \partial B_1)]^2$, $\big((f_{1, n}, f_{2, n}) \big) \subset L^2(\mR^3 \setminus B_r)$ with support in $B_{1} \setminus B_r$,  and  $\big((E_n, H_n)\big) \subset [\cap_{R> 1}H(\curl, B_R \setminus D)]^2$ such that 
\begin{equation}\label{tttt-1}
\|(E_n,  H_n)\|_{L^2(B_{2}\setminus B_r)}  = 1,
\end{equation} 
\begin{multline}\label{tttt-111}
\lim_{n \to + \infty} \Big( \|(E_n \times \nu ,  H_n \times \nu)\|_{H^{-1/2}(\partial B_r)} + \|(h_{1, n} , h_{2, n})\|_{H^{-1/2}( \partial B_r)}  \\[6pt]
 + \omega_n^{-1} \|(\dive_{\partial B_1} h_{1, n}, \dive_{\partial B_1} h_{2, n})\|_{H^{-1/2}(\partial B_1)} \Big)= 0, 
\end{multline}
and 
\begin{equation*}
\begin{cases}
\nabla \times E_n  =  i \omega_n  \tilde \mu_{\rho_n}H_n  & \text{ in } (\mR^3\setminus \bar B_r)\setminus \partial B_1,\\[6pt]
 \nabla \times H_n = -  i\omega_n  \tilde \eps_{\rho_n} E_n  + \tilde \sigma_{\rho_n} E_n   & \text{ in } (\mR^3\setminus \bar B_r)\setminus \partial B_1
 ,\\[6pt]
[E_n \times \nu] = h_{1, n}, [H_n  \times\nu] = h_{2, n} &\mbox{ on } \partial B_1.
\end{cases}
\end{equation*}

Without loss of generality, one might assume that $\omega_n \to \omega_*$ and $\rho_n \to \rho_*$ as $n \to + \infty$. We first consider the case $\rho_* = 0$. Since, as $n \to + \infty$,  
$$
(- i \omega_n + 1)E_n \cdot \nu |_{int} = - i \omega_n \rho_n E_n \cdot \nu|_{ext}  -  \dive_{\partial B_1}h_{2, n}  \to 0 \mbox{ in } H^{-1/2}(\partial B_1)\quad 
$$
and 
$$ 
H_n \cdot \nu |_{int} = \rho_n H_n \cdot \nu|_{ext}  - (i \omega_n)^{-1} \dive_{\partial B_1} h_{1, n}  \to 0 \mbox{ in } H^{-1/2}(\partial B_1), 
$$
using \eqref{tttt-111} and applying Lemma~\ref{lem:compact},  one can assume that $(E_n,  H_n)$ converges to $(E, H)$ in $L^2(B_1 \setminus B_r)$. Moreover,  
\begin{equation*}
\begin{cases}
\dive E =  \dive  H = 0 & \text{ in } B_1 \setminus \bar B_r,\\[6pt]
E \times \nu =  H \times \nu = 0 &\mbox{ on } \partial B_r, \\[6pt]
E \cdot \nu =  H \cdot \nu = 0 &\mbox{ on } \partial B_1. 
\end{cases}
\end{equation*}
It follows that $(E, H) = (0, 0)$ in $B_1 \setminus B_r$.  We derive that 
\begin{equation}\label{tttt-2}
\lim_{n \to + \infty} \|(E_n, H_n) \|_{L^2(B_1 \setminus B_r)} = 0
\end{equation}
and, by  \cite[Lemma A1]{Haddar},
\begin{equation*}
\lim_{n \to + \infty}  \|(E_n \times \nu, H_n \times \nu) |_{\inte} \|_{H^{-1/2}(\partial B_1 )} = 0.   
\end{equation*}
This yields 
\begin{equation}\label{tttt-3}
\lim_{n \to + \infty} \|(E_n \times \nu, H_n \times \nu)|_{\ext} \|_{H^{-1/2}(\partial  B_1 )} = 0. 
\end{equation}
This in turn implies,  by Lemma~\ref{lem:ex-123},  that
\begin{equation}\label{tttt-3}
\lim_{n \to + \infty} \|(E_n , H_n) \|_{L^2(B_2 \setminus  B_1 )} = 0. 
\end{equation}
Combining \eqref{tttt-1}, \eqref{tttt-2}, and \eqref{tttt-3}, we obtain  a contradiction.

We next consider the case $\rho_* > 0$. The proof in this case is similar to the one in Lemma~\ref{lem:ex-123} and omitted (see also \cite[Lemma 4]{HM1} for the case $\omega_*>0$). 
\end{proof}

\begin{remark} \rm The proof gives the following slightly sharper estimate (for small  $\omega$):
\begin{multline}\label{lem:ex0-conclusion-t12}
\|(E, H)\|_{L^2(B_R\setminus B_r)} \leq C_R\Big(\|(E \times \nu ,  H\times \nu)\|_{H^{-1/2}(\partial B_r)} + \|(h_1, h_2)\|_{H^{-1/2}( \partial B_r)} \\[6pt]
 +  \|(\omega^{-1}\dive_{\partial B_1} h_1, \dive_{\partial B_1} h_2)\|_{H^{-1/2}(\partial B_1)}  \Big). 
\end{multline}

\end{remark}

We are ready to give the main result of this section:

\begin{lemma}\label{lem-lossy-low}
	Let $0 < \rho < \rho_0$ and $0 < \omega < \omega_0$, and let $h_1, h_2\in H^{-1/2}(\dive_{\partial B_1}, \partial B_{1})$. Let $(E_{\rho}, H_{\rho}) \in [\cap_{R> 1}H(\curl, B_R\setminus \partial B_{1})]^2$ be the unique radiating solution of 
	\begin{equation}\label{lem-lossy-low-e0}
	\begin{cases}
	\nabla\times E = i\omega \tilde \mu_{\rho} H &\mbox{ in } \R^3 \setminus \partial B_1,\\[6pt]
	\nabla\times  H = -i\omega \tilde \eps_{\rho} E + \tilde \sigma_{\rho} E & \mbox{ in } \R^3 \setminus B_1,\\[6pt]
	[E \times \nu] = h_1, \,  [H \times \nu] = h_2 & \mbox{ on } \partial B_{1}. 
	\end{cases}
	\end{equation} 
	We have 
	\begin{equation*}
	\|(E,  H) \|_{L^{2}(B_2\setminus B_{2/3})}   \leq C \Big( \|(h_1,  h_2)\|_{H^{-1/2}(\partial B_{1})} + \omega^{-1} \|(\dive_{\partial B_1}h_1,  \dive_{\partial B_1}h_2)\|_{H^{-1/2}(\partial B_{1})} \Big), 
		\end{equation*}
	for some positive constant $C$ depending only on $\rho_0$ and $\omega_0$. 
\end{lemma}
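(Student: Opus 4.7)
The plan is to argue by contradiction in the spirit of Lemmas~\ref{lem:ex-123} and \ref{lem:ex-12}. Suppose the conclusion fails. Then there exist sequences $(\omega_n)\subset(0,\omega_0)$, $(\rho_n)\subset(0,\rho_0)$, interface data $(h_{1,n},h_{2,n})$, and radiating solutions $(E_n,H_n)$ of \eqref{lem-lossy-low-e0} (with $\omega$, $\rho$ replaced by $\omega_n$, $\rho_n$) such that
$$\|(E_n,H_n)\|_{L^2(B_2\setminus B_{2/3})}=1,$$
while
$$\delta_n:=\|(h_{1,n},h_{2,n})\|_{H^{-1/2}(\partial B_1)}+\omega_n^{-1}\|(\dive_{\partial B_1}h_{1,n},\dive_{\partial B_1}h_{2,n})\|_{H^{-1/2}(\partial B_1)}\to 0.$$
After extraction, assume $\omega_n\to\omega_*\in[0,\omega_0]$ and $\rho_n\to\rho_*\in[0,\rho_0]$.

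Since $\tilde\eps_{\rho_n}$, $\tilde\mu_{\rho_n}$, $\tilde\sigma_{\rho_n}$ are uniformly bounded on $B_1\setminus B_{2/3}$ and on $B_R\setminus B_1$ for any fixed $R>2$, the Maxwell equations give
$$\|\nabla\times E_n\|_{L^2(B_1\setminus B_{2/3})}+\|\nabla\times H_n\|_{L^2(B_1\setminus B_{2/3})}\le C\|(E_n,H_n)\|_{L^2(B_1\setminus B_{2/3})}\le C,$$
and similarly on $B_R\setminus B_1$. In particular the interior traces $(E_n\times\nu,H_n\times\nu)|_{\partial B_{2/3}}$ are bounded in $H^{-1/2}(\partial B_{2/3})$. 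Applying Lemma~\ref{lem:ex-12} with $r=2/3$ then yields $\|(E_n,H_n)\|_{L^2(B_R\setminus B_{2/3})}\le C_R$ for every $R>2$. Extracting divergence information from the equations together with the weighted control of $\dive_{\partial B_1}h_{i,n}$ puts us in position to invoke Lemma~\ref{lem:compact} (condition $i)$) separately on each side of $\partial B_1$; a diagonal extraction produces a subsequence converging strongly in $L^2_{\loc}(\R^3\setminus\partial B_1)$ to some limit $(E,H)$.

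The limit $(E,H)$ satisfies the homogeneous transmission problem associated with \eqref{lem-lossy-low-e0} with the limiting parameters, with zero jumps across $\partial B_1$ (inherited from $\delta_n\to 0$), and the radiating condition at infinity. When $\omega_*>0$, the standard uniqueness theorem for radiating Maxwell solutions in the exterior forces $(E,H)=0$ in $\R^3\setminus B_1$; the vanishing traces on $\partial B_1$ then propagate inward by the unique continuation principle valid under \eqref{m-1}--\eqref{m-2} via \cite{TNguyen,BC}. When $\omega_*=0$, the weighting $\omega_n^{-1}\dive_{\partial B_1}h_{i,n}\to 0$ is exactly what forces the correct divergence-free conditions on $\tilde\eps_\rho E$ and $\tilde\mu_\rho H$ across $\partial B_1$ in the limit; the Stratton--Chu representation in the exterior yields $|E|+|H|=O(|x|^{-2})$, whence $(E,H)=0$ in the exterior as in the $k_*=0$ subcase of Lemma~\ref{lem:ex-123}, and the zero jump propagates inward by unique continuation. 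In every case, $(E,H)=0$ contradicts $\|(E,H)\|_{L^2(B_2\setminus B_{2/3})}=\lim_n\|(E_n,H_n)\|_{L^2(B_2\setminus B_{2/3})}=1$.

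The main technical obstacle is the doubly degenerate limit $\omega_*=\rho_*=0$, where both the interior and the exterior Maxwell operators lose their principal frequency scaling and reduce to static systems. Here the precise weighting $\omega_n^{-1}$ on $\dive_{\partial B_1}h_{i,n}$ is indispensable: it is what preserves the normal-trace transmission conditions in the limit and hence the applicability of the exterior decay argument. The remaining non-degenerate cases are largely routine perturbations of the proofs of Lemmas~\ref{lem:ex-123} and \ref{lem:ex-12}.
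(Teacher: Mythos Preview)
Your contradiction--compactness scheme differs from the paper's direct argument, and as written it does not close. The paper never runs a compactness argument here; instead it exploits the lossy layer directly. Multiplying the first equation by $\tilde\mu_\rho^{-1}\nabla\times\bar E$, integrating over $B_R\setminus\partial B_1$, taking the imaginary part, and sending $R\to\infty$ gives the absorption identity
\[
\int_{B_1\setminus B_{1/2}}|E|^2\,dx \;\le\; \Big|\int_{\partial B_1} h_2\cdot\bar E|_{\ext}-\bar h_1\cdot H|_{\inte}\,ds\Big|.
\]
The right-hand side is then bounded via Lemmas~\ref{lem-T-1-***} and \ref{lem-T-1}, interior elliptic regularity for $\Delta E+(\omega^2-i\omega)E=0$ in the lossy shell produces trace control on $\partial B_{2/3}$, and Lemma~\ref{lem:ex-12} with $r=2/3$ closes the estimate after absorption.

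The gap in your argument is precisely the absence of this absorption identity. Your normalization $\|(E_n,H_n)\|_{L^2(B_2\setminus B_{2/3})}=1$ yields compactness only in $\R^3\setminus B_{2/3}$; you have no a~priori bound on $(E_n,H_n)$ in $B_{2/3}\setminus B_{1/2}$, let alone in $B_{1/2}$. Consequently the limit $(E,H)$ is a solution of the transmission system only in $\R^3\setminus\bar B_{2/3}$, with no boundary condition whatsoever on $\partial B_{2/3}$. Such a problem has infinitely many solutions, so you cannot conclude $(E,H)=0$. Your line ``the standard uniqueness theorem for radiating Maxwell solutions in the exterior forces $(E,H)=0$ in $\R^3\setminus B_1$'' is not correct: the exterior radiating problem with no Dirichlet or Neumann condition on $\partial B_1$ is underdetermined, and the vanishing of the \emph{jumps} $[E\times\nu]$, $[H\times\nu]$ across $\partial B_1$ does not make the \emph{traces} vanish. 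Unique continuation then has nothing to propagate.

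The fix is to use the lossy layer: the absorption identity above, applied to $(E_n,H_n)$ and combined with Lemma~\ref{lem-T-1}, gives $\|E_n\|_{L^2(B_1\setminus B_{1/2})}^2\le C\delta_n\to 0$, and then the equations force $\|H_n\|_{L^2}$ small nearby as well. Once you have that, a contradiction argument becomes viable, but at that point the paper's direct route (Lemma~\ref{lem:ex-12} plus absorption) is shorter and yields the quantitative bound without passing to a subsequence.
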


\begin{proof} Multiplying  the first equation of \eqref{lem-lossy-low-e0} by $\tilde \mu_\rho^{-1}\nabla \times \bar E$ and integrating over $B_R \setminus \partial B_1$, we have, for $R> 1$, 
	\begin{multline*}
	\nonumber\int_{B_R \setminus \partial B_1} \tilde \mu_\rho^{-1}\nabla \times E  \cdot \nabla \times \bar E \, dx = i\omega\int_{B_R \setminus \partial B_1}H \cdot \nabla\times \bar E\, dx \\[6pt]
	 = i\omega\int_{B_R \setminus \partial B_1} ( -i\omega \tilde \eps_\rho E +\tilde \sigma_\rho E )  \cdot \bar E\, dx + i\omega \int_{\partial B_R} (H \times \nu)  \cdot \bar E \, dx   \\[6pt]
	 - i\omega   \int_{\partial B_1} (H \times \nu)|_{\ext} \cdot \bar E |_{\ext} -  (H \times \nu)|_{\inte} \cdot \bar E |_{\inte}.
	\end{multline*}
	Using the definition of  $\tilde \sigma_\rho$ and considering  the imaginary part, we have 
	\begin{equation}\label{lem-lossy-low-e3.2}
	\int_{B_1\setminus B_{1/2}}|E|^2\,dx  = \Re \left( \int_{\partial B_1}h_{2}\cdot \bar E |_{\ext}  \, dx   - \bar h_{1}\cdot H |_{\inte} \, d x \right) - \Re\int_{\partial B_R} (H\times \nu) \cdot \bar E\, dx.
	\end{equation}
		Letting $R\to \infty$ and  using the radiation condition, we derive from \eqref{lem-lossy-low-e3.2} that 
	\begin{align}\label{lem-lossy-low-e5.1}
	\int_{B_1\setminus B_{1/2}}|E|^2\,dx  &\le   \left|\int_{\partial B_1} h_{2} \cdot \bar E |_{\ext} - \bar h_{1} H |_{\inte} \, ds\right|\\
	\nonumber &  \le   \left|\int_{\partial B_1} h_{2} \cdot \bar E |_{\ext}\right| + \left|\int_{\partial B_1}\bar h_{1}\cdot H |_{\ext} \, ds\right| + \left|\int_{\partial B_1}(\bar h_{1}\times \nu)\cdot h_2 \, ds\right|.
	\end{align}
Applying Lemma \ref{lem-T-1-***} with $D  = B_{2} \setminus B_{1}$, we have
\begin{equation}\label{lem-lossy-low-e5.2}
 \left|\int_{\partial B_1}h_{2}\cdot \bar E |_{\ext} \, ds\right|\leq  C \omega \|(E,  H)\|_{L^2(B_2\setminus B_{1})} \Big(\|h_2\|_{H^{-1/2}(\partial B_1)}+ \omega^{-1}\|\dive_{\Gamma} h_2\|_{H^{-1/2}(\partial B_1)}\Big)
\end{equation}
and
\begin{equation}\label{lem-lossy-low-e5.2-*}
 \left|\int_{\partial B_1}h_{1}\cdot \bar H |_{\ext}  \, ds\right|\leq  C  \omega\|( E,  H)\|_{L^2(B_2\setminus B_{1})} \Big(\|h_1\|_{H^{-1/2}(\partial B_1)}+ \omega^{-1}\|\dive_{\Gamma} h_1\|_{H^{-1/2}(\partial B_1)}\Big). 
\end{equation}
Applying Lemma \ref{lem-T-1}, we obtain
\begin{equation}\label{lem-lossy-low-e5.2-**}
\left|\int_{\partial B_1}(\bar h_{1}\times\nu)\cdot h_2 \, ds\right|\leq  C \|(h_1, h_2)\|^2_{H^{-1/2}(\dive_{\partial B_1}, \partial B_1)}. 
\end{equation}
Denote
\[
M = \|(h_1, h_2)\|_{H^{-1/2}(\partial B_1)}+ \omega^{-1}\|(\dive_{\Gamma} h_1, \dive_{\Gamma} h_2)\|_{H^{-1/2}(\partial B_1)}.
\]
Combining  \eqref{lem-lossy-low-e5.1},  \eqref{lem-lossy-low-e5.2}, \eqref{lem-lossy-low-e5.2-*} and \eqref{lem-lossy-low-e5.2-**} yields 
\begin{equation}\label{lem-lossy-low-e5.3}
	\int_{B_1\setminus B_{1/2}}|E|^2\,dx  \le \\ C \Big(\omega M \| (E, H) \|_{L^2(B_2 \setminus B_{1})} + M^2 \Big).
	\end{equation}
From the equations of $(E, H)$ in $B_{1} \setminus B_{1/2}$, we have 
\begin{equation*}
\Delta E + \omega^2E - i \omega E = 0 \mbox{ in } B_1\setminus B_{1/2}.
\end{equation*}
It follows from \eqref{lem-lossy-low-e5.3} that
\begin{equation}\label{lem-lossy-low-e5.4}
\|  E\|_{L^2(\partial B_{2/3})}^2 + \| \nabla E\|_{L^2(\partial B_{2/3})}^2 \le  \\ C \Big(\omega M\| (E,  H) \|_{L^2(B_2 \setminus B_{1})} + M^2 \Big), 
\end{equation}
which yields 
\begin{equation}\label{lem-lossy-low-e5.5}
\| (E, H)\|_{L^2(\partial B_{2/3})}^2 \le  C \Big( \omega^{-1} M\| (E,  H) \|_{L^2(B_2 \setminus B_{1})}  + \omega^{-2 }M^2\Big).
\end{equation}
Using \eqref{lem-lossy-low-e5.5} and applying Lemma~\ref{lem:ex-12} with $r = 2/3$, we derive that 
\begin{equation*}
\|(E, H)\|_{L^2(B_R \setminus  B_{2/3})}^2 \le C  \Big( \omega^{-1}M\| (E,  H) \|_{L^2(B_2 \setminus B_{1})}  + \omega^{-2}M^2\Big), 
\end{equation*}
and the conclusion follows. 
\end{proof}

We end this subsection with  

\begin{lemma}
	\label{lem-FF-1}
	Let $0< \rho < 1$ and $\rho\omega < k_0$, and 
	let $D \subset B_1$ be a smooth, open subset of $\mR^3$. 
	Assume that $(E, H)\in [\cap_{R > 2}H(\curl, B_R\setminus D)]^2$ is a radiating solution to the system
	\[
	\begin{cases}
	\nabla\times E = i\omega\rho H & \mbox{ in } \R^3\setminus D,\\[6pt]
	\nabla\times H = -i\omega\rho E & \mbox{ in } \R^3\setminus D.
	\end{cases}
	\]
	We have,  for $R \ge 1$ and $x\in B_{3R/\rho}\setminus B_{2R/\rho}$, 
	\begin{equation}\label{lem-FF-1-c}
	|E(x), H(x)|\leq C_R \rho^3   (\omega^2 + 1) \|(E, H)\|_{L^2(B_2\setminus D)}, 
	\end{equation}
	for some positive constant $C$ depending only on  $k_0$ and $R$.
\end{lemma}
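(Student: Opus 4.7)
\emph{Proof plan.} The plan is to exploit the explicit vector spherical harmonic (multipole) expansion of radiating Maxwell fields outside a ball, coupled with interior elliptic regularity on $B_2\setminus D$. Since $D\subset B_1$, the shell $B_{7/4}\setminus B_{5/4}$ is compactly contained in $B_2\setminus D$, and on it $(E,H)$ solves the source-free Maxwell system with wavenumber $k:=\omega\rho\leq k_0$. Standard interior regularity (via the vector Helmholtz equation satisfied by each component, with constants uniform for $k\in[0,k_0]$) first yields
\[
 \|(E,H)\|_{L^2(\partial B_{3/2})}\leq C\,\|(E,H)\|_{L^2(B_2\setminus D)}.
\]
On the exterior region $\{|x|>3/2\}$, the free radiating field admits the standard expansion (see, e.g., \cite[Theorem~2.50]{Kirsch})
\[
 E=\sum_{n\geq 1,\,|m|\leq n}\bigl(A_{nm}M_{nm}+B_{nm}N_{nm}\bigr),\qquad H=-i\sum_{n,m}\bigl(A_{nm}N_{nm}+B_{nm}M_{nm}\bigr),
\]
where $M_{nm}(x)=h_n^{(1)}(kr)X_{nm}(\hat x)$ with $X_{nm}$ the standard tangent vector spherical harmonic, and $N_{nm}=k^{-1}\nabla\times M_{nm}$. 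The sum begins at $n=1$ because Maxwell fields carry no monopole mode.

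Using the $L^2(\partial B_{3/2})$-orthogonality of the vector spherical harmonics together with the small-argument asymptotics of $h_n^{(1)}$, which produce, up to $n$-dependent constants, $\|M_{nm}\|_{L^2(\partial B_{3/2})}\geq c_n\,k^{-(n+1)}$ and $\|N_{nm}\|_{L^2(\partial B_{3/2})}\geq c_n\,k^{-(n+2)}$ uniformly for $k\leq k_0$, the interior estimate converts into the coefficient bound
\[
 |A_{nm}|+|B_{nm}|\leq C_n\,k^{n+2}\,\|(E,H)\|_{L^2(B_2\setminus D)}.
\]
The extra factor $k$ on $A_{nm}$ beyond the naive $k^{n+1}$ bound is the crucial gain: it arises by testing $A_{nm}$ against the dominant $N_{nm}$ term in the expansion of $H$ rather than the smaller $M_{nm}$ term in $E$. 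This is precisely why the bound involves the full norm $\|(E,H)\|$ instead of just $\|E\|$, and it is the ultimate source of the extra power of $\rho$ in the final estimate.

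At a far point $|x|=r\in(2R/\rho,3R/\rho)$ one has $kr=\omega\rho\cdot r\sim\omega R$, and the Hankel bounds give $|M_{nm}(x)|+|N_{nm}(x)|\leq C_{n,R}\max\bigl((kr)^{-(n+2)},(kr)^{-1}\bigr)$. Combined with the coefficient bound, the dipole contribution $n=1$ is controlled by $C_R\rho^3(\omega^2+1)\|(E,H)\|_{L^2(B_2\setminus D)}$: in the near zone $\omega R\leq 1$ one obtains $k^3/(kr)^3\sim\rho^3/R^3$, while in the radiation zone $\omega R\geq 1$ one obtains $k^3/(kr)\sim\omega^2\rho^3/R$, and both are dominated by $\rho^3(\omega^2+1)$ for $R\geq 1$. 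Higher modes $n\geq 2$ pick up additional factors of $\rho/R$ and sum to a convergent series dominated by the $n=1$ term; the same bound for $H$ follows identically. The main obstacle will be the uniform handling of the Hankel asymptotics across the transition $\omega R\sim 1$ between near and radiation zones, and verifying that the multipole series converges with constants uniform in $(\omega,\rho)$ in the regime $\omega\rho<k_0$; the minor technical point in the interior regularity step is to check that the constants remain uniform for $k\in[0,k_0]$, which is routine but must be stated carefully.
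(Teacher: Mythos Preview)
Your plan is correct and leads to the same bound, but it proceeds by a genuinely different mechanism than the paper's proof. The paper does not use the multipole expansion at all: it writes $E(x)$ via the Stratton--Chu formula over $\partial B_1$ and then Taylor-expands the kernel, replacing $G_k(x,y)$ and $\nabla_x G_k(x,y)$ by their values at $y=0$ plus remainders of size $(1+\omega)\rho^2$ and $(1+\omega^2)\rho^3$ respectively at $|x|\sim R/\rho$. The leading terms then involve only the three averages $\int_{\partial B_1}E\times\nu$, $\int_{\partial B_1}H\times\nu$, $\int_{\partial B_1}E\cdot\nu$. The last of these vanishes exactly (divergence theorem), while the first two are shown to be $O(\omega\rho)\|(E,H)\|_{L^2(B_2\setminus D)}$ by solving an auxiliary absorbing interior problem on $B_1$ with the same tangential trace and invoking the well-posedness estimate from Lemma~\ref{lem-T-1-***}. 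This $O(\omega\rho)$ smallness of the tangential averages is the exact coordinate-space counterpart of your ``crucial gain'' (the extra factor $k$ on $A_{nm}$ obtained by reading it off from $H$ rather than $E$): both express that a radiating Maxwell field has no monopole and that the dipole moment is controlled only through the full pair $(E,H)$.

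What the two routes buy: the paper's argument is shorter and avoids all special-function machinery---no Hankel asymptotics, no mode-by-mode series, no near/radiation-zone split---at the price of the slightly opaque auxiliary-problem trick to get $\int E\times\nu=O(k)$. Your approach is more transparent about why $\rho^3$ (dipole decay) is the right power and why the full $\|(E,H)\|$ norm must appear on the right, but you do have to discharge the uniform Hankel bounds across the transition $\omega R\sim 1$ and the series convergence with $n$-uniform constants; these are routine but add length. Either argument is acceptable.
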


\begin{proof} We only prove \eqref{lem-FF-1-c} for $E$, the proof $H$ is similar. By Stratton-Chu's formula, we have,  for $x \in \mR^3 \setminus \bar B_1$,  
\begin{multline}\label{SC-E-FF}
E(x)=   \int_{\partial B_1} \nabla_x G_k (x, y) \times \big( \nu(y) \times E (y) \big)  dy \\[6pt]
    + i \omega \rho \int_{\partial B_1}\nu(y)\times H(y)G_k(x,y)dy - \int_{\partial B_1}\nu(y)\cdot E(y) \nabla_x G_k(x, y)dy,
\end{multline}
where $k = \omega\rho$ and $G_{k}$ is given in \eqref{def-G}.

Let $(\widetilde E, \widetilde H)\in [H(\curl, B_1)]^2$ be the unique solution to the system
\begin{equation}
\label{FF-3}
\begin{cases}
\nabla\times \widetilde E = i\omega\rho (1 + i) \widetilde H & \mbox{ in } B_1,\\[6pt]
\nabla\times \widetilde H = -i\omega\rho (1 + i) \widetilde E & \mbox{ in } B_1, \\[6pt]
\widetilde E\times \nu = E\times \nu & \mbox{ on } \partial B_1. 
\end{cases}
\end{equation}
By a contradictory  argument, see, e.g.,  \cite{HM9} (see also the proof of Lemma~\ref{lem-lossy-low}),  we obtain
\begin{equation}\label{FF-4}
 \|(\widetilde E,  \widetilde H)\|_{L^2(B_1)} \leq C \|E\times \nu_{\ext}, H \cdot \nu|_{\ext}\|_{H^{-1/2}(\partial B_1)}.
\end{equation}
Since
\[\left|\int_{\partial B_1}E\times \nu \,ds\right|= \left|\int_{\partial B_1}\widetilde E\times \nu \,ds\right| = \left|\int_{B_1}\nabla\times \widetilde E \, dx\right| = \left|\int_{B_1} \omega\rho (1 + i) \widetilde H dx\right|, \]
we obtain
\begin{equation}\label{FF-1-p1}
\left| \int_{\partial B_1} E \times \nu \, ds\right|  \le C \omega \rho \| (E, H) \|_{L^2(B_2\setminus D)}. 
\end{equation}
Similarly, we have 
\begin{equation}\label{FF-1-p2}
\left| \int_{\partial B_1} H \times \nu \, ds\right|  \le C \omega \rho \| (E, H) \|_{L^2(B_2\setminus D)}. 
\end{equation}

One has 
\begin{equation}\label{FF-2}
\int_{\partial B_1} \nu \cdot E\, ds = \frac{1}{i \omega \rho} \int_{\partial B_1} \nu \cdot \nabla \times H \,ds= 0. 
\end{equation}
Rewrite  \eqref{SC-E-FF} under the form  
\begin{align*}
& E(x)= \\[6pt] 
&   \int_{\partial B_1} \nabla_x G_k (x, 0) \times \big( \nu(y) \times E (y) \big)  dy  +  \int_{\partial B_1} \big(\nabla_x G_k (x, y) -  \nabla_x G_k (x, 0) \big) \times \big( \nu(y) \times E (y) \big)  dy  \\[6pt]
    & + i k \int_{\partial B_1}\nu(y)\times H(y) G_k(x,0)dy  + i k \int_{\partial B_1}\nu(y)\times H(y) \big( G_k(x,y) - G_k(x,0) \big)dy \\[6pt]
  & - \int_{\partial B_1}\nu(y)\cdot E(y) \nabla_x  G_k(x, 0) dy -  \int_{\partial B_1}\nu(y)\cdot E(y) \big( \nabla_x  G_k(x, y) - \nabla_x G(x, 0)  \big) dy. 
\end{align*}
Using the facts, for $|x| \in (2R/ \rho, 3R / \rho)$ and $y \in \partial B_1$,  
$$
|G_{k}(x, y)  -  G_k(x, 0)|  \le C (1 + \omega) \rho^2, \quad |\nabla G_{k}(x, y)  - \nabla G_k(x, 0)| \le C (1 + \omega^2) \rho^3,
$$
$$
\|E\|_{L^2(\partial B_1)} \leq C \|E\|_{L^2(B_2\setminus D)},  \quad \mbox{ and } \quad 
\|H\|_{L^2(\partial B_1)} \leq C \|H\|_{L^2(B_2\setminus D)}, 
$$
we derive the conclusion from \eqref{FF-1-p1}, \eqref{FF-1-p2}, and  \eqref{FF-2}. 
\end{proof}

\subsubsection{Proof of Proposition \ref{pro-low}} Applying Lemma \ref{lem-lossy-low} to $(\tbE_{\rho}, \tbH_{\rho})$, defined in \eqref{eq-tEH-1}, we have 
	\begin{equation}\label{lem-low-e2}
	\|(\tbE_{\rho},  \tbH_{\rho})\|_{L^{2}(B_{2} \setminus B_1)} 
	\leq  C \omega^{-1}  \|\big(\EEE(\rho\, .) ,  \HHH(\rho\,.) \big) \|_{L^2(\partial B_1)}.
	\end{equation}
Since $\dive \JJJ = 0$, we have 
$$
\Delta \EEE + \omega^2 \EEE = - i \omega \JJJ \mbox{ in } \mR^3. 
$$
It follows that,  for $x\in B_2$, 
	\begin{equation}\label{STST}
	\EEE(x) = -  i \omega\int_{\R^3}\JJJ(y) G_{\omega}(x, y)\, dy  \quad \mbox{ and } \quad 
		\HHH(x) = -  \nabla_x\times \int_{\R^3} \JJJ(y) G_{\omega}(x, y)\, dy.
	\end{equation}
This yields
\begin{equation}\label{lem-low-e4}
	\| \big(\EEE(\rho .),  \HHH(\rho .) \big)\|_{L^{\infty}(\partial B_1)}\leq C  \|\JJJ\|_{L^2(\R^3)}.
	\end{equation}
	From \eqref{lem-low-e2} and \eqref{lem-low-e4}, we obtain 
		\begin{equation}
			\|(\tbE_{\rho},  \tbH_{\rho})\|_{L^2(B_{2} \setminus B_1)} \leq C  \omega^{-1} \|\JJJ\|_{L^2(\R^3)}.
		\end{equation}
	Applying Lemma \ref{lem-FF-1} to $(\tbE_{\rho}, \tbH_{\rho})$, we have, for $x  \in B_{3r/ \rho} \setminus B_{2 r/ \rho}$,  
	 \begin{equation*}
	\Big|\big(\tbE_{\rho}(x), \tbH_{\rho}(x) \big) \Big|  \le C_r \omega^{-1} \rho^3  \| \JJJ \|_{L^2(\R^3)} \mbox{ for } r> 1/ 2, 
	 \end{equation*}
Since $(\EEE_{\rho}, \HHH_{\rho}) - (\EEE, \HHH) = (\tbE_{\rho}, \tbH_{\rho}) (\rho^{-1} \, \cdot \, )$ in $\R^3\setminus B_2$, the conclusion follows. 
\qed

%\medskip

\subsection{Moderate and high frequency analysis - Proof of Proposition~\ref{prop-av}}\label{sc-av}
This section contains two subsections. In the first, we present several lemmas used in the proof of Proposition~\ref{prop-av} and in the second, the proof of Proposition \ref{prop-av} is given. 

\subsubsection{Some useful lemmas}  The main result of this subsection is Lemma \ref{lem-lossy} which is analogous  to Lemma \ref{lem:ex-12} though for  the moderate and high frequency regime. We begin with 

\begin{lemma}\label{lem-mor}
	Let $\omega > \omega_0$, and let $\Omega$ be a {\bf convex}, bounded subset of $\R^3$ of class $C^1$. Let $j \in H(\dive, \Omega)$, and let $u\in H(\curl, \Omega) \cap H(\dive, \Omega)$ be such that 
	\begin{equation}\label{lem-moz-e1}
		\nabla\times (\nabla\times u) - \omega^2 u =  j \mbox{ in } \Omega,
	\end{equation}
	and $u\cdot \nu, \,  (\nabla\times u)\cdot \nu \in L^2(\partial \Omega)$. Then
	\begin{multline}\label{lem-mor-claim-*}
	\|(\omega u\times\nu, (\nabla\times u)\times \nu)\|_{L^2(\partial \Omega)} \\[6pt]
	\leq C\Big( \|(\omega u, \nabla\times u)\|_{L^2(\Omega)}
	 + \|(\omega u\cdot\nu, (\nabla \times u) \cdot \nu)\|_{L^2(\partial \Omega)}  + \|j\|_{L^2(\Omega)} + \omega^{-1}\|\dive j\|_{L^2(\Omega)}\Big) ,
	\end{multline}
	for some positive constant $C$ depending only on $\Omega$ and $\omega_0$.
\end{lemma}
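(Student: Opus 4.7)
The plan is to run a Rellich--Morawetz multiplier identity on the second-order equation $\nabla\times(\nabla\times u)-\omega^2 u=j$. Since $\Omega$ is convex and $C^1$, after translating the origin to an interior point we may choose the radial multiplier $\beta(x)=x$, which satisfies $\beta\cdot\nu\ge c_0>0$ on $\partial\Omega$ for some constant $c_0=c_0(\Omega)>0$. One preparatory observation drives the $\dive j$-dependence: taking the divergence of the equation gives $\omega^2\dive u=-\dive j$ in $\Omega$, so
\begin{equation*}
\|\dive u\|_{L^2(\Omega)}\le \omega^{-2}\|\dive j\|_{L^2(\Omega)}.
\end{equation*}
This is how the factor $\omega^{-1}\|\dive j\|_{L^2}$ (rather than $\|\dive j\|_{L^2}$) will appear on the right.

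\textbf{The identity.} I would multiply the equation componentwise by $\overline{(\beta\cdot\nabla) u+\tfrac12 u}$, integrate over $\Omega$, and take real parts. Writing $\nabla\times(\nabla\times u)=\nabla(\dive u)-\Delta u$ reduces the vector principal part to a Laplacian plus a $\nabla(\dive u)$ correction. Integration by parts on the $-\Delta u$ piece is the classical scalar Morawetz computation applied to each $u_k$, and yields on the boundary the familiar quadratic form $(\beta\cdot\nu)(|\nabla u|^2-\omega^2|u|^2)+2\Re[(\beta\cdot\nabla\bar u)\cdot\partial_\nu u]$, plus an interior quadratic term absorbed into $\|(\omega u,\nabla\times u)\|_{L^2(\Omega)}^2$ (together with $\|\dive u\|_{L^2}^2$, controlled by the preparatory estimate). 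The correction from $\nabla(\dive u)$ is handled by one more integration by parts: it turns into a boundary term $(\beta\cdot\nabla\bar u)\cdot\nu\,\dive u$ plus interior integrands bounded by $\|\dive u\|_{L^2}\cdot\|(\omega u,\nabla\times u)\|_{L^2}\lesssim \omega^{-2}\|\dive j\|_{L^2}\cdot\|(\omega u,\nabla\times u)\|_{L^2}$. The source term on the right, $\Re\int_\Omega j\cdot\overline{(\beta\cdot\nabla)u+\tfrac12 u}$, is bounded by $C\|j\|_{L^2}\|(\omega u,\nabla\times u)\|_{L^2}$ after another integration by parts on the $(\beta\cdot\nabla)u$ factor to trade the derivative for $\dive j$; this is where the $\|j\|_{L^2}$ and a further $\omega^{-1}\|\dive j\|_{L^2}$ contribution arise.

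\textbf{From the Rellich identity to the stated estimate.} At the boundary we decompose $\nabla u=(\nabla_T u)+(\partial_\nu u)\nu$ and use the orthogonal decomposition $u=(u\cdot\nu)\nu+(\nu\times u)\times\nu$, together with the pointwise identity $|\nabla\times u|^2+(\dive u)^2=\sum_{k,\ell}|\partial_k u_\ell|^2$, to rewrite the Rellich boundary quadratic form in the Maxwell-adapted variables $\omega u\times\nu$, $(\nabla\times u)\times\nu$, $\omega u\cdot\nu$, $(\nabla\times u)\cdot\nu$. The identity then reads schematically
\begin{equation*}
\int_{\partial\Omega}(\beta\cdot\nu)\bigl[|\omega u\times\nu|^2+|(\nabla\times u)\times\nu|^2\bigr]\,dS
\;\lesssim\; \int_{\partial\Omega}(\beta\cdot\nu)\bigl[|\omega u\cdot\nu|^2+|(\nabla\times u)\cdot\nu|^2\bigr]\,dS +\mathcal R,
\end{equation*}
where $\mathcal R$ is controlled by the interior $L^2$ norms of $\omega u$ and $\nabla\times u$, plus $\|j\|_{L^2}^2$ and $\omega^{-2}\|\dive j\|_{L^2}^2$. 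Since $\beta\cdot\nu\ge c_0$, the left-hand side dominates $c_0\|(\omega u\times\nu,(\nabla\times u)\times\nu)\|_{L^2(\partial\Omega)}^2$. An application of Young's inequality to absorb mixed terms then gives the stated bound.

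\textbf{Main obstacle.} The delicate part is the bookkeeping of the $\nabla(\dive u)$ correction and of the source manipulation so that $\dive j$ enters only at the scale $\omega^{-1}$, not $\omega^{-2}$ or $1$. The other subtlety is the boundary reorganization from raw tangential/normal derivatives of the Cartesian components $u_k$ into the intrinsic quantities $u\times\nu$ and $(\nabla\times u)\times\nu$; this uses surface-integration-by-parts on the closed manifold $\partial\Omega$ together with the identity relating $|\nabla u|^2$, $|\nabla\times u|^2$, $|\dive u|^2$, and pays attention to the curvature terms that appear when $\beta=x$ is restricted to $\partial\Omega$. The convexity of $\Omega$ is essential precisely at this stage: it ensures the curvature correction has a favorable sign, so nothing spoils the lower bound $\beta\cdot\nu\ge c_0$.
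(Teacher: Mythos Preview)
Your proposal shares the paper's core idea---a Morawetz--Rellich multiplier with $x\cdot\nu\ge c_0>0$ coming from convexity---but the specific multiplier you choose creates a concrete gap in the source term step. You propose to bound $\Re\int_\Omega j\cdot\overline{(x\cdot\nabla)u}$ ``after another integration by parts \ldots\ to trade the derivative for $\dive j$.'' That does not work: integrating $\int_\Omega j_k\,x_\ell\,\partial_\ell\bar u_k$ by parts in $x_\ell$ produces $-\int_\Omega\bigl[3j+(x\cdot\nabla)j\bigr]\cdot\bar u$ plus a boundary term, i.e.\ the full directional derivative $(x\cdot\nabla)j$, not $\dive j$. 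The alternative direct bound $\bigl|\int_\Omega j\cdot(x\cdot\nabla)\bar u\bigr|\le C\|j\|_{L^2}\|\nabla u\|_{L^2}$ needs the full gradient $\|\nabla u\|_{L^2(\Omega)}$, which is \emph{not} controlled by $\|\nabla\times u\|_{L^2}+\|\dive u\|_{L^2}$ under the stated hypotheses (only $u\in H(\curl)\cap H(\dive)$ with $u\cdot\nu,(\nabla\times u)\cdot\nu\in L^2(\partial\Omega)$). The same issue bites in your $\nabla(\dive u)$-correction step, where the boundary term $\dive u\,\bigl[(x\cdot\nabla)\bar u\bigr]\cdot\nu$ involves traces not assumed to exist.

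The paper avoids both problems by choosing the Maxwell-adapted multiplier $(\nabla\times\bar u)\times x$ rather than the componentwise scalar one $(x\cdot\nabla)\bar u+\tfrac12\bar u$. With this choice the source contribution is $\int_\Omega j\cdot\bigl((\nabla\times\bar u)\times x\bigr)$, which is bounded directly by $C\|j\|_{L^2}\|\nabla\times u\|_{L^2}$; no full gradient of $u$ ever appears. Moreover, the computation (via the identity $\nabla\times(v\times x)=-x\times(\nabla\times v)+v+\nabla(v\cdot x)-x\,\dive v$) delivers the boundary integrand already in the form $(\bar u\times\nu)\cdot(u\times x)$, $(\bar u\cdot\nu)(u\cdot x)$ and the analogous terms for $\nabla\times u$, so the ``delicate boundary reorganization'' you anticipate is simply absent. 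Finally, the paper does not use your preparatory estimate $\|\dive u\|_{L^2}=\omega^{-2}\|\dive j\|_{L^2}$ to carry the general case; instead it reduces to $\dive j=0$ by setting $\tilde u=u-\omega^{-2}\nabla\phi$ with $\Delta\phi=\dive j$, $\phi|_{\partial\Omega}=0$, and uses elliptic $H^2$-regularity for $\phi$ to control $\omega^{-1}\|\nabla\phi\times\nu\|_{L^2(\partial\Omega)}\le C\omega^{-1}\|\dive j\|_{L^2}$.
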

\begin{proof} The analysis is based on the multiplier technique. We first consider $\dive j = 0$.   Multiplying \eqref{lem-moz-e1} by $(\nabla\times \bar u)\times x$ and integrating over $\Omega$, we obtain
	\begin{multline}
		\label{lem-moz-e3}
		\int_{\Omega}j \cdot (\nabla\times \bar u)\times x\, dx = \int_{\Omega} \nabla\times(\nabla\times u)\cdot (\nabla\times \bar u)\times x\, dx - \omega^2\int_{\Omega}u \cdot (\nabla\times \bar u)\times x\, dx.
	\end{multline}
	Set 
	\[
	I_1 := - \omega^2 \int_\Omega u \cdot (\nabla\times \bar u)\times x\, dx
\quad 	\mbox{ and } \quad 
	I_2 := \int_{\Omega} \nabla\times(\nabla\times u)\cdot (\nabla\times \bar u)\times x\, dx.
	\]
We have 
	\begin{align*}
		I_1 &=  -\omega^2\int_{\Omega} u\cdot (\nabla\times \bar u)\times x\, dx =   \omega^2 \int_{\Omega} (\nabla\times \bar u)\cdot (u\times x)\, dx\\[6pt]
		&=   \omega^2 \int_{\Omega} \bar u\cdot \nabla\times(u \times x)\, dx  - \omega^2 \int_{\partial \Omega}(\bar u \times \nu)\cdot (u\times x)\, ds \quad (\mbox{by integration by parts}).
	\end{align*}
	Recall that,  for all $v\in [H^1(\Omega)]^3$, 
	\begin{equation}\label{lem-moz-e3.1}
	\nabla\times(v\times x) = - x\times(\nabla\times v) + v + \nabla (v\cdot x) - x\dive v  \quad \mbox{ in } \Omega.
	\end{equation}
	Using \eqref{lem-moz-e3.1}  and the fact $\dive  u = \dive j = 0$ in $\Omega$, we derive that 
	\begin{align*}
		I_1 
		&= - \omega^2\int_{\Omega}\bar u\cdot \big[x\times (\nabla\times u)\big]\, dx + \omega^2\int_{\Omega}|u|^2\,dx\\[6pt] 
		&\quad \quad \quad\quad  + \omega^2\int_{\Omega}\bar u\cdot\nabla (u \cdot x)\, dx - \omega^2 \int_{\partial \Omega}(\bar u \times \nu)\cdot (u\times x)\, ds\\[6pt]
		&= - \overline{I_1} + \omega^2\left[\int_{\Omega}|u|^2\,dx + \int_{\partial \Omega}(\bar u\cdot \nu)(u \cdot x)\, ds - \int_{\partial \Omega}(\bar u \times \nu)\cdot (u\times x)\, ds \right].
		\end{align*}
This implies
	\begin{equation}\label{lem-mor-e3}
	\Re{I_1} = \frac{\omega^2}{2}\left(\int_{\Omega}|u|^2\, dx +  \int_{\partial \Omega}(\bar u\cdot \nu)(u \cdot x)\, ds - \int_{\partial \Omega}(\bar u \times \nu)\cdot (u\times x)\, ds\right).
	\end{equation}
	Similarly, we have
	\begin{equation}\label{lem-mor-e4}
		\Re{I_2} = \frac{1}{2}\left(\int_{\Omega}|\nabla\times u|^2\, dx + \int_{\partial \Omega}(\nabla\times\bar u\cdot \nu)(\nabla\times u \cdot x)\, ds - \int_{\partial \Omega}\big((\nabla\times \bar u ) \times \nu \big)\cdot \big((\nabla\times u)\times x \big)\, ds\right).
	\end{equation}	
Combining  \eqref{lem-moz-e3}, \eqref{lem-mor-e3},  and \eqref{lem-mor-e4} yields 
	\begin{multline}\label{lem-mor-p1}
		\int_{\Omega}\omega^2|u|^2 + |\nabla\times u|^2\, dx  - \int_{\partial \Omega}\omega^2(\bar u \times \nu)\cdot (u\times x) + ((\nabla\times \bar u) \times \nu)\cdot ((\nabla\times u)\times x)\, ds  \\[6pt] +  \int_{\partial \Omega} \omega^2 (\bar u\cdot \nu)(u \cdot x) + (\nabla\times\bar  u\cdot \nu)(\nabla\times u \cdot x) \, ds  = 2\Re \left\{\int_{\Omega}j \cdot(\nabla\times\bar u)\times x\, dx\right\} .
	\end{multline}
This implies \eqref{lem-mor-claim-*} in the case where $\dive j = 0$ in $\Omega$. 

We next consider an arbitrary $\dive j$. Let $\phi \in H^1_0(\Omega)$ be the unique solution of
\[
\Delta \phi = \dive j \quad \mbox{ in } \Omega. 
\]
It is clear that
\begin{equation}\label{lem-mor-p12}
 \| \phi \|_{H^1(\Omega)}  \le C\|j\|_{L^2(\Omega)}
\end{equation}
and
\begin{equation}\label{lem-mor-p13}
\|\nabla\phi\times \nu\|_{L^2(\partial \Omega)} \leq C\|\phi\|_{H^2(\Omega)}\leq C \|\dive j\|_{L^2(\Omega)},
\end{equation}
for some positive constant $C$ depending only on $\Omega$.
 Set 
\begin{equation}\label{lem-mor-p11}
\tilde u = u - \omega^{-2}\nabla \phi\mbox{ in } \Omega.
\end{equation}
We have 
\[
\nabla\times \nabla\times \tilde u - \omega^2 \tilde u = j - \nabla \phi \mbox{ in } \Omega.
\]
Since $\dive(j - \nabla \phi) = 0$ in $\Omega$, applying the previous case to $\tilde u$, we obtain the conclusions from \eqref{lem-mor-p12}, \eqref{lem-mor-p13}, and \eqref{lem-mor-p11}.
\end{proof}

As a consequence of Lemma~\ref{lem-mor}, one has
 
\begin{corollary}\label{cor-mor} Let $\omega > \omega_0$. Let $j \in H(\dive, B_1\setminus B_{3/4})$, and let $(E, H)\in [H(\curl, B_1\setminus B_{3/4})]^2$ be such that $E\cdot\nu, H\cdot \nu \in [L^2(\partial B_1)]^3$. Assume that
\[
\begin{cases}
\nabla\times E = i\omega H &\mbox{ in } B_1\setminus B_{3/4},\\[6pt]
\nabla\times H = -i\omega E + j &\mbox{ in } B_1\setminus B_{3/4},
\end{cases}
\quad \mbox{ and } \quad 
\dive j = 0 \mbox{ in } B_1\setminus B_{3/4}.
\]
We have 
\begin{equation*}
\|(E \times \nu, H\times\nu)\|_{L^2(\partial B_1)} \leq C\Big(\|(E, H)\|_{L^2(B_1\setminus B_{3/4})} 
+ \|(E\cdot\nu, H\cdot\nu)\|_{L^2(\partial B_1)} + \|j\|_{L^2(B_1\setminus B_{3/4})}\Big), 
\end{equation*}
for some positive constant $C$ depending only on $\omega_0$.
\end{corollary}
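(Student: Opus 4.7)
The plan is to apply the multiplier method from Lemma~\ref{lem-mor} to $u = E$, but since the annular domain $B_1 \setminus B_{3/4}$ is not convex, I replace the multiplier $x$ by a cutoff vector field $Z(x) = \phi(|x|)\,x$, where $\phi \in C^\infty([0,\infty))$ satisfies $\phi \equiv 0$ on $[0, 13/16]$ and $\phi \equiv 1$ on $[7/8, \infty)$. Then $Z = x$ in a neighborhood of $\partial B_1$, while $Z$ and all its derivatives vanish identically in a neighborhood of $\partial B_{3/4}$.

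From the Maxwell system together with $\dive j = 0$ one gets $\dive E = 0$ in $B_1 \setminus B_{3/4}$ and
\begin{equation*}
\nabla\times\nabla\times E - \omega^2 E = i\omega j \qquad \text{in } B_1 \setminus B_{3/4},
\end{equation*}
with the source $i\omega j$ divergence-free. The hypotheses ensure that $E\cdot\nu \in L^2(\partial B_1)$ and that $(\nabla \times E)\cdot \nu = i\omega\, H\cdot\nu \in L^2(\partial B_1)$, so the integration by parts underlying the proof of Lemma~\ref{lem-mor} is justified for $u = E$.

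Repeating that proof with $Z$ in place of $x$, and using the two elementary facts $\nabla\times Z = 0$ (because $Z$ is radial) and $(u \cdot \nabla)Z = \phi\,u + (\phi'(|x|)/|x|)(u\cdot x)\,x$, identity \eqref{lem-moz-e3.1} generalizes to
\begin{equation*}
\nabla\times(u\times Z) = (\phi + |x|\phi'(|x|))\,u + \nabla(u\cdot Z) - 2\,\frac{\phi'(|x|)}{|x|}(u\cdot x)\,x - Z\times(\nabla\times u) - Z\dive u,
\end{equation*}
which reduces to \eqref{lem-moz-e3.1} when $\phi \equiv 1$. Carrying out the resulting computation of $\Re I_1$ and $\Re I_2$, all new contributions involve only the bounded functions $\phi + |x|\phi'(|x|) - 1$ and $|x|\phi'(|x|)$ multiplied pointwise by $|u|^2$ or $|\nabla \times u|^2$, and after integration are controlled by $C(\|E\|_{L^2(B_1\setminus B_{3/4})}^2 + \|\nabla\times E\|_{L^2(B_1\setminus B_{3/4})}^2)$. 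Since $Z \equiv 0$ near $\partial B_{3/4}$, every boundary integral on the inner sphere vanishes; on $\partial B_1$ one has $\nu = x$, so the pointwise identities $(\bar u\cdot\nu)(u\cdot Z) = |u\cdot\nu|^2$ and $(\bar u\times\nu)\cdot(u\times Z) = |u\times\nu|^2$ turn the outer boundary integrals directly into squared $L^2(\partial B_1)$ norms. Rearranging to isolate the tangential traces and estimating the forcing $2\Re \int (i\omega j)\cdot[(\nabla\times\bar E)\times Z]\,dx$ by $C\omega\|j\|_{L^2}\|\nabla\times E\|_{L^2}$ via Young's inequality yields
\begin{equation*}
\omega^2\|E\times\nu\|_{L^2(\partial B_1)}^2 + \|(\nabla\times E)\times\nu\|_{L^2(\partial B_1)}^2 \le C\Big[\omega^2\|E\cdot\nu\|_{L^2(\partial B_1)}^2 + \|(\nabla\times E)\cdot\nu\|_{L^2(\partial B_1)}^2 + \omega^2(\|E\|_{L^2}^2 + \|H\|_{L^2}^2 + \|j\|_{L^2}^2)\Big].
\end{equation*}

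Finally, using $\nabla\times E = i\omega H$ to rewrite $(\nabla\times E)\times\nu = i\omega\,H\times\nu$ and $(\nabla\times E)\cdot\nu = i\omega\,H\cdot\nu$, dividing by $\omega^2$, and taking square roots give the stated bound. The main obstacle is the bookkeeping of the modified Morawetz identity — verifying that the cutoff multiplier contributes only lower-order interior $L^2$ terms (with no surviving derivative of $E$ after the integration by parts) and that the inner boundary contributions genuinely vanish because $Z$ itself, rather than only its normal component, is zero near $\partial B_{3/4}$. Once this is checked, the rearrangement leading to the claimed estimate is routine.
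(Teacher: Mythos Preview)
Your argument is correct, but it follows a different route from the paper. The paper localizes the \emph{solution}: it sets $u=\phi E$ with a radial cutoff $\phi$ equal to $1$ near $\partial B_1$ and $0$ near $\partial B_{3/4}$, extends by zero into $B_{3/4}$, and then applies Lemma~\ref{lem-mor} directly on the convex ball $B_1$. The price is that the new right-hand side $i\omega\phi j+\nabla\times(\nabla\phi\times E)+\nabla\phi\times(\nabla\times E)$ contains first derivatives of $E$ on $\supp\nabla\phi$, which the paper controls by the interior elliptic estimate $\|\nabla E\|_{L^2(B_{5/6}\setminus B_{4/5})}\le C\omega(\|E\|_{L^2}+\|j\|_{L^2})$ coming from $\Delta E+\omega^2E=i\omega j$. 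You instead localize the \emph{multiplier}, replacing $x$ by $Z=\phi(|x|)x$ and redoing the Morawetz computation on the annulus itself. Because $Z$ is radial (hence curl-free) and vanishes near $\partial B_{3/4}$, the inner boundary terms drop out and the extra commutator terms are of zeroth order in $E$ and $\nabla\times E$; no interior gradient estimate is needed. Your identity for $\nabla\times(u\times Z)$ is correct, and on $\partial B_1$ the equalities $Z=\nu$, $u\cdot Z=u\cdot\nu$, $u\times Z=u\times\nu$ indeed turn the outer boundary integrals into the exact $L^2$ norms you want. The trade-off: the paper can quote Lemma~\ref{lem-mor} as a black box at the cost of one auxiliary elliptic bound, while your approach is more self-contained but requires rerunning the multiplier bookkeeping. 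Both give the same estimate with a constant depending only on $\omega_0$.
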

\begin{proof}
Let $0\leq \phi \leq 1$ be a smooth function in $B_1$ such that $\phi(x) = 0 \mbox{ in } B_{4/5} \mbox{ and } \phi(x) = 1 \mbox{ in } B_1\setminus B_{5/6}$. Extend $u$ and $j$ by $0$ in $B_{3/4}$,  and set $u = \phi E \mbox{ in } B_1$. Then
\begin{equation}\label{cor-mor-e1}
\nabla\times \nabla \times u - \omega^2 u = i\omega \phi j + \nabla\times(\nabla \phi \times E) + \nabla \phi \times(\nabla\times E)  \mbox{ in } B_1.
\end{equation}
Since $\Delta E + \omega^2 E  = i\omega j$ in $B_1\setminus B_{3/4}$, we have
\begin{equation}\label{cor-mor-e2}
\|\nabla E\|_{L^2(B_{5/6}\setminus B_{4/5})} \leq C\omega \Big(\|E\|_{L^2(B_1\setminus B_{3/4})} + \|j\|_{L^2(B_{1}\setminus B_{3/4})} \Big). 
\end{equation}
Applying Lemma \ref{lem-mor}  and using \eqref{cor-mor-e1} and  \eqref{cor-mor-e2},  one obtains the conclusion.
\end{proof}

The main result of this section is the following lemma,  which is a variant of Lemma \ref{lem-lossy-low} in the case where $\omega_0 < \omega <\omega_1\rho^{-1}$. 

\begin{lemma}\label{lem-lossy}
	Let  $ 0 < \rho  < \rho_0$ and $0< \omega_0 <  \omega < \omega_1/ \rho$.  Suppose that $h_1, h_2\in L^{2}(\dive_{\Gamma}, \partial B_{1})$, and let $(E, H) \in [\cap_{R> 1}H(\curl, B_R\setminus \partial B_{1})]^2$ be the unique radiating solution to the system 
	\begin{equation*}		\begin{cases}
			\nabla\times E = i\omega \tilde \mu_\rho H &\mbox{ in } \R^3,\\[6pt]
			\nabla\times  H = -i\omega \tilde \eps_\rho E + \tilde\sigma_{\rho}E  & \mbox{ in } \R^3,\\[6pt]
			[E\times \nu] = h_1, \, [H\times \nu] = h_2 & \mbox{ on } \partial B_{1}. 
		\end{cases}
	\end{equation*} 
We have, if $\rho_0$ is small enough and $\omega_0$ is large enough, that   
	\begin{equation*} 
		\|(E \times \nu, H \times \nu)_{\inte}\|_{L^{2}(\partial B_1)}  \leq C \Big(  \|(h_1,h_2)\|_{L^2(\partial B_{1})} + \omega^{-1} \|(\dive_{\partial B_1}h_1, \dive_{\partial B_1}h_2)\|_{L^2(\partial B_{1})} \Big), 
	\end{equation*}
	for some positive constant $C$ depending only on $\omega_0$, $\omega_1$,  and $\rho_0$.
\end{lemma}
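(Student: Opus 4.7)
My proof plan follows the architecture of the proof of Lemma~\ref{lem-lossy-low}, replacing the Helmholtz trace step on $\partial B_{2/3}$ by the Morawetz multiplier estimate of Corollary~\ref{cor-mor}, which directly delivers the $L^{2}$ tangential trace on $\partial B_{1}$ required here. Throughout, write
$$
M := \|(h_1,h_2)\|_{L^2(\partial B_1)} + \omega^{-1}\|(\dive_{\partial B_1} h_1,\dive_{\partial B_1} h_2)\|_{L^2(\partial B_1)},\qquad T := \|(E\times\nu,H\times\nu)|_{\inte}\|_{L^2(\partial B_1)}.
$$
The overall strategy is to derive a closed inequality of the form $T\le C(\varepsilon+\rho)T+C_{\varepsilon}M$ and then absorb the $T$-term by choosing $\varepsilon$ small, $\rho_0$ small, and $\omega_0$ large.

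\emph{Energy identity.} I would repeat the calculation from Lemma~\ref{lem-lossy-low}: multiply $\nabla\times E=i\omega\tilde\mu_{\rho}H$ by $\tilde\mu_{\rho}^{-1}\overline{\nabla\times E}$, integrate on $B_R\setminus\partial B_1$, integrate by parts, send $R\to+\infty$ using the Silver--M\"uller condition, and take the imaginary part. The damping $\tilde\sigma_{\rho}=\chi_{B_1\setminus B_{1/2}}$ produces
$$
\int_{B_1\setminus B_{1/2}}|E|^{2}\,dx = \Re\int_{\partial B_1}\bigl(h_2\cdot\bar E|_{\ext}-\bar h_1\cdot H|_{\inte}\bigr)\,ds.
$$
Because $h_{1},h_{2}$ are tangential, using $(E\times\nu)|_{\ext}=(E\times\nu)|_{\inte}+h_{1}$ and Cauchy--Schwarz in $L^{2}(\partial B_1)$ yields the crucial bound
$$
\|E\|_{L^2(B_1\setminus B_{1/2})}^2 \le C(MT+M^{2}),
$$
with no factor of $\omega$ (the $L^{2}$--$L^{2}$ pairing is the reason we are able to bypass Lemma~\ref{lem-T-1-***}). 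Since $\dive E=0$ and $\Delta E+(\omega^{2}+i\omega)E=0$ in the lossy layer, a standard Caccioppoli-type estimate together with $H=(i\omega)^{-1}\nabla\times E$ gives $\|H\|_{L^2(B_1\setminus B_{3/4})}\le C\|E\|_{L^2(B_1\setminus B_{1/2})}$.

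\emph{Morawetz bound and normal traces.} I invoke Corollary~\ref{cor-mor} on $B_1\setminus B_{3/4}$ with source $j=E$ (divergence-free in the lossy layer), to obtain
$$
T \le C\Bigl(\|(E,H)\|_{L^{2}(B_1\setminus B_{3/4})}+\|(E\cdot\nu,H\cdot\nu)|_{\inte}\|_{L^2(\partial B_1)}+\|E\|_{L^{2}(B_1\setminus B_{3/4})}\Bigr).
$$
To control the interior normal traces, I would use the piecewise identities $i\omega\tilde\mu_{\rho}H\cdot\nu=-\dive_{\partial B_1}(E\times\nu)$ and $(-i\omega\tilde\eps_{\rho}+\tilde\sigma_{\rho})E\cdot\nu=-\dive_{\partial B_1}(H\times\nu)$, combined with the jump conditions, which give
$$
H\cdot\nu|_{\inte}=\rho H\cdot\nu|_{\ext}+(i\omega)^{-1}\dive_{\partial B_1}h_1,\qquad (1-i\omega)E\cdot\nu|_{\inte}=-i\omega\rho E\cdot\nu|_{\ext}+\dive_{\partial B_1}h_2.
$$
Since the effective exterior wave number is $\omega\rho\le\omega_1$, the exterior Calder\'on map is bounded uniformly (proved by a contradiction--compactness argument in the spirit of Lemma~\ref{lem:ex-123}), so
$$
\|(E\cdot\nu,H\cdot\nu)|_{\ext}\|_{L^2(\partial B_1)}\le C\|(E\times\nu,H\times\nu)|_{\ext}\|_{L^2(\partial B_1)}\le C(M+T).
$$
Using $|1-i\omega|\ge C\omega$ for $\omega_0$ large, this yields $\|(E\cdot\nu,H\cdot\nu)|_{\inte}\|_{L^2(\partial B_1)}\le CM+C\rho T$.

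\emph{Closing the estimate.} Inserting $\|H\|_{L^2}\le C\|E\|_{L^2}$, the normal-trace bound, and $\|E\|_{L^2(B_1\setminus B_{1/2})}\le C\sqrt{MT+M^{2}}\le C\varepsilon T+C_{\varepsilon}M$ (Young's inequality) into the Morawetz estimate, one obtains $T\le C(\varepsilon+\rho)T+C_{\varepsilon}M$. Taking $\varepsilon$ small and $\rho_0$ small enough that $C(\varepsilon+\rho_0)<1/2$ absorbs the $T$ on the right and gives $T\le CM$, which is the claim. \textbf{The main obstacle is the second step:} producing a bound on the exterior normal trace by the exterior tangential trace in $L^{2}(\partial B_1)$ with a constant uniform in $\omega\rho\in(0,\omega_1]$. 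The restriction $\omega\le\omega_1/\rho$ in the hypothesis is precisely what makes this Calder\'on-type bound available; the smallness of $\rho_0$ then allows absorption of $\rho T$ in the final inequality, and the largeness of $\omega_0$ is needed to replace $(1-i\omega)^{-1}$ by $O(\omega^{-1})$ in the normal-trace identity.
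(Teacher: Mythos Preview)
Your architecture matches the paper's proof almost exactly: the energy identity for $\|E\|_{L^2(B_1\setminus B_{1/2})}$, Corollary~\ref{cor-mor} for the tangential trace, the transmission identities for the normal traces, the exterior bound $\|(E\cdot\nu,H\cdot\nu)|_{\ext}\|_{L^2}\le C\|(E\times\nu,H\times\nu)|_{\ext}\|_{L^2}$ (the paper obtains it from a Costabel trace inequality combined with Lemma~\ref{lem:ex-123}, using $\omega\rho\le\omega_1$), and then absorption. The paper uses $\omega_0$ itself as the Young parameter, which is why the hypothesis reads ``$\omega_0$ large enough''; your free $\varepsilon$ plays the same role.

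There is, however, a genuine gap at your step $\|H\|_{L^2(B_1\setminus B_{3/4})}\le C\|E\|_{L^2(B_1\setminus B_{1/2})}$. Caccioppoli for $\Delta E+(\omega^{2}+i\omega)E=0$ is an \emph{interior} estimate: it gives $\|\nabla E\|_{L^2(V)}\le C\omega\|E\|_{L^2(B_1\setminus B_{1/2})}$ only for $V\Subset B_1\setminus B_{1/2}$. The set $B_1\setminus B_{3/4}$ is not compactly contained in the lossy layer---it shares the boundary $\partial B_1$---so the claimed bound on $H$ up to $\partial B_1$ does not follow, and Corollary~\ref{cor-mor} genuinely needs $\|H\|$ up to $\partial B_1$ (its cutoff equals $1$ near $\partial B_1$). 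The paper closes this gap with an additional integration-by-parts identity on $B_1\setminus B_r$,
\[
\omega^{2}\!\int_{B_1\setminus B_r}\!|H|^{2}-\omega^{2}\!\int_{B_1\setminus B_r}\!|E|^{2}
=\Re\Bigl\{i\omega\!\int_{\partial B_1}\bar E|_{\inte}\cdot(H\times\nu)|_{\inte}-i\omega\!\int_{\partial B_r}\bar E\cdot(H\times\nu)\Bigr\},
\]
which after dividing by $\omega^{2}$ controls $\|H\|^{2}_{L^2(B_1\setminus B_r)}$ by $\|E\|^{2}_{L^2(B_1\setminus B_r)}$ plus $\omega^{-1}T^{2}\le\omega_0^{-1}T^{2}$ (from $\partial B_1$) plus an interior term on $\partial B_r$, $r\in(2/3,3/4)$, handled by the Caccioppoli bound you already have. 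Inserting this step in place of your Caccioppoli claim, the rest of your argument goes through unchanged.
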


\begin{proof} 
	Applying Corollary \ref{cor-mor}, we have
	\begin{equation}\label{lem-lossy-e1}
	\|(E \times \nu|_{\inte}, H \times  \nu|_{\inte})\|_{L^2(\partial B_1)} \leq C\Big(\|(E, H)\|_{L^2(B_1\setminus B_{3/4})} + \|(E \cdot \nu, H \cdot  \nu)|_{\inte}\|_{L^2(\partial B_1)}\Big).
	\end{equation}
One has, see, e.g.,  \cite{Costabel}, 
	\begin{equation*}
	\|(E \cdot  \nu, H \cdot \nu) |_{\ext} \|_{L^2(\partial B_1)}  \le C \Big(  \|(E \times \nu, H \times  \nu) |_{\ext} \|_{L^2(\partial B_1)} + \| (E, H)\|_{L^2(B_2 \setminus B_1)} + \|(E, H) \|_{L^2(\partial B_2)} \Big). 
	\end{equation*}
Applying Lemma~\ref{lem:ex-123} for $(E, H)$ in $\R^3\setminus B_1$, we obtain
\begin{equation*}
 \| (E, H)\|_{L^2(B_2 \setminus B_1)} + \|(E, H) \|_{L^2(\partial B_2)} \leq C\|(E \times \nu, H \times  \nu) |_{\ext} \|_{L^2(\partial B_1)}.
\end{equation*} 
It follows that
\begin{equation}\label{lem-lossy-*}
	\|(E \cdot  \nu, H \cdot \nu) |_{\ext} \|_{L^2(\partial B_1)}  \le C \|(E \times \nu, H \times  \nu) |_{\ext} \|_{L^2(\partial B_1)}.
\end{equation}
Since
	\begin{equation*}
	\big(1  - (i \omega)^{-1} \big)E \cdot \nu |_{\inte} = \rho E \cdot \nu |_{\ext}  + \frac{1}{i \omega} \dive_{\partial B_1} h_2 \quad \mbox{ and } \quad H \cdot \nu |_{\inte} = \rho H \cdot \nu |_{\ext} - \frac{1}{i \omega} \dive_{\partial B_1} h_1, 
	\end{equation*}
	we derive from \eqref{lem-lossy-*} that 
	\begin{equation*}
	\|(E \cdot  \nu, H \cdot \nu) |_{\inte} \|_{L^2(\partial B_1)}  \le C \Big( \rho\|(E \times \nu, H \times  \nu) |_{\ext} \|_{L^2(\partial B_1)} + \omega^{-1}\|\dive_{\partial B_1}(h_1, h_2)\|_{L^2(\partial B_1)} \Big). 
	\end{equation*}
	From  the transmission conditions on $\partial B_1$, we deduce that 
	\begin{multline}\label{lem-lossy-e2}
	\|(E \cdot  \nu, H \cdot \nu) |_{\inte} \|_{L^2(\partial B_1)} \\[6pt] \le C \Big( \rho\|(E \times \nu, H \times  \nu) |_{\inte} \|_{L^2(\partial B_1)} + \rho\|(h_1, h_2)\|_{L^2(\partial B_1)} + 
	\omega^{-1}\|\dive_{\partial B_1}(h_1, h_2)\|_{L^2(\partial B_1)}
	 \Big). 
	\end{multline}
	
	On the other hand, as in \eqref{lem-lossy-low-e5.1}, we have 
	\begin{align}\label{lem-lossy-e2bis}
	\int_{B_1\setminus B_{1/2}}|E|^2\,dx  &\le   \left|\int_{\partial B_1} h_{2} \cdot \bar E |_{\ext} - \bar h_{1} H |_{\inte} \, ds\right|\\[6pt]
\nonumber	& \leq C\Big(\omega_0^2 \|(h_1, h_2)\|^2_{L^2(\partial B_1)} + \omega_0^{-2}\|(E\times\nu, H\times\nu)|_{\ext}\|^2_{L^2(\partial B_1)}\Big).
	\end{align}
	Since $\Delta E + \omega^2 E - i \omega E = 0$ in $B_1 \setminus B_{1/2}$, it follows that 
	\begin{equation}\label{lem-lossy-e3}
	\int_{B_{3/4}\setminus B_{2/3}}|E|^2 + \omega^{-2} |\nabla E|^2 \,dx     \leq C\Big(\omega_0^2 \|(h_1, h_2)\|^2_{L^2(\partial B_1)} + \omega_0^{-2}\|(E\times\nu, H\times\nu)|_{\ext}\|^2_{L^2(\partial B_1)}\Big).
	\end{equation}
	An integration by parts yields, for $2/3 < r < 3/4$, that 
	\begin{multline}\label{lem-lossy-e4}
	\omega^2\int_{B_1\setminus B_{r}}|H|^2 \, dx - \omega^2\int_{B_1\setminus B_{r}}|E|^2 \, dx \\
	= \Re \Big\{i\omega \int_{\partial B_1}\bar E|_{\inte} (H\times\nu|_{\inte})\,ds - i \omega \int_{\partial B_{r}}\bar E |_{\inte} (H \times\nu|_{\inte})\,ds\Big\}.
	\end{multline}
	Combining \eqref{lem-lossy-e2bis}, \eqref{lem-lossy-e3}, and \eqref{lem-lossy-e4} yields 
	\begin{equation}\label{lem-lossy-e5}
	\| (E, H) \|_{L^2(B_1 \setminus B_{3/4})} \le C \Big( \omega_0\|(h_1, h_2)\|_{L^2(\partial B_1)}  + \omega_0^{-1}\| (E \times \nu, H \times \nu) |_{\inte} \|_{L^2(\partial B_1)}\Big).
	\end{equation}	
	From \eqref{lem-lossy-e1}, \eqref{lem-lossy-e2}, and \eqref{lem-lossy-e5}, one obtains that, for $\rho$ small enough,
\begin{multline*}
\|(E \times \nu|_{\inte}, H \times  \nu|_{\inte})\|_{L^2(\partial B_1)}  \\[6pt]\leq C\Big(\omega_0\|(h_1, h_2)\|_{L^2(\partial B_1)} + \omega_0^{-1}\|(E\times\nu, H\times\nu)|_{\inte}\|_{L^2(\partial B_1)}  + \omega^{-1}\|\dive_{\partial B_1}(h_1, h_2)\|_{L^2(B_1)}\Big).
\end{multline*}
This implies 
\begin{equation*}
\|(E \times \nu|_{\inte}, H \times  \nu|_{\inte})\|_{L^2(\partial B_1)} \leq C\Big(\|(h_1, h_2)\|_{L^2(\partial B_1)} + \omega^{-1}\|\dive_{\partial B_1}(h_1, h_2)\|_{L^2(B_1)}\Big),
\end{equation*}
for $\omega_0$ large enough and $\rho$ small enough.
\end{proof}

\subsubsection{Proof of Proposition \ref{prop-av}.} 
Since $\omega>\omega_0$ is  large, by \eqref{STST}, one has
\begin{equation*}
\|\EEE(\rho .), \HHH(\rho .)\|_{L^2(\partial B_1)} + \omega^{-1}\| \dive_{\partial B_1} (\EEE(\rho .)\times\nu,   \dive_{\partial B_1} \HHH(\rho .)\times\nu)\|_{L^2(\partial B_1)} \leq C  \omega \|\JJJ\|_{L^2(\R^3)}.
\end{equation*}
Applying Lemma~\ref{lem-lossy}, we obtain 
\begin{equation*}
\|(\tbE_{\rho}, \tbH_{\rho})\|_{L^2(B_2 \setminus B_1)}  \le C \omega  \| \JJJ \|_{L^2(\R^3)}.
\end{equation*}
The conclusion now follows from Lemma~\ref{lem-FF-1}.  \qed

\subsection{High and very high frequency analysis - Proof of Proposition~\ref{pro-high}}\label{sc-high}

This section contains two subsections. In the first, we present several lemmas used in the proof of Proposition~\ref{pro-high} and in the second,   the proof of Proposition \ref{pro-high} is given.

\subsubsection{Some useful lemmas} 
We begin this section with  a trace-type result for Maxwell's equations in a bounded domain. The analysis is based on  a dual argument,  see, e.g.,  \cite{Lions, Cap2}). In this subsection, $D$ denotes a  smooth,   bounded, open subset of $\R^3$.

\begin{lemma}\label{lem-es} Let $\omega > \omega_0> 0$ and $f\in H(\dive, D)$. Assume that $(E, H) \in [H(\curl, D)]^2$ satisfies the equations
\begin{equation}\label{lem-es-e0}
\begin{cases}
\nabla\times E = i\omega H &\mbox{ in } D,\\[6pt]
\nabla\times H = -i\omega E + f &\mbox{ in } D.
\end{cases}
\end{equation}
Then
\[
\|E\|_{H^{-1/2}(\partial D)} +  \omega \| H \times \nu \|_{H^{-3/2}(\partial D)}  \leq C\Big( \omega^2\|E\|_{L^2(D)} + \omega\|f\|_{L^2(D)} + \omega^{-1}\| \dive f \|_{L^2(D)}\Big),
\]
for some positive constant $C$ depending only on $D$ and $\omega_0$. 
\end{lemma}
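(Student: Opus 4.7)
The plan is a duality argument built on two consequences of system~\eqref{lem-es-e0}: taking the divergence of the second equation yields $\dive E = -(i\omega)^{-1}\dive f$ in $D$ (so $\|\dive E\|_{L^2}\leq \omega^{-1}\|\dive f\|_{L^2}$), and taking the curl of the first gives the double-curl equation $\nabla\times\nabla\times E = \omega^2 E + i\omega f$ in $D$. I would estimate the normal and tangential components of the trace of $E$ on $\partial D$ separately, and then deduce the bound on $H\times\nu$ by a second duality step that re-uses the tangential estimate.

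For the normal component, given a scalar $\phi \in H^{1/2}(\partial D)$ lifted to $\Phi \in H^1(D)$ with $\|\Phi\|_{H^1(D)}\leq C\|\phi\|_{H^{1/2}(\partial D)}$, the identity
\begin{equation*}
\int_{\partial D} (E\cdot\nu)\phi\,ds = \int_D \Phi\,\dive E + E\cdot\nabla\Phi\,dx
\end{equation*}
combined with the bound on $\dive E$ gives $\|E\cdot\nu\|_{H^{-1/2}(\partial D)}\leq C\big(\|E\|_{L^2}+\omega^{-1}\|\dive f\|_{L^2}\big)$. For the tangential component $\nu\times E$, given tangential $g\in [H^{1/2}(\partial D)]^3$ I would use the standard trace theorem on the smooth domain $D$ to construct $G\in [H^2(D)]^3$ with $G|_{\partial D}=0$ and $\partial_\nu G|_{\partial D}=-g$; a direct computation using $G|_{\partial D}=0$ then shows that $\nu\times(\nabla\times G)|_{\partial D}=g$, with $\|G\|_{H^2(D)}\leq C\|g\|_{H^{1/2}(\partial D)}$. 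Applying the double-curl Green identity
\begin{equation*}
\int_D \nabla\times\nabla\times E\cdot G - E\cdot\nabla\times\nabla\times G\,dx = \int_{\partial D} (\nu\times\nabla\times E)\cdot G + (\nu\times E)\cdot(\nabla\times G)\,ds,
\end{equation*}
the first boundary term vanishes since $G|_{\partial D}=0$, and substituting $\nabla\times\nabla\times E=\omega^2 E + i\omega f$ in the interior bounds $|\langle\nu\times E,g\rangle_{\partial D}|$ by $C(\omega^2\|E\|_{L^2}+\omega\|f\|_{L^2})\|g\|_{H^{1/2}(\partial D)}$. Adding the two estimates gives the claimed $H^{-1/2}$ bound on $E$.

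For $\|H\times\nu\|_{H^{-3/2}(\partial D)}$, given tangential $g\in[H^{3/2}(\partial D)]^3$ I would pick $G\in [H^2(D)]^3$ with $G|_{\partial D}=g$ and $\|G\|_{H^2(D)}\leq C\|g\|_{H^{3/2}(\partial D)}$. The same double-curl identity now retains both boundary terms; replacing $\nabla\times E=i\omega H$ in the first and estimating the second through the duality pairing $\|\nu\times E\|_{H^{-1/2}(\partial D)}\|\nabla\times G\|_{H^{1/2}(\partial D)}$, together with $\|\nabla\times G\|_{H^{1/2}(\partial D)}\leq C\|G\|_{H^2(D)}$ and the $\nu\times E$ bound just established, yields
\begin{equation*}
\omega\,\big|\langle\nu\times H,g\rangle_{\partial D}\big| \leq C\big(\omega^2\|E\|_{L^2}+\omega\|f\|_{L^2}\big)\|g\|_{H^{3/2}(\partial D)}
\end{equation*}
after dividing by $\omega$ and absorbing the lower-order $\|E\|_{L^2}$ terms using $\omega>\omega_0$. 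The main technical point is the careful construction of the two test-field extensions with the prescribed tangential/normal data in the correct Sobolev scales; this is standard trace theory on $\partial D$, but the $\omega$-powers must be tracked so that the final constant depends only on $D$ and $\omega_0$.
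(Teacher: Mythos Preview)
Your proposal is correct and follows the same duality strategy as the paper, though organized a bit differently. The paper first reduces \eqref{lem-es-e0} to the vector Helmholtz equation $\Delta E+\omega^2E=(i\omega)^{-1}\nabla(\dive f)-i\omega f$ and pairs it against a single $[H^2(D)]^3$ test function $\xi$ with $\xi|_{\partial D}=0$, $\partial_\nu\xi|_{\partial D}=\phi$ to bound the \emph{full} trace $\|E\|_{H^{-1/2}(\partial D)}$ at once; you instead keep the curl--curl form, handle $E\cdot\nu$ separately via the divergence theorem (using $\dive E=(i\omega)^{-1}\dive f$), and use the double-curl Green identity with the analogous test field $G$ for $\nu\times E$. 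For $H\times\nu$ the paper uses one integration by parts $\int_{\partial D}(H\times\nu)\cdot\varphi=\int_D(\nabla\times\varphi)\cdot H-(\nabla\times H)\cdot\varphi$ and then feeds in the already-established $H^{-1/2}$ bound on $E$, which is exactly what your second duality step does in the curl--curl formulation. Your route has the minor advantage that the tangential bound on $\nu\times E$ and the resulting bound on $H\times\nu$ do not pick up the $\omega^{-1}\|\dive f\|_{L^2}$ term (it enters only through the normal component), while the paper's Helmholtz reduction carries it throughout; the paper's route is a little shorter since it does not split normal and tangential parts. Both give the stated estimate with the same dependence on $\omega$.
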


\begin{remark} \rm  It is crucial to our analysis that the constant $C$ is independent of $\omega$. 
\end{remark}

\begin{proof}  We have, from \eqref{lem-es-e0}, 
\begin{equation}\label{lem-es-Eq-E}
	\Delta E + \omega^2E = \nabla (\dive E) - \nabla \times (\nabla \times E) + \omega^2 E = \frac{1}{i \omega } \nabla( \dive f ) 
	- i \omega f \mbox{ in } D.
\end{equation}
Fix  $\phi \in [H^{1/2}(\partial D)]^3$ (arbitrary). By the trace theory, see, e.g.,  \cite[Theorem 1.6]{Girault}, there exists  $\xi\in [H^2(D)]^3$ such that
	\begin{equation}\label{lem-esbound-e-e1}
	\xi  = 0 \mbox{ on } \partial D, \quad
	 \frac{\partial \xi}{\partial \nu} = \phi \mbox{ on } \partial D, 
	\end{equation} 
	and
	\begin{equation}\label{lem-esbound-e-e2}
	\|\xi\|_{H^2(D)}\leq C\|\phi\|_{H^{1/2}(\partial D)}. 
	\end{equation}
Here and in what follows, $C$ denotes a positive constant depending only on $D$ and $\omega_0$.   Multiplying \eqref{lem-es-Eq-E} by $\xi$ and integrating by parts, we obtain 
	\begin{align}\label{lem-esbound-e-e3}
	\int_{D} (\Delta \xi + \omega^2 \xi) E - \int_{\partial D} E \phi =   \int_{D} (\Delta E + \omega^2 E) \xi =   \int_{D} - \frac{1}{i \omega} \dive f  \dive \xi  
	- i \omega f \xi.
		\end{align}
We derive from \eqref{lem-esbound-e-e1}, \eqref{lem-esbound-e-e2}, and \eqref{lem-esbound-e-e3} that 
\begin{equation*}
\left|\int_{\partial D}E\phi\, ds\right|\leq C\Big(\omega^2\|E\|_{L^2(D)} + \omega\|f\|_{L^2(D)}  + \omega^{-1}\| \dive f\|_{L^2(D)} \Big)\|\phi\|_{H^{1/2}(\partial D)}, 
\end{equation*}
which implies, since $\phi$ is arbitrary,  
\begin{equation}\label{lem-es-part1}
\|E \|_{H^{-1/2}(\partial D)} \le  C\Big(\omega^2\|E\|_{L^2(D)} + \omega\|f\|_{L^2(D)}  + \omega^{-1}\| \dive f\|_{L^2(D)} \Big).
\end{equation}

It remains to prove 
\begin{equation}\label{lem-es-claim2}
\|H\times\nu\|_{H^{-3/2}(\partial D)} \leq C\Big(\omega\|E\|_{L^2(D)} + \|f\|_{L^2(D)}+ \omega^{-2}\| \dive f \|_{L^2(D)}\Big).
\end{equation}
Fix $\varphi \in H^{3/2}(\partial D)$ (arbitrary), consider an extension of $\varphi$ in $D$ such that its $H^2(D)$-norm is bounded by  $C \| \varphi \|_{H^{3/2}(\partial D)}$,  and still denote this extension by $\varphi$. Such an extension exists by the trace theory, see, e.g., \cite[Theorem 1.6]{Girault}. We have 
\begin{equation}\label{cor-es-p0}
\int_{\partial D} H\times \nu\cdot  \varphi\, ds  = \int_{D}\Big(\nabla \times \varphi \cdot H - \nabla \times H \cdot \varphi\Big)\, dx . 
\end{equation}
Since
\begin{align*}
\left|\int_{D}\nabla\times \varphi \cdot H\, dx\right| & = \omega^{-1}\left|\int_{D}\nabla\times \varphi \cdot \nabla\times E\, dx\right|\\[6pt] 
\nonumber & =  \omega^{-1}\left|\int_{D}\nabla\times (\nabla\times \varphi) \cdot E\, dx + \int_{\partial D}E\cdot [( \nabla\times \varphi) \times\nu)] \, ds\right|, 
\end{align*}
and  $\nabla \times H = i \omega E + f$, 
it follows from \eqref{lem-es-part1}  that 
\begin{equation}\label{cor-es-p2}
\Big|\int_{D}\nabla\times \varphi \cdot H\, dx\Big| \leq C\Big(\omega\|E\|_{L^2(D)} + \|f\|_{L^2(D)} +  \omega^{-2}\| \dive f\|_{L^2(D)}\Big)\|\varphi\|_{H^{3/2}(\partial D)}
\end{equation}
and
\begin{equation}\label{cor-es-p1}
\Big|\int_{D}\nabla \times H \cdot \varphi\, dx\Big| \leq  C\Big(\omega\|E\|_{L^2(D)} + \|f\|_{L^2(D)}\Big)\|\varphi\|_{H^{3/2}(\partial D)}.
\end{equation}
Combining  \eqref{cor-es-p0},  \eqref{cor-es-p2}, and \eqref{cor-es-p1} yields 
\[
\left|\int_{\partial D} H\times \nu\cdot  \varphi\, ds\right| \leq C\Big(\omega\|E\|_{L^2(D)} + \|f\|_{L^2(D)} + \omega^{-2}\|\dive f\|_{L^2(D)}\Big)\|\varphi\|_{H^{3/2}(\partial D)}. 
\]
Since $\varphi$ is arbitrary, assertion~\eqref{lem-es-claim2} follows. The proof is complete.
\end{proof}

Using Lemma~\ref{lem-es}, we establish the following Lemma, which is the main result of this subsection.

\begin{lemma}\label{lem-high-due}
	Let $\omega > \omega_1> 0$, $0<\rho<1$, and  assume that $\omega\rho > \omega_1$.  Given $h_1, h_2 \in H^{3/2}(\dive_{\Gamma}, \partial B_1)$, let $(E, H) \in [\cap_{R> 1}H(\curl, B_R \setminus \partial B_1)]^2$ be the unique radiating solution of 
	\begin{equation*}
	\left\{\begin{array}{cl}
	\nabla\times E = i\omega\tilde \mu_{\rho}H &\mbox{ in } \R^3,\\[6pt]
	\nabla\times H = -i\omega\tilde \eps_\rho E + \tilde \sigma_{\rho} E &\mbox{ in } \R^3,\\[6pt]
	[E \times \nu] = h_1,  [H \times \nu] = h_2 &\mbox{ on } \partial B_1. 
	\end{array}\right.
	\end{equation*}
%	where $(\tilde \eps_\rho, \tilde \mu_\rho, \tilde \sigma_\rho)$ is defined in \eqref{eps-mu-rho-tilde}. 
	We have
	\[
\|E\times\nu|_{\ext}\|_{H^{-1/2}(\partial B_1)} + \omega\|H \times\nu|_{\ext}\|_{H^{-3/2}(\partial B_1)} \leq C \Big( \omega^4 \|h_2\|_{H^{1/2}(\partial B_1)} + \omega^3 \|h_1\|_{H^{3/2}(\partial B_1)} \Big), \]
	for some positive constant $C$ depending only on $\omega_1$.
\end{lemma}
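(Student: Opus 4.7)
The strategy is the duality (\`a la Lions) method, with Lemma \ref{lem-es} as the workhorse. In the exterior of $B_1$, the pair $(E, H)$ satisfies a source-free Maxwell system at the effective frequency $k = \omega\rho$, which by hypothesis is bounded below by $\omega_1$. Applying Lemma \ref{lem-es} to the bounded domain $D = B_2 \setminus B_1$ with this frequency and $f = 0$ yields
\[
\|E\|_{H^{-1/2}(\partial B_1)} + k \|H \times \nu\|_{H^{-3/2}(\partial B_1)} \le C k^2 \|E\|_{L^2(B_2 \setminus B_1)}.
\]
Since $\|E\times\nu|_{\ext}\|_{H^{-1/2}(\partial B_1)} \le \|E\|_{H^{-1/2}(\partial B_1)}$ and $k \le \omega$, this reduces the task to estimating $\|E\|_{L^2(B_2 \setminus B_1)}$ in terms of $h_1, h_2$, losing at most a factor $\omega^2$.

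To control $\|E\|_{L^2(B_2 \setminus B_1)}$, I would first exploit the fixed lossy layer $B_1 \setminus B_{1/2}$ (where $\tilde\sigma_\rho = 1$). Repeating the energy computation that leads to \eqref{lem-lossy-low-e5.1} gives
\[
\int_{B_1 \setminus B_{1/2}} |E|^2\, dx \le \left|\int_{\partial B_1} (h_2 \cdot \bar E|_{\ext} - \bar h_1 \cdot H|_{\ext})\, ds\right| + \left|\int_{\partial B_1} (\bar h_1 \times \nu) \cdot h_2\, ds\right|.
\]
The dualities $H^{1/2}(\partial B_1) \leftrightarrow H^{-1/2}(\partial B_1)$ and $H^{3/2}(\partial B_1) \leftrightarrow H^{-3/2}(\partial B_1)$ (exploiting the surface-divergence regularity of $h_1, h_2$) then bound the right-hand side by $C(\|h_2\|_{H^{1/2}} \|E\times\nu|_{\ext}\|_{H^{-1/2}} + \|h_1\|_{H^{3/2}} \|H\times\nu|_{\ext}\|_{H^{-3/2}} + \|h_1\|_{H^{3/2}} \|h_2\|_{H^{1/2}})$. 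Writing $M = \|E\|_{L^2(B_1\setminus B_{1/2})}$, $A = \|E\times\nu|_{\ext}\|_{H^{-1/2}}$, $B = \|H\times\nu|_{\ext}\|_{H^{-3/2}}$, $a = \|h_1\|_{H^{3/2}}$, $b = \|h_2\|_{H^{1/2}}$, this reads $M^2 \le C(bA + aB + ab)$.

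Next, I would propagate $M$ outward. Interior elliptic regularity applied to $\Delta E + (\omega^2 - i\omega)E = 0$ in the lossy layer gives $\|(E\times\nu, H\times\nu)|_{\inte}\|_{L^2(\partial B_1)} \le C\omega M$; the jumps convert this to exterior $L^2$ traces, and the Stratton-Chu representation in the exterior (with $|G_k|$ bounded by $|x-y|^{-1}$ independently of $k$, and $|\nabla G_k|$ at worst linear in $k \le \omega$) yields $\|E\|_{L^2(B_2\setminus B_1)} \le C \omega^{\alpha} (M + a + b)$ for a small power $\alpha$. Combined with the Lemma \ref{lem-es} bound, this gives $A,\, \omega B \le C \omega^{2+\alpha} (M + a + b)$; feeding this back into $M^2 \le C(bA + aB + ab)$ and absorbing $M$ on the right via Young's inequality closes the loop. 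The main technical obstacle is the precise bookkeeping of $\omega$-exponents through these three steps (dual-trace via Lemma \ref{lem-es}, interior regularity in the lossy layer, Stratton-Chu propagation), since the target powers $\omega^4\|h_2\|_{H^{1/2}}$ and $\omega^3\|h_1\|_{H^{3/2}}$ are tight and dictate the particular choice of Sobolev indices used in the duality pairings above.
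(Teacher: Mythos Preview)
Your route applies Lemma~\ref{lem-es} in the \emph{exterior} annulus $B_2\setminus B_1$ at the effective frequency $k=\omega\rho$, and then tries to bound $\|E\|_{L^2(B_2\setminus B_1)}$ by pushing information outward from the lossy layer via $L^2$ traces and Stratton--Chu. The paper instead applies Lemma~\ref{lem-es} in the \emph{lossy layer} $B_1\setminus B_{1/2}$ itself, with frequency $\omega$ and source $f=E$ (the conduction term). That single application gives
\[
\|E|_{\inte}\|_{H^{-1/2}(\partial B_1)} + \omega\,\|H\times\nu|_{\inte}\|_{H^{-3/2}(\partial B_1)} \le C\omega^2 M,
\]
and these weak interior traces are exactly what the energy identity $M^2 \le b\,\|E|_{\inte}\|_{H^{-1/2}} + a\,\|H\times\nu|_{\inte}\|_{H^{-3/2}} + \text{l.o.t.}$ needs, so the loop closes immediately: $M\le C(\omega^2 b + \omega a)$. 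One more use of Lemma~\ref{lem-es} and the jump conditions finish. No exterior $L^2$ estimate, no Stratton--Chu propagation, no extra $\omega^\alpha$.

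The genuine gap in your argument is the step ``interior elliptic regularity applied to $\Delta E + (\omega^2 - i\omega)E = 0$ in the lossy layer gives $\|(E\times\nu, H\times\nu)|_{\inte}\|_{L^2(\partial B_1)} \le C\omega M$.'' That is not interior regularity: $\partial B_1$ is the boundary of the layer, and $L^2$ tangential traces on $\partial B_1$ are \emph{not} controlled by the $L^2$ volume norm alone with $\omega$-independent constants. Getting such $L^2$ boundary control is precisely the obstruction that kills the multiplier approach (Lemma~\ref{lem-mor}, Corollary~\ref{cor-mor}) in the very-high-frequency regime, since that route requires a priori control of the normal components $E\cdot\nu$, $H\cdot\nu$ on $\partial B_1$, which is unavailable here. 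The paper's point is that one never needs the strong $L^2$ traces: the duality pairings in the energy identity only require $H^{-1/2}$ and $H^{-3/2}$ traces, and Lemma~\ref{lem-es} applied \emph{inside} the lossy layer delivers exactly those in terms of $M$. Once you see that, the whole outward-propagation detour disappears.
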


\begin{proof} As in \eqref{lem-lossy-low-e5.1}, we have 
 \begin{equation*}
	\int_{B_1\setminus B_{1/2}}|E|^2\,dx  \le   \left|\int_{\partial B_1} h_{2} \cdot \bar E |_{\ext} - \bar h_{1} H |_{\inte} \, ds\right|.
\end{equation*}
This implies 
 \begin{multline}\label{lem-high-due-e1}
	 \int_{B_1\setminus B_{1/2}}|E|^2\,dx  \le  \| h_2 \|_{H^{1/2}(\partial B_1)}     \| E |_{\inte}\|_{H^{-1/2}(\partial B_1)} \\[6pt]
	 +  \| h_1 \|_{H^{3/2}(\partial B_1)}     \| H \times \nu |_{\inte}\|_{H^{-3/2}(\partial B_1)} + \| h_2\|_{L^2(\partial B_1)}^2.
\end{multline}
Applying  Lemma \ref{lem-es} to $(E, H)$ with  $f = E$ in $B_1\setminus B_{1/2}$,  we have
	\[
	 \| E |_{\inte}\|_{H^{-1/2}(\partial B_1)}  + \omega \|H \times\nu\|_{H^{-3/2}(\partial B_{1})} \leq C\omega^2 \|E\|_{L^2(B_1\setminus B_{1/2})}.
	\]
It follows  from \eqref{lem-high-due-e1} that
	\begin{equation*}
	\|E\|_{L^2(B_1\setminus B_{1/2})}\leq C \Big( \omega^2 \|h_2\|_{H^{1/2}(\partial B_1)} + \omega \|h_1\|_{H^{3/2}(\partial B_1)} \Big).
	\end{equation*}
Applying   Lemma \ref{lem-es}  to $(E, H)$ with  $f = E$ in $B_1\setminus B_{1/2}$ again, one has 
\[
\|E\times\nu|_{\inte}\|_{H^{-1/2}(\partial B_1)} + \omega\|H\times\nu|_{\inte}\|_{H^{-3/2}(\partial B_1)} \leq C \Big( \omega^4 \|h_2\|_{H^{1/2}(\partial B_1)} + \omega^3 \|h_1\|_{H^{3/2}(\partial B_1)} \Big).\]
Using the transmission condition at $\partial B_1$, one reaches the conclusion. 
\end{proof}

We end this subsection by a simple consequence of Stratton-Chu's formula. 

\begin{lemma}\label{lem-FF}Let $0 < \rho < 1$,  $\omega  > \omega_1 > 0$ be such that   $\omega \rho > \omega_1$, and let $D \subset B_1$. Assume that  $(E, H) \in \big[H_{\loc}(\curl, \mR^3 \setminus D) \big]^2$ is a  radiating solution to the Maxwell equations
	\[
	\left\{\begin{array}{cl}
	\nabla\times E = i \omega \rho H & \mbox{ in }  \R^3\setminus \bar{D}, \\[6pt] 
	\nabla\times H = -i \omega \rho E & \mbox{ in } \R^3\setminus \bar{D}.
	\end{array}\right.
	\]
	We have
	\begin{equation*}
	|E(x)| \le \frac{C |\omega \rho|^{3/2}}{|x|}  \|E \times \nu \|_{H^{-1/2}(\partial D)}  +   \frac{C |\omega \rho|^{5/2}}{|x|}   \| H \times \nu\|_{H^{-3/2}(\partial D)} \mbox{ for } x \in B_{3/ \rho} \setminus B_{1/ \rho}, 
	\end{equation*}
for some positive constant $C$ independent of $x$, $\omega$, and $\rho$. 
\end{lemma}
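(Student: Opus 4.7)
The plan is to derive the bound directly from the Stratton-Chu integral representation of $E$ on $\R^3 \setminus \bar D$, combined with a duality pairing and interpolation estimates on the outgoing Helmholtz kernel. Write $k := \omega \rho$ and note that $k > \omega_1$ by hypothesis. For $x \in \R^3 \setminus \bar D$, since $(E,H)$ is a radiating solution of the Maxwell system with wavenumber $k$, the tangential form of Stratton-Chu's representation gives
\begin{equation*}
E(x) = \nabla_x \times \int_{\partial D} G_k(x,y)(\nu \times E)(y)\,ds(y) - \frac{1}{ik}\nabla_x \times \nabla_x \times \int_{\partial D} G_k(x,y)(\nu \times H)(y)\,ds(y) =: I_1(x) + I_2(x),
\end{equation*}
where $G_k(x,y) = e^{ik|x-y|}/(4\pi|x-y|)$. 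Only the tangential traces enter: any normal-trace term can be eliminated via surface integration by parts together with $\dive_{\partial D}(\nu \times H) = \nu \cdot \nabla \times H = -ik\,\nu \cdot E$.

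For $x$ in the annulus $B_{3/\rho} \setminus B_{1/\rho}$ and $y \in \partial D \subset B_1$, one has $|x-y| \sim |x|$ and $k|x-y| \gtrsim \omega_1$; in this regime both scales $k$ and $1/|x-y|$ present in a derivative of $G_k$ are $\lesssim k$. Direct computation therefore yields, for every multi-index $\alpha$,
\begin{equation*}
|\partial_y^\alpha G_k(x,y)| \le C\,k^{|\alpha|}/|x|, \quad |\partial_y^\alpha \nabla_x G_k(x,y)| \le C\,k^{|\alpha|+1}/|x|, \quad |\partial_y^\alpha \nabla_x \nabla_x G_k(x,y)| \le C\,k^{|\alpha|+2}/|x|.
\end{equation*}
Interpolation on $\partial D$ then gives in particular $\|\nabla_x G_k(x,\cdot)\|_{H^{1/2}(\partial D)} \le Ck^{3/2}/|x|$, $\|G_k(x,\cdot)\|_{H^{3/2}(\partial D)} \le Ck^{3/2}/|x|$, and $\|\nabla_x\nabla_x G_k(x,\cdot)\|_{H^{3/2}(\partial D)} \le Ck^{7/2}/|x|$.

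For $I_1$, bringing the curl under the integral gives $I_1(x) = \int_{\partial D} \nabla_x G_k(x,y) \times (\nu \times E)(y)\,ds(y)$, so the duality between $H^{1/2}(\partial D)$ and $H^{-1/2}(\partial D)$ produces $|I_1(x)| \le Ck^{3/2}/|x|\cdot \|E \times \nu\|_{H^{-1/2}(\partial D)}$. For $I_2$, the pointwise identity $\nabla_x \times \nabla_x \times (G_k\,\phi) = k^2 G_k\,\phi + (\phi \cdot \nabla_x)\nabla_x G_k$ (valid for $\phi=\phi(y)$ at $x\neq y$, since $-\Delta_x G_k = k^2 G_k$) splits $I_2$ into a term proportional to $k\int G_k (\nu\times H)\,ds$ and one proportional to $k^{-1}\int \nabla_x\nabla_x G_k\,(\nu\times H)\,ds$. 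Pairing in $H^{\pm 3/2}(\partial D)$ and using the kernel bounds above, both pieces are controlled by $Ck^{5/2}/|x|\cdot \|H \times \nu\|_{H^{-3/2}(\partial D)}$. Summing yields the announced bound with $k=\omega\rho$.

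The main technical point is to verify the Sobolev-norm bounds for $\nabla_x\nabla_x G_k(x,\cdot)$ on $\partial D$, in particular that after dividing by $k$ the contribution to $I_2$ is only $k^{5/2}/|x|$ and not larger. This is where the far-field relation $k|x-y| \gtrsim 1$ is crucial, since it ensures that each $y$-derivative of $G_k$ generates only a factor of $k$ and not of $|x-y|^{-1}$; any residual argument about justifying the tangential-only form of Stratton-Chu enters only through the surface identity for $\dive_{\partial D}(\nu\times H)$ recalled above and does not affect the final exponents.
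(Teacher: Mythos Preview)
Your argument is correct and is exactly the route the paper has in mind: the authors state the lemma as ``a simple consequence of Stratton-Chu's formula'' without further details, and you have supplied precisely those details by writing the tangential Stratton--Chu representation, exploiting the far-field relation $k|x-y|\gtrsim 1$ (valid since $k=\omega\rho>\omega_1$ and $|x-y|\sim|x|\sim 1/\rho$) to bound each $y$-derivative of $G_k$ by a factor of $k$, and then interpolating to obtain the $H^{1/2}$ and $H^{3/2}$ kernel norms that pair with $\|E\times\nu\|_{H^{-1/2}}$ and $\|H\times\nu\|_{H^{-3/2}}$. The split $\nabla_x\times\nabla_x\times(G_k\phi)=k^2G_k\phi+(\nabla_x^2G_k)\phi$ and the resulting exponent count $k\cdot k^{3/2}=k^{-1}\cdot k^{7/2}=k^{5/2}$ for the $H$-term are both correct.
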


\subsubsection{Proof of Proposition \ref{pro-high}.} Apply Lemma \ref{lem-high-due}, we have
\begin{multline}\label{pro-high-e1}
\|\tbE_{\rho}  \times \nu \|_{H^{-1/2}(B_{2}\setminus B_1)} + \omega  \|\tbH_{\rho} \times \nu \|_{H^{-3/2}(B_{2}\setminus B_1)}  \\[6pt]
\leq  C \omega^3 \| \EEE(\rho\, \cdot) \times \nu \|_{H^{3/2}(\partial B_1)} + C \omega^4 \| \HHH(\rho\, \cdot) \times \nu \|_{H^{1/2}(\partial B_1)}.
\end{multline}
Since $\omega>\omega_0$, which is  large, by \eqref{STST}, one has
\begin{equation}\label{pro-high-e2}
 \omega^3 \| \EEE(\rho\, \cdot) \times \nu \|_{H^{3/2}(\partial B_1)} +  \omega^4 \| \HHH(\rho\, \cdot) \times \nu \|_{H^{1/2}(\partial B_1)} \leq C  \omega^{6} \rho^{1/2} \|\JJJ\|_{L^2(\R^3)}.
\end{equation}
Applying Lemma~\ref{lem-FF}, we derive from \eqref{pro-high-e1} and \eqref{pro-high-e2} that
\begin{equation*}
\|\tbE_{\rho}\|_{L^2(B_3 \setminus B_{1/2})}  \le C \omega^{15/2} \rho^{3}  \| \JJJ \|_{L^2(\R^3)}, 
\end{equation*}
which yields 
\begin{equation*}
\|\tbH_{\rho}\|_{L^2(B_2 \setminus B_1)}  \le C \omega^{17/2} \rho^{3}  \| \JJJ \|_{L^2(\R^3)}. 
\end{equation*}
The proof is complete. 
\qed

\section{Proof of Theorem \ref{thm-main-1}}\label{sc-proof-main}

To implement the analysis in the frequency domain, let us introduce the notation for the Fourier transform with respect to $t$: 
\begin{equation}\label{def-F}
\hat u(\omega, x) = \frac{1}{\sqrt{2 \pi}} \int_{\mR} u(t, x) e^{i \omega t} \, dt,  
\end{equation}
for an appropriate function $u \in L^\infty_{\loc}([0, + \infty), L^2(\mR^3))$; here we extend $u$ by $0$ for $t < 0$. 

\medskip 
The starting point of the frequency analysis is based on  the following result:

\begin{proposition}\label{prop-radiating}  Let 
$f_e, f_m \in L^2 \big([0, \infty); [L^2(\R^3)]^3 \big)\cap L^1 \big([0, \infty); [L^2(\R^3)]^3 \big)$. Let \\$(\EE, \HH)$
			$\in L^{\infty}_{\loc} \big([0, +\infty), [L^2(\R^3)]^6 \big)$ be the unique weak solution of \eqref{equ}.  
Assume that there exists $R_0 > 0$ such that $\supp f_e(t, \cdot), \,  \supp f_m(t, \cdot), \,  \supp \sigma_e, \,  \supp \sigma_m \subset B_{R_0}$ for $t > 0$. 
						Then,  for almost every $\omega > 0$, $(\hat \EE, \hat \HH)(\omega, .) \in  [H_{\loc}(\curl, \R^3)]^2$ is the unique, {\bf radiating} solution to the system
			\begin{equation}\label{eq-freq-0}
			\begin{cases}
			\nabla\times \hat \EE(\omega, .) = i\omega\mu \hat\HH(\omega, .) - \sigma_m \hat \HH(\omega, \cdot) + \hat f_e(\omega, \cdot) &\text{ in } \R^3,\\[6pt]
			\nabla\times \hat \HH(\omega, .) = -i\omega\eps \hat \EE(\omega, .)+ \sigma_e \hat \EE(\omega, .) - \hat f_m (\omega, .) &\text{ in }  \R^3.
			\end{cases}
			\end{equation}
		\end{proposition}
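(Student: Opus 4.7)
The plan is to prove (a) that for a.e.\ $\omega > 0$, $(\hat\EE, \hat\HH)(\omega, \cdot)$ is well-defined as an element of $[L^2_{\loc}(\R^3)]^6$, (b) that it satisfies \eqref{eq-freq-0} (which automatically promotes it to $[H_{\loc}(\curl, \R^3)]^2$), and (c) that it satisfies the Silver--M\"uller radiation condition.

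For (a)--(b), I would exploit the $T$-\emph{independence} of the constant in \eqref{es} (the point emphasized in the remark following Proposition~\ref{prop-wellposed}): combined with $f_e, f_m \in L^1(\R_+; [L^2(\R^3)]^3)$, this yields $(\EE, \HH) \in L^\infty(\R_+; [L^2(\R^3)]^6)$ with $\|(\EE, \HH)\|_{L^\infty_t L^2_x} \le C\|(f_e, f_m)\|_{L^1_t L^2_x}$. Extending by zero to $t < 0$ gives a tempered distribution in $t$ with values in $L^2_x$. To pass to a classical Fourier transform I would introduce the damped fields $(\EE_\eta, \HH_\eta) := e^{-\eta t}(\EE, \HH)$ for $\eta > 0$, which lie in $L^1(\R_t; L^2_x)$ and whose temporal Fourier transforms coincide with $(\hat\EE, \hat\HH)(\omega + i\eta, \cdot)$. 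Testing the weak identity \eqref{def-e1} against $\phi(t)(E, H)$ with $\phi \in C^\infty_c(\R)$ and $(E, H) \in [H(\curl, \R^3)]^2$, and using the vanishing of $(\EE, \HH)$ at $t = 0$ to eliminate boundary contributions after extension by zero, one then takes Fourier transform in $t$ to obtain the weak form of \eqref{eq-freq-0} at the complex frequency $\omega + i\eta$. Passing $\eta \to 0^+$ recovers \eqref{eq-freq-0} weakly at real $\omega$ for a.e.\ $\omega > 0$, and the equation itself then gives $\nabla \times \hat\EE, \nabla \times \hat\HH \in L^2_{\loc}(\R^3)$.

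For (c), the key input is finite speed of propagation for the time-domain system. Since the initial data vanish and the sources $f_e, f_m$ as well as the active coefficients $\sigma_e, \sigma_m$ are supported in $B_{R_0}$, and since outside $B_{R_0}$ the material parameters are homogeneous (the setting relevant to Theorem~\ref{thm-main-1}), one has $(\EE, \HH)(t, x) = 0$ for $|x| \ge R_0 + t$. Fixing $R_1 > R_0$, in $\R^3 \setminus \bar B_{R_1}$ the field satisfies the free Maxwell system and may be represented via the retarded Kirchhoff / Stratton--Chu formula in terms of its traces on $\partial B_{R_1}$, with retardation kernel $\delta(t - |x-y|)/(4\pi|x-y|)$ and its spatial derivatives. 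Taking Fourier transform in $t$ converts this kernel into the outgoing Helmholtz kernel $G_\omega(x, y) = e^{i\omega|x-y|}/(4\pi|x-y|)$, yielding an explicit frequency-domain integral representation of $(\hat\EE, \hat\HH)(\omega, \cdot)$ outside $B_{R_1}$. The Silver--M\"uller asymptotics of $G_\omega$ and $\nabla G_\omega$ at infinity then transfer directly to $(\hat\EE, \hat\HH)$, giving \eqref{SM-condition}. Uniqueness of the radiating solution at real $\omega$ then follows from the standard Rellich uniqueness theorem applied in $\R^3 \setminus \bar B_{R_0}$.

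The principal obstacle will be step (c): rigorously justifying the temporal Fourier transform of the Kirchhoff representation and controlling the resulting boundary densities in a topology compatible with Fourier inversion. Here the $T$-independent character of \eqref{es} is indispensable, since it bounds the interior energy, and hence the $\partial B_{R_1}$-traces, uniformly in $t$, preventing any growth in time that would destroy temperedness and block the representation formula at the frequency-domain level.
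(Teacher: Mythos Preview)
Your route is sound in outline but genuinely different from the paper's, particularly in how the radiation condition is obtained. The paper does \emph{not} invoke finite speed of propagation or any time-domain Stratton--Chu/Kirchhoff representation. Instead, it introduces an auxiliary time-domain system with uniform artificial damping, adding $-\delta\EE_\delta$ and $-\delta\HH_\delta$ to the right-hand sides. An energy estimate places $(\EE_\delta, \HH_\delta)$ in $L^2(\R_+; [L^2(\R^3)]^3)$, so Parseval gives a classical Fourier transform, and $(\hat\EE_\delta, \hat\HH_\delta)(\omega, \cdot)$ is recognized as the unique $[L^2(\R^3)]^6$ solution of the frequency-domain system with absorption $\sigma_e + \delta$, $\sigma_m + \delta$. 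The radiation condition then comes in one stroke from the \emph{limiting absorption principle} (cited from \cite{HM1}): as $\delta \to 0$, these $L^2$ solutions converge weakly in $H_{\loc}(\curl)$ to the unique radiating solution $(\EE_0, \HH_0)(\omega, \cdot)$. A separate, elementary argument---applying the $T$-independent bound of Proposition~\ref{prop-wellposed} to $(\EE_\delta - \EE, \HH_\delta - \HH)$, which yields $\|\EE_\delta(t) - \EE(t)\|_{L^2} \le C\delta t$---shows $(\hat\EE_\delta, \hat\HH_\delta) \to (\hat\EE, \hat\HH)$ distributionally on $\R_+ \times \R^3$, and uniqueness of limits identifies $(\hat\EE, \hat\HH)$ with $(\EE_0, \HH_0)$.

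The trade-off: the paper outsources the radiation condition entirely to the limiting absorption principle as a black box, thereby avoiding the trace-regularity and Fourier-inversion issues you correctly identify as the principal obstacle in your step (c). Your approach makes the causal origin of the outgoing condition transparent (retarded kernel $\mapsto$ outgoing Green function under Fourier transform), but turning that heuristic into a proof requires controlling the boundary densities on $\partial B_{R_1}$ in a topology strong enough to pass to the $\eta \to 0^+$ limit, which is real work the paper simply sidesteps. Note also that your passage $\eta \to 0^+$ in step (b) is itself a complex-frequency form of limiting absorption, so you may end up needing essentially the same analytic input the paper invokes, just packaged differently.
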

		
\begin{proof}
Let $(\EE_{\delta}, \HH_{\delta})\in L_{\loc}^{\infty} \big([0, \infty), [L^2(\R^3)]^6 \big)$ be the unique weak solution to 
\begin{equation*}
\begin{cases}
\dsp  \eps \frac{\partial \EE_{\delta}}{\partial t} = \nabla\times \HH_{\delta} -  \sigma_e\EE_{\delta} - \delta \EE_{\delta} + f_m &\text{ in } (0, +\infty)\times \R^3,\\[6pt]
\dsp\mu \frac{\partial \HH_{\delta}}{\partial t} = -  \nabla\times \EE_{\delta}  -\sigma_m\HH_{\delta} - \delta \HH_{\delta}  + f_e&\text{ in } (0, +\infty)\times \R^3,\\[6pt]
\dsp \EE_{\delta}(0, ) = 0; \HH_{\delta}(0, ) = 0 & \text{ in } \R^3. 
\end{cases}
\end{equation*}
By the  standard Galerkin approach (see e.g., \cite{HM4}), one can prove that 
	\begin{equation*}
	\delta \int_{0}^{+\infty} \int_{\R^3}|\EE_{\delta}(s, x)|^2+ |\HH_{\delta}(s, x)|^2\, dx\, ds \leq C \| (f_e, f_m) \|^2_{L^2(\mR_+, \mR^3)},
	\end{equation*}
	for some positive constant independent of $\delta$ and $(f_e, f_m)$. Hence  $\EE_{\delta}, \HH_{\delta}\in L^2 \big((0, \infty); [L^2(\R^3)]^3 \big)$, and thus $\hat \EE_{\delta}, \hat \HH_{\delta}\in L^2 \big((0, \infty); [L^2(\R^3)]^3 \big)$ by Parserval's theorem. It follows, for a.e.  $\omega > 0$, that $(\hat \EE_{\delta}, \hat \HH_{\delta})\in H(\curl, \R^3)$ is the unique solution to
\begin{equation}\label{eq-freq}
\begin{cases}
\nabla\times \hat \EE_{\delta}(\omega, .) = i\omega\mu \hat\HH_{\delta}(\omega, .) - (\sigma_m + \delta)\hat \HH_{\delta}(\omega, .) + \hat f_e(\omega, \cdot)  &\text{ in } \R^3,\\[6pt]
\nabla\times \hat \HH_{\delta}(\omega, .) = -i\omega\eps \hat \EE_{\delta}(\omega, .)+ (\sigma_e+ \delta) \hat \EE_{\delta}(\omega, .) - \hat f_m (\omega, .)  &\text{ in }  \R^3.
\end{cases}
\end{equation}
For $0<\omega_1 < \omega < \omega_2 < \infty$, one can check that the solution of \eqref{eq-freq} satisfies
\begin{equation}\label{es-freq-1}
\|(\hat \EE_{\delta}, \hat \HH_{\delta})(\omega, .)\|_{H(\curl, B_R)}\leq C \|(\hat f_e, \hat f_m)(\omega, .)\|_{L^2(\R^3)}  \le C  \|(f_e, f_m)\|_{L^1((0, \infty), L^2(\R^3))}, 
\end{equation}
for some positive constant $C$ depending only on $\eps,	 \mu$, $R$, $\omega_1$,  and $\omega_2$.  Letting $\delta\to 0$ and using the limiting absorption principle, see e.g.,  \cite[(2.28) and the following paragraph]{HM1},  one derives that 
\begin{equation}\label{es-freq-2}
(\hat \EE_{\delta}, \hat \HH_{\delta})(\omega, )\rightharpoonup (\EE_0, \HH_0)(\omega, .) \mbox{ weakly in } [H_{\loc}(\curl, \R^3)]^2 \mbox{ as } \delta \to 0, 
\end{equation}
where $(\EE_0, \HH_0)(\omega, .)\in [H_{\loc}(\curl, \R^3)]^2$ is the unique, radiating solution to the system
\begin{equation*}
			\begin{cases}
			\nabla\times \EE_0(\omega, .) = i\omega\mu \HH_0(\omega, .) - \sigma_m \HH_0 + \hat f_e(\omega, \cdot) &\text{ in } \R^3,\\[6pt]
			\nabla\times \HH_0(\omega, .) = -i\omega\eps \EE_0(\omega, .) + \sigma_e \EE_0(\omega, .) - \hat{f}_m(\omega, .) &\text{ in }  \R^3.
			\end{cases}
\end{equation*}
From \eqref{es-freq-1} and \eqref{es-freq-2}, we have
\begin{equation}\label{prop-1-p1}
(\hat \EE_{\delta}, \hat \HH_{\delta}) \to (\EE_0, \HH_0) \mbox{ in the distributional sense in }  \R_+\times \R^3 \mbox{ as } \delta \to 0.
\end{equation}
We claim that 
\begin{equation}\label{prop-1-p2}
(\hat \EE_{\delta}, \hat \HH_{\delta}) \to (\hat \EE, \hat \HH) \mbox{ in the distributional sense in }  \R_+\times \R^3, 
\end{equation}
and the conclusion follows from \eqref{prop-1-p1} and \eqref{prop-1-p2}. 

It remains to prove \eqref{prop-1-p2}. Let $\phi\in [C^{\infty}_{c} \big((0, \infty)\times \R^3)\big]^3$. We have
\begin{equation*}
\int_{\R}\int_{\R^3}(\hat \EE_{\delta}(\omega, x) - \hat \EE(\omega, x))\bar \phi(\omega, x)\, dx d\omega = \int_{\R}\int_{\R^3}(\EE_{\delta}(t, x) -  \EE(t, x))\bar{\check \phi}(t, x)\, dx dt.
\end{equation*}
We derive that, by applying Proposition \ref{prop-wellposed} to $(\EE_{\delta} - \EE, \HH_{\delta} - \HH)$, 
\begin{equation*}
\|\EE_{\delta}(t, .) - \EE(t, .)\|_{L^2(\R^3)} \leq C \delta \int_{0}^t\|(\EE(s, .), \HH(s, .))\|_{L^2(\R^3)}\, ds \mbox{ for } t > 0, 
\end{equation*}
and,  by applying Proposition \ref{prop-wellposed} for $(\EE, \HH)$, 
\[
\|(\EE(t, .), \HH(t, .))\|_{L^2(\R^3)} \leq C\|(f_e, f_m)\|_{L^1((0, \infty), [L^2(\R^3)]^6)} \mbox{ for }  t> 0.
\]
It follows that  
\begin{equation}\label{prop-radiating-e1}
\|\EE_{\delta}(t, .) - \EE(t, .)\|_{L^2(\R^3)} \leq C\delta t. 
\end{equation}
From \eqref{prop-radiating-e1}, we obtain
\begin{equation}\label{prop-radiating-e2}
\int_{\R}\int_{\R^3}(\EE_{\delta}(t, x) -  \EE(t, x))\bar{\check \phi}(t, x)\, dx dt\leq C\delta\int_{\R}t\|\check\phi(t, .)\|_{L^2(\R^3)}\, dt.
\end{equation}
From \eqref{prop-radiating-e2} and the fast decay property of $\check \phi$, we derive that  
\[
\hat \EE_{\delta} \to \hat \EE \mbox{ in the distributional sense in }  \R_+\times \R^3.
\]
Similarly, one can prove that 
\[
\hat \HH_{\delta} \to \hat \HH \mbox{ in the distributional sense in }  \R_+\times \R^3.
\]
The proof is complete.
\end{proof}

We are ready to give 

\medskip 

\noindent{\bf Proof of Theorem~\ref{thm-main-1}.}   Fix $K \subset\subset \R^3\setminus \bar B_1$ and $T> 0$.  Using the fact that $\hat \EE_c(-k, x) =\overline{ \hat \EE_c}(k, x)$ and $\hat \EE(-k, x) =\overline{ \hat \EE}(k, x)$ for $k>0$, one has, for $0< t< T$,
\begin{equation}\label{thm-e3-3}
\|\EE_c(t, \cdot) - \EE(t, \cdot)\|_{L^2(K)}  \le \int_{0}^T \| \partial_t \EE_c(t, \cdot) - \partial_t\EE(t, \cdot)\|_{L^2(K)}   \le T \int_{0}^{\infty} \omega \|\hat \EE_c (\omega, \cdot)  - \hat\EE (\omega, \cdot) \|_{L^2(K)}d\omega.
\end{equation}
We have, by  Proposition \ref{pro-low},  
	\begin{equation}\label{thm-e0.2}
	 \int\limits_{0}^{1} \omega \|\hat \EE_c(\omega, .) - \hat \EE(\omega, .)\|_{L^2(K)} d\omega \leq C\int\limits_{0}^{1}\rho^3\| \hat{\mathcal J}(\omega, .)\|_{L^2(\R^3)}d\omega  \le  C\rho^3\|\mathcal J\|^2_{L^2(\R; L^2(\R^3))}, 
	\end{equation}
by  Proposition \ref{prop-av} (here to  simplify the notations we assume that $\omega_0 = 1$), 
	 \begin{equation}\label{thm-e0.3}
	 \int\limits_{1}^{1/\rho} \omega \|\hat \EE_c(\omega, .) - \hat \EE(\omega, .)\|_{L^2(K)} d\omega \leq C\rho^3 \int\limits_{1}^{1/ \rho} \omega^4  \|\hat{\mathcal J}(\omega, .)\|_{L^2(\R^3)}d\omega, 
	 \end{equation}
and,  by Proposition~\ref{pro-high}, 
 \begin{equation}\label{thm-e0.4}
	 \int\limits_{1/ \rho}^{+\infty} \omega \|\hat \EE_c(\omega, .) - \hat \EE(\omega, .)\|_{L^2(K)} d\omega \leq C\rho^3 \int\limits_{\frac{1}{\rho}}^{+\infty} \omega^{19/2}  \|\hat{\mathcal J}(\omega, .)\|_{L^2(\R^3)}d\omega. 
	 \end{equation}
A combination of \eqref{thm-e0.3} and \eqref{thm-e0.4} yields 
 \begin{align}\label{thm-e0.6}
 \int\limits_{1}^{\infty} \omega \|\hat \EE_c(\omega, .) - \hat \EE(\omega, .)\|_{L^2(K)} d\omega &\le C\rho^3 \int_1^{+\infty}\frac{1}{\omega}\|\widehat{\partial_t^{(11)}\mathcal J}(\omega, \cdot)\|_{L^2(\R^3))}\, d\omega\\
  \nonumber&\leq C\rho^3\|\mathcal J\|_{H^{11}(\R, L^2(\R^3))}
 \end{align}
We derive from  \eqref{thm-e3-3}, \eqref{thm-e0.2}, and \eqref{thm-e0.6} that, for $0< t< T$, 
\begin{equation*}
\|\EE_c(t, \cdot) - \EE(t, \cdot)\|_{L^2(K)}  \leq C T \rho^3\|\mathcal J\|_{H^{11}(\R, L^2(\R^3))}.
\end{equation*}
The proof is complete. \qed

\end{document}